\newtheorem{theorem}{Theorem}[section]
\newtheorem{corollary}[theorem]{Corollary}
\newtheorem{example}[theorem]{Example}
\newtheorem{lemma}[theorem]{Lemma}
\newtheorem{remark}[theorem]{Remark}
\newenvironment{proof}[1][Proof]{\noindent\textbf{#1.} }{\ \rule{0.5em}{0.5em}}
\begin{document}

\bigskip

\bigskip 
\begin{frontmatter}

\title{Weighted and unweighted enrichment strategies for solving the Poisson problem with Dirichlet boundary conditions}

\author[address-CS]{Francesco Dell'Accio}
\ead{francesco.dellaccio@unical.it}

\author[address-Messina,PELO]{Luca Desiderio}
\ead{ldesiderio@unime.it}

\author[address-Pau]{Allal Guessab}
\ead{guessaballal7@gmail.com}

\author[address-CS]{Federico Nudo\corref{corrauthor}}\cortext[corrauthor]{Corresponding author}
\ead{federico.nudo@unical.it}

\address[address-CS]{Department of Mathematics and Computer Science,\\ University of Calabria, Rende (CS), Italy}

 \address[address-Messina]{Department of Mathematical and Computer Sciences, Physical Sciences and Earth Sciences,\\ University of Messina, Italy}

\address[PELO]{Member of Accademia Peloritana dei Pericolanti, Messina, Italy.}

\address[address-Pau]{Avenue Al Walae, 73000, Dakhla, Morocco}

\begin{abstract}
In this paper, we propose weighted and unweighted enrichment strategies to enhance the accuracy of the linear lagrangian finite element for solving the Poisson problem with Dirichlet boundary conditions. We first recall key examples of admissible enrichment functions, specifically designed to overcome the limitations of the linear lagrangian finite element in capturing solution features such as sharp gradients and boundary-layer phenomena. We then introduce two novel three-parameter families of weighted enrichment functions and derive an explicit error bound in $L^2$-norm.  Numerical experiments confirm the effectiveness of the proposed approach in improving approximation accuracy, demonstrating its potential for a wide range of applications.
\end{abstract}

\begin{keyword}
Weighted enriched finite element method\sep Poisson problem\sep linear finite element \sep error bound
\end{keyword}

\end{frontmatter}

\section{Introduction}
The finite element method (FEM) is a cornerstone of numerical analysis, widely used for numerically solving partial differential equations (PDEs) that arise in fields such as engineering, physics and applied mathematics. Its widespread adoption stems from its versatility in handling complex geometries and boundary conditions, making it a powerful tool for modeling various physical phenomena. Among fundamental PDEs, the Poisson equation plays a central role, as it describes key processes in areas such as heat conduction, electrostatics and fluid flow. Consequently, the development of accurate and efficient numerical methods for solving the Poisson problem remains a topic of significant interest~\cite{lin2005superconvergence, agbezuge2006finite, vartziotis2013improving, zhang2016robustness, wang2016efficient}.

\noindent
In the finite element framework, as well as in its modern extensions, such as the virtual element method and the discontinuous Galerkin method, which has been recently coupled with boundary element methods, see~\cite{desiderio2022CVEM-BEM, mascotto2020fem, desiderio2021virtual, desiderio2022coupling, erath2022mortar, desiderio2023cvem},
the computational domain is partitioned into a set of polytopes, and the solution of the PDE is locally approximated within each element. The global approximation is a piecewise function obtained by assembling these local approximations. If the global approximation exhibits discontinuities at the interfaces between elements, the finite element is said to be \textit{nonconforming}; otherwise, it is classified as \textit{conforming}~\cite{Shi:2013:NQF, Fix:1973:OTU}.  One of the simplest and most commonly used finite element in two dimensions is the linear lagrangian finite element,  which locally approximates the solution of the PDE using the unique linear polynomial that interpolates the solution at the vertices of the triangle. However, despite its simplicity and computational efficiency, this approach often struggles to achieve high accuracy for challenging problems, particularly when the solution exhibits wild oscillations or requires high resolution. To overcome these limitations, enrichment strategies have been developed to improve the approximation capabilities of finite elements. These approaches employ additional functions and functionals, referred to as \textit{enrichment functions} and \textit{enriched linear functionals}, respectively~\cite{Guessab2016AADM, Guessab2016RM, Guessab:2017:AUA, DellAccio:2022:AUE, DellAccio2023AGC, DellAccio:2022:ESF, DellAccio:2022:QFA, DellAccio2023nuovo, DellAccioCANWA, nudosolo, nudosolo2}. These enrichment strategies improve the approximation capability within each subdomain, thereby improving the overall accuracy of the finite element solution.

\noindent
In this paper, we build upon the general enrichment strategies introduced in~\cite{DellAccio2023AGC} and apply them to the numerical solution of the Poisson problem with Dirichlet boundary conditions. Our main contribution is the introduction of two novel three-parameter families of weighted enrichment functions, which offers enhanced adaptability and flexibility compared to existing methods. The use of weighted functions in finite element methods has proven effective in various applications~\cite{hammad2020exponential, rukavishnikov2021weighted} and has recently been adopted in the enrichment process for function reconstruction~\cite{nudo5082357general, DellAccio2025new} to improve both flexibility and accuracy in approximation.

\noindent
The paper is organized as follows. In Section~\ref{sec2}, we introduce the strong and weak formulations of the Poisson problem with Dirichlet boundary conditions, establishing the mathematical framework for our analysis. In Section~\ref{sec3}, we recall the general enrichment strategies introduced in~\cite{DellAccio2023AGC}, detailing the construction of the enriched finite element space and the selection of appropriate enrichment functions. Moreover, we introduce two novel three-parameter families of weighted enrichment functions and, for one of these families, we derive an explicit error bound in $L^2$-norm. Finally, in Section~\ref{sec4}, we perform numerical experiments that validate the effectiveness of the proposed approach by comparing it with the linear lagrangian finite element.

\section{Strong and weak forms of Laplace problem}
\label{sec2}
In the affine space associated with $\mathbf{R}^{2}$ and equipped with a fixed orthonormal Cartesian coordinates system $\mathbf{x}=\left(x_{1},x_{2}\right)^{\top}$, let $\Omega$ be an open bounded domain admitting a Lipschitz continuous boundary $\Gamma=\partial\Omega$. We consider the following Boundary Value Problem (BVP) for the Poisson equation
\begin{subequations}\label{dirichlet_problem}
  \begin{empheq}[left=\empheqlbrace]{align}
    \label{dirichlet_problem_1} &-\Delta u(\textbf{x})=f(\textbf{x})& &\textbf{x}\in \Omega\\
    \label{dirichlet_problem_2} &u(\textbf{x})=0& 	 					 &\textbf{x}\in\Gamma
  \end{empheq}
\end{subequations}
\noindent
where $u$ is a scalar unknown function, while $f$ is a given source term. In problem~\eqref{dirichlet_problem}, a homogeneous Dirichlet type boundary condition is chosen for the sake of simplicity, but the case of different and more general boundary data can be handled. Before deriving the weak formulation of problem~\eqref{dirichlet_problem}, we denote by $V$ the subset of the Sobolev space $\text{H}^{1}(\Omega)$ that contains the functions vanishing on $\Gamma$, i.e.
\begin{displaymath}
V:=\text{H}^{1}_{0}(\Omega)=\left\{v\in\text{H}^{1}(\Omega) \, | \,  v_{|_{\Gamma}}=0\right\}.
\end{displaymath}
\noindent
Assuming that $f\in\text{L}^{2}(\Omega)$, we call weak solution of the BVP \eqref{dirichlet_problem} a function $u\in V$ which satisfies the integral identity
\begin{equation}\label{weak_problem}
\int\limits_{\Omega}\nabla u(\mathbf{x})\nabla v(\mathbf{x}) d \mathbf{x}=\int\limits_{\Omega}f(\mathbf{x})v(\mathbf{x}) d \mathbf{x}, \quad \forall v\in V.
\end{equation}
\noindent
The left hand side of the above equation defines the bilinear form 
\begin{equation}\label{bilinear_form_definition}
a(u,v)=\int\limits_{\Omega}\nabla u(\mathbf{x})\nabla v(\mathbf{x})d \mathbf{x}, \quad \forall u,v\in V
\end{equation}
\noindent
while the right hand side introduces the linear functional
\begin{equation}\label{functional_definition}
F(v)=\int\limits_{\Omega}f(\mathbf{x})v(\mathbf{x})d \mathbf{x}, \quad \forall v\in V.
\end{equation}
\noindent
We point out that in \eqref{weak_problem} the Dirichlet boundary condition \eqref{dirichlet_problem_2} is incorporated in the definition of the functional space $V$. Furthermore, we remark that the exact solution of~\eqref{weak_problem} exists and it is unique. This result is a consequence of the $\alpha$-coercivity, with $\alpha=\left(1+C_{\Omega}^{2}\right)^{-1}>0$ (where $C_{\Omega}$ is the Poincaré constant) and $\gamma$-continuity, with $\gamma=1>0$, of the bilinear form $a(\cdot,\cdot)$ and the Lax–Milgram lemma (see~\cite{Ciarlet:2002:TFE}). Additionally, it holds
\begin{equation}
\|u\|_{V}\leq\|F\|_{V^{\star}},
\end{equation}
\noindent
where $V^{\star}$ denotes the dual space of $V$.

\section{Enriched finite element formulation}\label{sec3}
\noindent
In this section, we propose a new enriched finite element Galerkin approach applied to the weak form of the continuous problem in equation \eqref{weak_problem}. Since the starting point of our methodology is the standard FEM, we briefly synthesize its building steps. Once the domain $\Omega$ has been decomposed into a polygonal mesh $\mathcal{T}_{h}(\Omega)$, for any $E\in \mathcal{T}_{h}(\Omega)$, we consider the Ciarlet triplet 
$$\left(E,W_{h}(E),\Sigma_{E}\right),$$ where $W_{h}(E)=\operatorname{span}\left\{\lambda_1^{E},\dots,\lambda_n^{E}\right\}$ is a local space of real-valued functions defined on $E$, and $\Sigma_{E}=\left\{\mathcal{L}_1,\dots,\mathcal{L}_n\right\}$ is a local set of linear functionals on $W_{h}(E)$ such that 
$W_h(E)$ is $\Sigma_E$-unisolvent. This means that for any $i=1,\dots,n$, there exists a unique element $w_{h,i}\in W_h$ satisfying
\begin{equation*}
\mathcal{L}_j\left(w_{h,i}\right)=\delta_{ij},
\end{equation*}
where $\delta_{ij}$ is the Kronecker delta function.
Thus, the Ciarlet triplet uniquely identifies the degrees of freedom associated with $E$. Based on this so-called finite element, we introduce the global space $W_{h}\subset V$ by gluing the local spaces over all the mesh elements, i.e. 
\begin{displaymath}
W_{h}:=\left\{w_{h}\in V \, | \, w_{h_{\mkern 1mu \vrule height 2ex\mkern2mu E}}\in W_{h}(E) \quad \forall E\in\mathcal{T}_{h}(\Omega)\right\}.
\end{displaymath}
\noindent
At this stage, from the weak form in \eqref{weak_problem}, we can formulate the finite element Galerkin problem, which consists of finding $u_{h}\in W_{h}$ such that
\begin{equation}\label{galerkin_weak_problem}
a\left(u_{h},v_{h}\right)=F\left(v_{h}\right), \quad \forall v_{h}\in W_{h}\subset V.
\end{equation}
\noindent
Denoting by $N$ the total number of degrees of freedom of the finite element space $W_{h}$, i.e. its dimension, we introduce its basis functions $\left\{\phi_{1},\phi_{2},\ldots,\phi_{N}\right\}$. Expanding the trial function $u_{h}\in W_{h}$ and the test function $v_{h}\in W_{h}$ in terms of the basis functions, we substitute them into~\eqref{galerkin_weak_problem}, in order to obtain the following linear system 
\begin{equation}\label{linear_system}
\mathbb{K}\boldsymbol\alpha=\mathbf{f} \quad \text{with} \quad \mathbb{K}_{ij}=a\left(\phi_{i},\phi_{j}\right) \quad \text{and} \quad f_{i}=F\left(\phi_{i}\right) \quad i,j=1,\ldots,N,
\end{equation}
\noindent
where $\boldsymbol\alpha\in\mathbf{R}^{N}$ is the unknown coefficient vector, $\mathbb{K}\in\mathbf{R}^{N\times N}$ is the stiffness matrix and $\mathbf{f}\in\mathbf{R}^{N}$ is the external force (load) vector. Then, by solving the linear system~\eqref{linear_system}, we obtain an approximation of the exact solution within the global finite element space $W_{h}$. However, it is well established in the FEM literature that, if the exact solution of the BVP~\eqref{dirichlet_problem} contains complex features, such as singularities, oscillations, exponential decay or boundary-layer phenomena, the accuracy of the standard FEM approaches could be compromised. For this reason, it is beneficial to enrich the Ciarlet triplet with additional enrichment functions, which carry information about the behavior of the exact solution and are introduced through the inclusion of enriched linear functionals.
\subsection{Mesh definition and regularity assumptions}
\noindent
Before detailing the enrichment process, we present such mesh regularity assumptions to characterize the geometry of the elements of $\mathcal{T}_{h}(\Omega)$. In particular, throughout this paper, we consider unstructured meshes, that cover the domain $\Omega$ with a finite number $M_{h}$ of nonoverlapping triangular elements. For each $E\in\mathcal{T}_{h}(\Omega)$, we denote its area by $\left\lvert E\right \rvert$ and its diameter by
\begin{equation*}
h_{{E}}:=\sup_{\mathbf{x},\mathbf{y}\in E}\left\lVert\mathbf{x}-\mathbf{y}\right\rVert,    
\end{equation*}
and we set 
\begin{equation*}
    h=\max_{E\in \mathcal{T}_h(\Omega)} h_E,
\end{equation*}
where $\left\|{\cdot}\right\|$ is the standard Euclidean norm in $\mathbf{R}^2$.  Moreover, we let $\varrho_{{E}}$
represent the diameter of the circle inscribed in $E$, also referred to as the \textit{sphericity} of $E$. In order to immediately exclude very deformed (i.e. stretched) triangles, we require that $\mathcal{T}_{h}(\Omega)$ is regular in the sense that for a suitable $\delta>0$ it holds
\begin{displaymath}
\frac{h_{{E}}}{\varrho_{{E}}}\leq\delta, \quad \forall E\in\mathcal{T}_{h}(\Omega).
\end{displaymath}
\noindent
For convenience, we assume that the vertices of each triangular element $E\in\mathcal{T}_{h}(\Omega)$ are labeled in counter-clockwise order and they are denoted by 
\begin{equation*}
\mathbf{v}_{i}^{E}=\left(v_{i,1}^{E},v_{i,2}^{E}\right)^{\top}, \quad   i=1,2,3.  
\end{equation*}
Furthermore, we use the notation $\mathbf{e}_{i}^{E}$ to indicate the edge of $E$ opposite to the vertex $\mathbf{v}_{i}^{E}$, $i=1,2,3$. Thus, the lengths of the sides of $E$ are defined as
\begin{displaymath}
  \left\lvert \mathbf{e}_1^{E}\right\rvert =\left\| {\mathbf{v}}_{2}^{E}-{\mathbf{v}}_{3}^{E} \right\|, \quad
\left\lvert \mathbf{e}_2^{E}\right\rvert = \left\|{\mathbf{v}}_{1}^{E}-{\mathbf{v}}_{3}^{E}\right\|,  \quad
\left\lvert \mathbf{e}_3^{E}\right\rvert = \left\|{\mathbf{v}}_{1}^{E}-{\mathbf{v}}_{2}^{E}\right\|.
\end{displaymath}
\subsection{The enrichment process}
\noindent
For any triangle $E\in\mathcal{T}_h(\Omega)$, we consider a finite element 
\begin{equation*}
\mathcal{S}=\left(E,W_{h}(E),\Sigma_{E}\right)    
\end{equation*}
and an enrichment finite element 
\begin{equation*}
\widetilde{\mathcal{S}}=\left(E,\widetilde{W}_{h}(E),\widetilde{\Sigma}_{E}\right),
\end{equation*} 
where 
\begin{equation*}
    \Sigma_E=\left\{\mathcal{L}_r:W_{h}(E)\rightarrow\mathbf{R} \, | \, r=1,\dots,n\right\}, \quad  \widetilde{\Sigma}_E=\left\{\widetilde{\mathcal{L}}_s:\widetilde{W}_{h}(E)\rightarrow\mathbf{R} \, | \, s=1,\dots,\widetilde{n}\right\}
\end{equation*}
with 
\begin{equation*}
n:=\text{dim}\left(W_{h}(E)\right), \quad \widetilde{n}:=\text{dim}\left(\widetilde{W}_{h}(E)\right), \quad W_{h}(E)\cap\widetilde{W}_{h}(E)=\left\{\mathbf{0}\right\}.  
\end{equation*}
 We introduce the sets of linearly independent functions $\left\{\lambda^{E}_{1},\lambda^{E}_{2},\ldots,\lambda^{E}_{n}\right\}$ and $\left\{\widetilde{\lambda}^{E}_{1},\widetilde{\lambda}^{E}_{2},\ldots,\widetilde{\lambda}^{E}_{\widetilde{n}}\right\}$ satisfying
 \begin{equation*}
W_{h}(E)=\operatorname{span}\left\{\lambda^{E}_{1},\lambda^{E}_{2},\ldots,\lambda^{E}_{n}\right\}, \quad \widetilde{W}_{h}(E)=\operatorname{span}\left\{\widetilde{\lambda}^{E}_{1},\widetilde{\lambda}^{E}_{2},\ldots,\widetilde{\lambda}^{E}_{\widetilde{n}}\right\}
\end{equation*}
and
\begin{align*}
&\mathcal{L}_{r}\left(\lambda_{i}^{E}\right)=\delta_{ir} \quad \text{with} \quad \mathcal{L}_{r}\in\Sigma_{E} \quad \forall i,r=1,\ldots,n,\\
&\widetilde{\mathcal{L}}_{s}\left(\widetilde{\lambda}_{j}^{E}\right)=\delta_{js} \quad \text{with} \quad \widetilde{\mathcal{L}}_{s}\in\widetilde{\Sigma}_{E} \quad \forall j,s=1,\ldots,\widetilde{n}.
\end{align*}
The existence of these bases is guaranteed by the fact that $W_h(E)$ is $\Sigma_E$-unisolvent and $\widetilde{W}_h(E)$ is $\widetilde{\Sigma}_E$-unisolvent. 

\noindent
 The above relationships allow a unique identification of the coefficients of the elements of the spaces $W_{h}(E)$ and $\widetilde{W}_{h}(E)$. In particular, any function $w_{h}\in W_{h}(E)$ and  $\widetilde{w}_{h}\in \widetilde{W}_{h}(E)$ can be uniquely represented with respect to the chosen bases as
\begin{align*}
&w_{h}(\mathbf{x})=\sum\limits_{i=1}^{n}w_{h,i}\lambda_{i}^{E}(\mathbf{x}) \quad \text{with} \quad w_{h,i}=\mathcal{L}_{i}\left(w_{h}\right) \quad \forall i=1,\ldots,n,\\
&\widetilde{w}_{h}(\mathbf{x})=\sum\limits_{j=1}^{\widetilde{n}}\widetilde{w}_{h,j}\widetilde{\lambda}_{j}^{E}(\mathbf{x}) \quad \text{with} \quad \widetilde{w}_{h,j}=\widetilde{\mathcal{L}}_{j}\left(\widetilde{w}_{h}\right) \quad \forall j=1,\ldots,\widetilde{n}.
\end{align*}
\noindent
These coefficients are referred to as the \textit{degrees of freedom} of the finite elements $\mathcal{S}$ and $\widetilde{\mathcal{S}}$, respectively. Due to the hypothesis made on the spaces $W_{h}(E)$ and $\widetilde{W}_{h}(E)$, we build the enriched space
\begin{equation}\label{local_enriched_space}
V_{h}(E):=W_{h}(E)\bigoplus\widetilde{W}_{h}(E) \quad \text{with} \quad m:=\text{dim}\left(V_{h}(E)\right)=n+\widetilde{n}.
\end{equation}
\noindent
Furthermore, requiring that the linear functionals of $\Sigma_{E}$ and $\widetilde{\Sigma}_{E}$ admit an extension over $V_{h}(E)$, we define the new set of linear functionals
\begin{equation*}
\Xi_E := \left\{\mathcal{L}_r, \widetilde{\mathcal{L}}_s:V_{h}(E)\rightarrow\mathbf{R} \, | \, r=1,\dots,n, \, s=1,\dots,\widetilde{n}\right\}.
\end{equation*}
\noindent
Although we can obtain a basis of the enriched space $V_h(E)$ as the union of the basis functions of $W_{h}(E)$ and $\widetilde{W}_{h}(E)$, it is more convenient to consider an alternative basis, made of functions $\varphi_{1}^{E},\varphi_{2}^{E},\ldots,\varphi_{m}^{E}$, satisfying the conditions
\begin{equation}\label{admissible_enrichment}
\mathcal{L}_{r}\left(\varphi_{i}^{E}\right)=\delta_{ir}, \quad \widetilde{\mathcal{L}}_{s}\left(\varphi_{i}^{E}\right)=0, \quad \mathcal{L}_{r}\left(\varphi_{j+n}^{E}\right)=0, \quad \widetilde{\mathcal{L}}_{s}\left(\varphi_{j+n}^{E}\right)=\delta_{js}, 
\end{equation}
$i,r=1,\dots,n$, and  $j,s=1,\dots,\widetilde{n}$.  The basis functions satisfying~\eqref{admissible_enrichment} are referred to as \textit{admissible enriched basis functions} relative to $V_{h}(E)$ and $\Xi_E$. If such a basis exists, the functions $\widetilde{\lambda}_1^E,\dots,\widetilde{\lambda}_{\widetilde{n}}^{E}$ are called \textit{admissible enrichment functions} relative to $V_h(E)$ and $\Xi_E$. Under these hypotheses, we can define the admissible enriched Ciarlet triplet 
$\mathcal{F}=\left(E,V_{h}(E),\Xi_E\right)$ and the global enriched space 
\begin{displaymath}
V_{h}:=\left\{v_{h}\in\text{H}_{0}^{1}(\Omega) \, | \, v_{h_{\mkern 1mu \vrule height 2ex\mkern2mu E}}\in V_{h}(E) \quad \forall E\in\mathcal{T}_{h}(\Omega)\right\}\subset V.
\end{displaymath}
\noindent
The derivation of the final linear system follows the general construction of the standard FEM. For this reason, one can easily write an enriched FEM code, starting from a standard one. Indeed, the biggest difference lies in the basis functions and, consequently, in the computation of local matrices needed to assemble the global matrix.
\section{General enrichment strategies of the linear lagrangian finite element} 
\noindent
In the following, we are interested in the enrichment of the linear lagrangian finite element. To this end, for each $E\in\mathcal{T}_{h}(\Omega)$, we consider the space of polynomials of degree less than or equal to $1$ on $E$, i.e.
\begin{displaymath}
W_{h}(E):=P^{1}(E) \quad \text{with} \quad n=\text{dim}\left(W_{h}(E)\right)=3.
\end{displaymath}
\noindent
A basis for this space can be constructed from the set of the \textit{barycentric coordinates} of the triangle $E$, i.e. 
\begin{eqnarray} \label{barcord}
\lambda_{1}^{E}({\mathbf{
x}})&:=&\frac{\left(\mathbf{x}-\mathbf{v}_{3}^{E}\right)\times\left(\mathbf{v}_{2}^{E}-\mathbf{v}_{3}^{E}\right)}{\left(\mathbf{v}_{1}^{E}-\mathbf{v}_{3}^{E}\right)\times\left(\mathbf{v}_{2}^{E}-\mathbf{v}_{3}^{E}\right)}, \notag
\\ 
\lambda_{2}^{E}({\mathbf{
x}})&:=&\frac{\left(\mathbf{x}-\mathbf{v}_{3}^{E}\right)\times\left(\mathbf{v}_{3}^{E}-\mathbf{v}_{1}^{E}\right)}{\left(\mathbf{v}_{1}^{E}-\mathbf{v}_{3}^{E}\right)\times\left(\mathbf{v}_{2}^{E}-\mathbf{v}_{3}^{E}\right)},\\ \notag \lambda_{3}^{E}({\mathbf{x}})&:=&\frac{\left(\mathbf{x}-\mathbf{v}_{1}^{E}\right)\times\left(\mathbf{v}_{1}^{E}-\mathbf{v}_{2}^{E}\right)}{\left(\mathbf{v}_{1}^{E}-\mathbf{v}_{3}^{E}\right)\times\left(\mathbf{v}_{2}^{E}-\mathbf{v}_{3}^{E}\right)}.
\end{eqnarray}
\noindent
These functions satisfy the partition of unity property, i.e.
\begin{equation}\label{partition_of_unity}
\sum\limits_{i=1}^{3}\lambda_{i}^{E}(\mathbf{x})=1, \quad \forall\mathbf{x}\in E,
\end{equation}
\noindent
and the Kronecker delta property, i.e.
\begin{equation}\label{deltaprop}
\lambda_{i}^{E}\left(\mathbf{v}^{E}_j\right)=\delta_{ij}.
\end{equation}
\noindent 
Moreover, since the following identity holds
\begin{equation}\label{prop21}
\lambda_{i}^{E}(t \mathbf{x} + (1-t) \mathbf{y}) = t \lambda_{i}^{E}(\mathbf{x}) + (1-t)\lambda_{i}^{E}(\mathbf{y}),  \quad \forall\mathbf{x}, \mathbf{y}\in E, \quad \forall t\in[0,1],
\end{equation}
\noindent
we deduce that the barycentric coordinate $\lambda_{i}^{E}$ vanishes on the edge $\mathbf{e}_{i}^{E}$, $i=1,2,3$. Introducing the set
\begin{displaymath}
\Sigma_{E}:=\left\{\mathcal{L}_{r}:W_{h}(E)\rightarrow\mathbf{R} \, | \, \mathcal{L}_{r}\left(w_{h}\right)=w_{h}\left(\mathbf{v}^E_{r}\right), \ r=1,2,3\right\},
\end{displaymath}
\noindent
the space $W_{h}(E)$ is $\Sigma_{E}$-unisolvent, since
\begin{equation*}
\mathcal{L}_{r}\left(\lambda_{i}^{E}\right):=\lambda_{i}^{E}\left(\mathbf{v}^E_{r}\right)=\delta_{ir}, \quad r=1,2,3. 
\end{equation*}
\noindent
As a consequence, any function $w_{h}\in W_{h}(E)$ is uniquely determined by its values at the vertices of $E$, which provide its degrees of freedom, i.e.
\begin{displaymath}
w_{h,r}=\mathcal{L}_{r}\left(w_{h}\right)=w_{h}\left(\mathbf{v}_{r}^{E}\right), \quad r=1,2,3.
\end{displaymath}
\noindent
Then, we can define the projection operator $\pi_{h}^{\mathrm{lin}}:C^{0}(E)\rightarrow W_{h}(E)$ such that
\begin{displaymath}
\pi_{h}^{\mathrm{lin}}[g](\mathbf{x})=\sum\limits_{r=1}^{3}\mathcal{L}_r(g) \lambda_{r}^{E}(\mathbf{x}), \quad g\in C^{0}(E)
\end{displaymath}
\noindent
and the associated approximation error 
\begin{equation}\label{eq:errorinterp}
\varepsilon_{h}^{\mathrm{lin}}[g](\mathbf{x}):=g(\mathbf{x})-\pi_{h}^{\mathrm{lin}}[g](\mathbf{x}), \quad g\in C^{0}(E).
\end{equation}
\noindent
Since we are interested in developing an enrichment strategy for the standard linear lagrangian finite element $\mathcal{S}=\left(E,W_{h}(E),\Sigma_{E}\right)$, we consider three linearly independent enrichment functions $\widetilde{\lambda}_{1}^{E}$, $\widetilde{\lambda}_{2}^{E}$,  $\widetilde{\lambda}_{3}^{E}$ and we define the space
\begin{displaymath}
\widetilde{W}_{h}(E):=\operatorname{span} \left\{\widetilde{\lambda}_{1}^{E}, \widetilde{\lambda}_{2}^{E}, \widetilde{\lambda}_{3}^{E} \right\} \quad \text{with} \quad \widetilde{n}=\text{dim}\left(\widetilde{W}_{h}(E)\right)=3.
\end{displaymath}
\noindent
If the functions $\widetilde{\lambda}_{1}^{E}$, $\widetilde{\lambda}_{2}^{E}$,  $\widetilde{\lambda}_{3}^{E}$ are chosen in such a way that $W_{h}(E)\cap\widetilde{W}_{h}(E)=\left\{\mathbf{0}\right\}$, we have all the ingredients to build the local enriched space
\begin{displaymath}
V_{h}(E):=W_{h}(E)\bigoplus\widetilde{W}_{h}(E) \quad \text{with} \quad m:=\text{dim}\left(V_{h}\right)=6.
\end{displaymath}
\noindent
Then, we introduce the enriched linear functionals $\widetilde{\mathcal{L}}_{j}:\widetilde{W}_{h}(E)\rightarrow\mathbf{R}$ such that
\begin{equation}\label{enrichment_linear_functionals}
\widetilde{\mathcal{L}}_j(w):=\frac{1}{ \left\lvert \mathbf{e}_j^{E}\right\rvert}\int_{ \mathbf{e}_j^{E}} w(\mathbf{x}) d \mathbf{x}, \quad j=1,2,3.
\end{equation}
\noindent 
Assuming that all the standard functionals $\mathcal{L}_{1},\mathcal{L}_{2},\mathcal{L}_{3}$ and the enriched ones $\widetilde{\mathcal{L}}_{1},\widetilde{\mathcal{L}}_{2},\widetilde{\mathcal{L}}_{3}$ can be extended to the space $V_{h}(E)$, we collect them into the set
\begin{equation}\label{newXiE}
\Xi_{E}:=\left\{\mathcal{L}_{i}, \widetilde{\mathcal{L}}_{j}:V_{h}(E)\rightarrow\mathbf{R} \, | \, i,j=1,2,3 \right\}. 
\end{equation}
\noindent
We now recall a characterization result, whose proof can be found in~\cite{DellAccio2023AGC}, that establishes the conditions on the enrichment functions and the enriched linear functionals under which the enriched Ciarlet triple $\mathcal{F}:=\left(E,V_{h}(E),\Xi_{E}\right)$ is a finite element. To this aim, we consider the matrix
\begin{equation}\label{dnn}
\mathbb{G}= \begin{pmatrix}
\widetilde{\mathcal{G}}_1\left(\widetilde{\lambda}_1^{E}\right) & \widetilde{\mathcal{G}}_1\left(\widetilde{\lambda}_2^{E}\right)&\widetilde{\mathcal{G}}_1\left(\widetilde{\lambda}_3^{E}\right) \\
\widetilde{\mathcal{G}}_2\left(\widetilde{\lambda}_1^{E}\right) & \widetilde{\mathcal{G}}_2\left(\widetilde{\lambda}_2^{E}\right)&\widetilde{\mathcal{G}}_2\left(\widetilde{\lambda}_3^{E}\right)\\
\widetilde{\mathcal{G}}_3\left(\widetilde{\lambda}_1^{E}\right) & \widetilde{\mathcal{G}}_3\left(\widetilde{\lambda}_2^{E}\right)&\widetilde{\mathcal{G}}_3\left(\widetilde{\lambda}_3^{E}\right)
\end{pmatrix},
\end{equation}
\noindent
where the new linear functionals $\widetilde{\mathcal{G}}_{j}:V_{h}(E)\rightarrow\mathbf{R}$ are defined by
\begin{equation}\label{errf}
 \widetilde{\mathcal{G}}_{j}\left(v_{h}\right):= \widetilde{\mathcal{L}}_j\left(v_{h}\right)-\sum_{i=1}^{3} \widetilde{\mathcal{L}}_j\left(\lambda_i^{E}\right)\mathcal{L}_i\left(v_{h}\right), 
\quad j=1,2,3.
\end{equation}
\begin{theorem}\label{th3}
Let $\widetilde{\lambda}_{1}^{E},\widetilde{\lambda}_{2}^{E},\widetilde{\lambda}_{3}^{E}$ be enrichment functions. Then the following conditions are equivalent:
\begin{itemize}
    \item [1)] The enriched Ciarlet triple $\mathcal{F}=\left(E,V_{h}(E),\Xi_{E}\right)$ is a finite element;
    \item[2)] The matrix $\mathbb{G}$ is such that $\operatorname{Ker}(\mathbb{G})=\{0\}$.
\end{itemize}
\end{theorem}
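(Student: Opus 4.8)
The plan is to translate the finite-element (unisolvence) property of $\mathcal{F}$ into the invertibility of a single $3\times 3$ matrix, by exploiting the block structure that the normalization $\mathcal{L}_r(\lambda_i^E)=\delta_{ri}$ induces. Since $W_h(E)\cap\widetilde{W}_h(E)=\{\mathbf 0\}$, the six functions $\lambda_1^E,\lambda_2^E,\lambda_3^E,\widetilde{\lambda}_1^E,\widetilde{\lambda}_2^E,\widetilde{\lambda}_3^E$ form a basis of $V_h(E)$, and because $\dim V_h(E)=\#\Xi_E=6$, saying that $\mathcal{F}$ is a finite element is equivalent to saying that $V_h(E)$ is $\Xi_E$-unisolvent, i.e. the only $v_h\in V_h(E)$ annihilated by every functional of $\Xi_E$ is $v_h=\mathbf 0$. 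This is the reformulation I would aim for.

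First I would expand an arbitrary element as $v_h=\sum_{i=1}^3 a_i\lambda_i^E+\sum_{j=1}^3 b_j\widetilde{\lambda}_j^E$ and impose the vanishing of all functionals. Using $\mathcal{L}_r(\lambda_i^E)=\delta_{ri}$, the three conditions $\mathcal{L}_r(v_h)=0$ solve explicitly for the polynomial coefficients, giving $a_r=-\sum_{j=1}^3 b_j\,\mathcal{L}_r(\widetilde{\lambda}_j^E)$ for $r=1,2,3$. The point of this step is that the $a_r$ are completely determined by the $b_j$, so the whole $6\times 6$ system collapses onto the three enrichment coefficients.

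Next I would substitute these expressions into the remaining conditions $\widetilde{\mathcal{L}}_s(v_h)=0$. After collecting the terms multiplying each $b_j$, the coefficient of $b_j$ in the $s$-th equation is precisely $\widetilde{\mathcal{L}}_s(\widetilde{\lambda}_j^E)-\sum_{i=1}^3\widetilde{\mathcal{L}}_s(\lambda_i^E)\,\mathcal{L}_i(\widetilde{\lambda}_j^E)=\widetilde{\mathcal{G}}_s(\widetilde{\lambda}_j^E)=\mathbb{G}_{sj}$, by the definition \eqref{errf} of $\widetilde{\mathcal{G}}_s$. Hence the reduced system reads $\mathbb{G}\,\mathbf b=\mathbf 0$ with $\mathbf b=(b_1,b_2,b_3)^\top$, and the $a_r$ are recovered from it. Equivalently, one may observe that the matrix whose $(k,\ell)$ entry is the $k$-th functional of $\Xi_E$ applied to the $\ell$-th basis function has the block form $\left(\begin{smallmatrix} I_3 & B\\ C & D\end{smallmatrix}\right)$, and that $\mathbb{G}$ is exactly its Schur complement $D-CB$, so its determinant equals $\det\mathbb{G}$; I would likely mention this determinant viewpoint as a remark.

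Finally I would read off the equivalence. If $\operatorname{Ker}(\mathbb{G})=\{\mathbf 0\}$, then $\mathbb{G}\,\mathbf b=\mathbf 0$ forces $\mathbf b=\mathbf 0$, whence every $a_r=0$ and $v_h=\mathbf 0$, so $\mathcal{F}$ is a finite element. Conversely, if some $\mathbf b\neq\mathbf 0$ lies in $\operatorname{Ker}(\mathbb{G})$, setting $a_r=-\sum_{j=1}^3 b_j\,\mathcal{L}_r(\widetilde{\lambda}_j^E)$ produces a $v_h$ annihilated by all of $\Xi_E$; since $\{\lambda_i^E\}\cup\{\widetilde{\lambda}_j^E\}$ is a basis and the coefficients are not all zero, $v_h\neq\mathbf 0$, so unisolvence fails. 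The argument is essentially bookkeeping, and the only step demanding care is this last one: I must invoke the direct-sum hypothesis $W_h(E)\cap\widetilde{W}_h(E)=\{\mathbf 0\}$ to guarantee that a nonzero coefficient vector yields a genuinely nonzero function, which is what converts a nontrivial kernel of $\mathbb{G}$ into an actual failure of unisolvence.
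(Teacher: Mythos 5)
Your argument is correct. The paper itself does not prove Theorem~\ref{th3} --- it explicitly defers the proof to the cited reference --- so there is no in-text proof to compare against; but your reduction is the standard one and every step checks out: using $\mathcal{L}_r(\lambda_i^E)=\delta_{ri}$ to eliminate the polynomial coefficients $a_r$ in favour of the enrichment coefficients $b_j$ collapses the $6\times 6$ unisolvence system exactly onto $\mathbb{G}\mathbf{b}=\mathbf{0}$, with $\mathbb{G}$ appearing as the Schur complement of the identity block, and the converse direction correctly invokes the direct-sum hypothesis $W_h(E)\cap\widetilde{W}_h(E)=\{\mathbf{0}\}$ to turn a nontrivial kernel vector into a genuinely nonzero function killed by all of $\Xi_E$. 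The only implicit ingredient worth flagging is that you use $\mathcal{L}_r(\widetilde{\lambda}_j^E)$ and $\widetilde{\mathcal{L}}_s(\lambda_i^E)$, which requires the extendability of both families of functionals to all of $V_h(E)$; the paper assumes this when defining $\Xi_E$, so no gap results.
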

\begin{remark}\label{remark_impo}
We point out that, if the enrichment functions $\widetilde{\lambda}_1^{E},\widetilde{\lambda}_2^{E},\widetilde{\lambda}_3^{E}$ satisfy the vanishing conditions at the vertices of the triangle $E$, i.e. 
\begin{equation}\label{vancond}
\mathcal{L}_j\left(\widetilde{\lambda}_i^{E}\right)=0, \quad i,j=1,2,3,
\end{equation}
\noindent 
the matrix $\mathbb{G}$, defined in~\eqref{dnn}, becomes
\begin{equation}\label{dnnn}
\mathbb{G}= \begin{pmatrix}
\widetilde{\mathcal{L}}_1\left(\widetilde{\lambda}_1^{E}\right) & \widetilde{\mathcal{L}}_1\left(\widetilde{\lambda}_2^{E}\right)&\widetilde{\mathcal{L}}_1\left(\widetilde{\lambda}_3^{E}\right) \\
\widetilde{\mathcal{L}}_2\left(\widetilde{\lambda}_1^{E}\right) & \widetilde{\mathcal{L}}_2\left(\widetilde{\lambda}_2^{E}\right)&\widetilde{\mathcal{L}}_2\left(\widetilde{\lambda}_3^{E}\right) \\
\widetilde{\mathcal{L}}_3\left(\widetilde{\lambda}_1^{E}\right) & \widetilde{\mathcal{L}}_3\left(\widetilde{\lambda}_2^{E}\right)&\widetilde{\mathcal{L}}_3\left(\widetilde{\lambda}_3^{E}\right)
\end{pmatrix}.
\end{equation}
\noindent
The properties of the enrichment functions satisfying~\eqref{vancond} are discussed in~\cite{DellAccio:2022:AUE}. 
\end{remark}
\noindent
Throughout the paper, we assume that 
$\mathbb{G}$ is invertible and we denote its inverse by
\begin{equation}\label{mfinvvv}
\mathbb{G}^{-1}=\left({\mathbf{g}}_1^{E} \ {\mathbf{g}}_2^{E} \ {\mathbf{g}}_3^{E}\right) \quad \text{with} \quad \mathbf{g}_i^{E}=\left(g_{i,1}^{E},g_{i,2}^{E},g_{i,3}^{E}\right)^{\top}\in\mathbf{R}^3, \quad i=1,2,3.
\end{equation}
\noindent
This matrix is the starting point to derive an analytical expression of the admissible enriched basis functions $\varphi_{1}^E,\varphi_{2}^E,\varphi_{3}^E,\varphi_{4}^E,\varphi_{5}^E,\varphi_{6}^E$ relative to the space $V_{h}(E)$ and the set $\Xi_{E}$, as summarized in the following result (see~\cite{DellAccio2023AGC}).
\begin{theorem}\label{th4}
\noindent
The admissible enriched basis functions relative to the space $V_{h}(E)$ and the set $\Xi_{E}$ have the following expressions
\begin{equation}\label{ggb1g}
    \varphi^{E}_i= \lambda_i^{E}-\frac{1}{2}\sum\limits_{\substack{ %
j=1  \\ j\neq i}}^3\varphi_{j+3}^{E} \quad \text{and} \quad \varphi_{i+3}^{E}= \left\langle \varepsilon_{h}\left[\boldsymbol{\widetilde{\lambda}}^{E}\right],{\mathbf{g} }_i^{E}\right \rangle, \quad i=1,2,3, 
\end{equation}
\noindent
where $\widetilde{{\boldsymbol{\lambda}}}^{E}:=\left(\widetilde{\lambda}_1^{E}, \widetilde{\lambda}_2^{E}, \widetilde{\lambda}_3^{E}\right)^{\top}$ and $\left\langle{\cdot}, {\cdot}\right\rangle$ denotes the standard scalar product in the Euclidean space $\mathbf{R}^2$. 
\end{theorem}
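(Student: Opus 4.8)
The plan is to verify directly that the six functions defined in~\eqref{ggb1g} belong to $V_h(E)$ and satisfy the defining relations~\eqref{admissible_enrichment} of admissible enriched basis functions. Since $\mathbb{G}$ is assumed invertible, Theorem~\ref{th3} guarantees that $\mathcal{F}$ is a finite element, so the admissible enriched basis exists and is unique; hence it suffices to check that the proposed formulas match the conditions in~\eqref{admissible_enrichment}. I would treat the enrichment-type functions $\varphi_{i+3}^{E}$ first and then the polynomial-type functions $\varphi_i^{E}$, since the latter are built from the former.

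For $\varphi_{i+3}^{E}=\left\langle\varepsilon_h[\boldsymbol{\widetilde{\lambda}}^{E}],\mathbf{g}_i^{E}\right\rangle=\sum_{k=1}^{3}g_{i,k}^{E}\,\varepsilon_h[\widetilde{\lambda}_k^{E}]$, I would first note that each $\varepsilon_h[\widetilde{\lambda}_k^{E}]=\widetilde{\lambda}_k^{E}-\pi_h^{\mathrm{lin}}[\widetilde{\lambda}_k^{E}]$ lies in $\widetilde{W}_h(E)+W_h(E)=V_h(E)$, so $\varphi_{i+3}^{E}\in V_h(E)$. The conditions $\mathcal{L}_r(\varphi_{i+3}^{E})=0$ follow from the interpolation property of $\pi_h^{\mathrm{lin}}$: evaluating at a vertex gives $\mathcal{L}_r(\varepsilon_h[g])=g(\mathbf{v}_r^{E})-\pi_h^{\mathrm{lin}}[g](\mathbf{v}_r^{E})=0$ for any $g$. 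For the remaining conditions I would compute $\widetilde{\mathcal{L}}_s(\varepsilon_h[\widetilde{\lambda}_k^{E}])$; expanding $\pi_h^{\mathrm{lin}}[\widetilde{\lambda}_k^{E}]=\sum_i\mathcal{L}_i(\widetilde{\lambda}_k^{E})\lambda_i^{E}$ and comparing with the definition~\eqref{errf} of $\widetilde{\mathcal{G}}_s$ shows that $\widetilde{\mathcal{L}}_s(\varepsilon_h[\widetilde{\lambda}_k^{E}])=\widetilde{\mathcal{G}}_s(\widetilde{\lambda}_k^{E})=\mathbb{G}_{sk}$, the $(s,k)$ entry of the matrix in~\eqref{dnn}. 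Hence $\widetilde{\mathcal{L}}_s(\varphi_{i+3}^{E})=\sum_k\mathbb{G}_{sk}\,g_{i,k}^{E}=(\mathbb{G}\,\mathbb{G}^{-1})_{si}=\delta_{is}$ by~\eqref{mfinvvv}. This is the step I expect to be the crux: the construction is engineered precisely so that the error functionals applied to the enrichment functions reproduce the entries of $\mathbb{G}$, and inverting $\mathbb{G}$ is what converts them into the Kronecker conditions.

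For $\varphi_i^{E}=\lambda_i^{E}-\tfrac12\sum_{j\neq i}\varphi_{j+3}^{E}$, membership in $V_h(E)$ is immediate from the previous step, and the conditions $\mathcal{L}_r(\varphi_i^{E})=\delta_{ir}$ follow at once from~\eqref{deltaprop} together with $\mathcal{L}_r(\varphi_{j+3}^{E})=0$. The conditions $\widetilde{\mathcal{L}}_s(\varphi_i^{E})=0$ require one geometric ingredient: the edge averages of the barycentric coordinates. Using~\eqref{enrichment_linear_functionals}, the fact (from~\eqref{prop21}) that $\lambda_i^{E}$ vanishes on $\mathbf{e}_i^{E}$, and that on the opposite edges $\lambda_i^{E}$ interpolates linearly between the vertex values $1$ and $0$, I would establish $\widetilde{\mathcal{L}}_s(\lambda_i^{E})=0$ if $s=i$ and $\widetilde{\mathcal{L}}_s(\lambda_i^{E})=\tfrac12$ if $s\neq i$. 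Substituting $\widetilde{\mathcal{L}}_s(\varphi_{j+3}^{E})=\delta_{js}$ and splitting on whether $s=i$ then yields $\widetilde{\mathcal{L}}_s(\varphi_i^{E})=\widetilde{\mathcal{L}}_s(\lambda_i^{E})-\tfrac12\sum_{j\neq i}\delta_{js}=0$ in both cases, which is exactly why the coefficient $\tfrac12$ appears. Having verified all the relations in~\eqref{admissible_enrichment} and membership in $V_h(E)$, uniqueness of the admissible enriched basis completes the argument.
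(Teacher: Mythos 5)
Your proof is correct and complete. The paper does not actually prove Theorem~\ref{th4} itself (it defers to the cited reference), but your direct verification is exactly the natural argument: the key identity $\widetilde{\mathcal{L}}_s\left(\varepsilon_h\left[\widetilde{\lambda}_k^{E}\right]\right)=\widetilde{\mathcal{G}}_s\left(\widetilde{\lambda}_k^{E}\right)=\mathbb{G}_{sk}$ combined with $\mathbf{g}_i^{E}$ being the $i$-th column of $\mathbb{G}^{-1}$ gives $\widetilde{\mathcal{L}}_s\left(\varphi_{i+3}^{E}\right)=\delta_{is}$, the edge averages $\widetilde{\mathcal{L}}_s\left(\lambda_i^{E}\right)=\tfrac12(1-\delta_{is})$ explain the factor $\tfrac12$, and unisolvence (guaranteed by the standing invertibility of $\mathbb{G}$ via Theorem~\ref{th3}) gives uniqueness of the dual basis, so checking the conditions~\eqref{admissible_enrichment} suffices.
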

\begin{remark}
We highlight that, if we consider enrichment functions which satisfy the vanishing conditions~\eqref{vancond}, then the basis functions~\eqref{ggb1g} become 
\begin{equation}\label{ggwb1g}
\varphi_i^{E}= \lambda_i^{E}-\frac{1}{2}\sum\limits_{\substack{ %
j=1  \\ j\neq i}}^3\varphi_{j+3}^{E} \quad \text{and} \quad \varphi_{i+3}^{E}= \left\langle {\boldsymbol{\widetilde{\lambda}}}^{E},{\mathbf{g}}_i^{E}\right \rangle, \quad  i=1,2,3.
\end{equation}
\end{remark}
\subsection{Weighted and unweighted enrichment functions}
\noindent
In order to provide several examples of admissible enrichment functions $\widetilde{\lambda}_{1}^{E},\widetilde{\lambda}_{2}^{E},\widetilde{\lambda}_{3}^{E}$ relative to $V_h(E)$ and $\Xi_E$, we recall two important results, introduced for the first time in~\cite{DellAccio2023AGC}. 
\begin{theorem} \label{theoremprod}
Let $n\in \mathbf{N}$ and let $f_1^{E},\dots,f_n^{E}\in C^0([0,1])$ be convex, increasing (or decreasing) and nonnegative functions different from zero. If we assume that at least one of these functions is strictly convex, the enrichment functions
\begin{equation} \label{enrfun1type} 
\widetilde{\lambda}_i^{E}(\mathbf{x})=\prod_{k=1}^{n}f^{E}_{k}\left({\lambda}_i^{E}(\mathbf{x})\right), \quad i=1,2,3,
\end{equation}
are admissible.
\end{theorem}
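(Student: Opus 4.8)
The plan is to reduce admissibility, via Theorem~\ref{th3} and Theorem~\ref{th4}, to the invertibility of the matrix $\mathbb{G}$, and then to reduce that invertibility to a single scalar condition decided by convexity. The first step is to observe that the functional $\widetilde{\mathcal{G}}_j$ in~\eqref{errf} is exactly the edge-average of the linear interpolation error, namely $\widetilde{\mathcal{G}}_j(v)=\widetilde{\mathcal{L}}_j\bigl(\varepsilon_h^{\mathrm{lin}}[v]\bigr)$, since $\sum_{i=1}^{3}\mathcal{L}_i(v)\lambda_i^{E}=\pi_h^{\mathrm{lin}}[v]$. I would also record that the two standing requirements on enrichment functions, namely linear independence of $\widetilde\lambda_1^{E},\widetilde\lambda_2^{E},\widetilde\lambda_3^{E}$ and $W_h(E)\cap\widetilde W_h(E)=\{\mathbf 0\}$, both follow once $\mathbb{G}$ is shown invertible: any linear dependence, or any element of the intersection, produces a vector in $\operatorname{Ker}(\mathbb{G})$ (for the intersection one uses that $\varepsilon_h^{\mathrm{lin}}$ annihilates $P^{1}(E)$). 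Hence everything hinges on the invertibility of $\mathbb{G}$.

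Next I would compute $\mathbb{G}$ explicitly, exploiting the edge structure of the barycentric coordinates: on $\mathbf{e}_j^{E}$ one has $\lambda_j^{E}\equiv0$, the remaining two coordinates run linearly from $0$ to $1$, and consequently $\widetilde{\mathcal{L}}_j(\lambda_j^{E})=0$ and $\widetilde{\mathcal{L}}_j(\lambda_i^{E})=\tfrac12$ for $i\neq j$. Writing $g:=\prod_{k=1}^{n}f_k^{E}$, $a:=g(0)$ and $b:=g(1)$, a direct evaluation gives the diagonal entries $\widetilde{\mathcal{G}}_j(\widetilde\lambda_j^{E})=0$, because $\widetilde\lambda_j^{E}=g(\lambda_j^{E})$ equals the constant $a$ on $\mathbf{e}_j^{E}$ and at both of its endpoints, and the off-diagonal entries
\[
\widetilde{\mathcal{G}}_j\bigl(\widetilde\lambda_l^{E}\bigr)=\int_0^1 g(t)\,dt-\frac{a+b}{2}=:\beta,\qquad l\neq j,
\]
since along $\mathbf{e}_j^{E}$ the coordinate $\lambda_l^{E}$ sweeps $[0,1]$ so that $\widetilde\lambda_l^{E}=g(\lambda_l^{E})$. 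Thus $\mathbb{G}=\beta\,(J-I)$ with $J$ the all-ones matrix, whence $\det\mathbb{G}=2\beta^{3}$ and $\mathbb{G}$ is invertible if and only if $\beta\neq0$.

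It remains to prove $\beta\neq0$, which I expect to be the main obstacle. The number $\beta$ is the signed trapezoidal error of $g$ on $[0,1]$, hence nonpositive when $g$ is convex and strictly negative unless $g$ is affine; so I must show that $g=\prod_{k=1}^{n}f_k^{E}$ is convex and not affine. The core is a convexity lemma for products: if $f,h\colon[0,1]\to[0,\infty)$ are convex and monotone in the same sense, then $fh$ is convex, because for $z=tx+(1-t)y$ the bound $f(z)h(z)\le\bigl[tf(x)+(1-t)f(y)\bigr]\bigl[th(x)+(1-t)h(y)\bigr]$ (valid by nonnegativity) combines with the identity reducing the right-hand side to $tf(x)h(x)+(1-t)f(y)h(y)-t(1-t)\bigl[f(x)-f(y)\bigr]\bigl[h(x)-h(y)\bigr]$, whose last product is nonnegative by equal monotonicity. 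Iterating over the $n$ factors gives convexity of $g$, so $\beta\le0$. For strictness I would isolate the strictly convex factor, say $f_1^{E}$, and write $g=f_1^{E}\cdot G$ with $G:=\prod_{k\ge2}f_k^{E}$; on the subinterval where $G>0$, which has positive length because each monotone nonzero $f_k^{E}$ is positive on an interval abutting an endpoint, the first bound above is strict, so $g$ is strictly convex there and cannot be affine on all of $[0,1]$. Therefore $\beta<0$, $\mathbb{G}$ is invertible, and by Theorems~\ref{th3} and~\ref{th4} the enrichment functions $\widetilde\lambda_1^{E},\widetilde\lambda_2^{E},\widetilde\lambda_3^{E}$ are admissible.
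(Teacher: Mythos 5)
The paper itself does not prove Theorem~\ref{theoremprod}: it is recalled from~\cite{DellAccio2023AGC} without proof, so there is no in-paper argument to compare against. Your proof is correct and self-contained, and it follows the same template the authors use for their two new weighted families (reduce admissibility to $\operatorname{Ker}(\mathbb{G})=\{0\}$ via Theorem~\ref{th3}, then compute $\mathbb{G}$ edge by edge), with the extra work that is unavoidable here because $\widetilde{\lambda}_i^{E}=g(\lambda_i^{E})$ with $g=\prod_{k}f_k^{E}$ does \emph{not} vanish at the vertices, so the full $\widetilde{\mathcal{G}}_j$ form of~\eqref{dnn}--\eqref{errf} must be used rather than the simplified matrix~\eqref{dnnn}. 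Your computation $\mathbb{G}=\beta\,(J-I)$, with $J$ the all-ones matrix and $\beta=\int_0^1 g(t)\,dt-\tfrac{1}{2}\left(g(0)+g(1)\right)$, is right, as is $\det\mathbb{G}=2\beta^3$; and the two analytic inputs --- convexity of a product of nonnegative, convex, similarly monotone functions, and the strict trapezoid (Hermite--Hadamard) inequality for a convex non-affine function --- are correctly deployed, with the localization of strict convexity to a subinterval where $\prod_{k\ge 2}f_k^{E}>0$ working because for same-sense monotone continuous factors those positivity sets all abut the same endpoint of $[0,1]$. Two remarks. First, your reading of ``increasing (or decreasing)'' as ``all monotone in the same sense'' is essential and should be stated explicitly: with mixed monotonicity the product-convexity lemma fails (e.g.\ $t$ and $1-t$ give the concave product $t(1-t)$) and the sign argument for $\beta$ would break, so the theorem only makes sense under that interpretation. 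Second, your observation that linear independence of $\widetilde{\lambda}_1^{E},\widetilde{\lambda}_2^{E},\widetilde{\lambda}_3^{E}$ and the direct-sum condition $W_h(E)\cap\widetilde{W}_h(E)=\{\mathbf{0}\}$ follow automatically from $\det\mathbb{G}\neq 0$ (using that $\varepsilon_h^{\mathrm{lin}}$ annihilates $P^1(E)$) is a genuine, if small, strengthening, since the paper treats the direct-sum condition as a standing assumption.
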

\begin{example} \label{rem123}
Due to the above theorem, we can enrich the linear lagrangian finite element to the finite element $\mathcal{F}$ by using the following sets of admissible enrichment functions
    \begin{itemize}
        \item $\mathcal{E}_1=\left\{ \widetilde{\lambda}_i^{E}=\sin\left(\frac{\pi}{2}\left(\lambda_i^{E}+2\right)\right)+2\, |\,  i=1,2,3\right\},$
       \item $\mathcal{E}_2=\left\{ \widetilde{\lambda}_i^{E}=\frac{1}{1+\lambda_i^{E}}\, | \, i=1,2,3\right\},$
       \item $\mathcal{E}_3=\left\{ \widetilde{\lambda}_i^{E}=e^{\lambda_i^{E}} \, | \, i=1,2,3\right\}, $
       \item $\mathcal{E}_4=\left\{ \widetilde{\lambda}_i^{E}= \left(\lambda_i^{E}\right)^{\alpha} \, | \, \alpha>1, \,  i=1,2,3\right\}, $
        \item $\mathcal{E}_5=\left\{ \widetilde{\lambda}_i^{E}=\left(\lambda_i^{E}\right)^{\alpha}e^{\lambda_i^{E}}\, | \, \alpha>1, \, i=1,2,3\right\}. $
    \end{itemize}
\end{example}
\begin{theorem}\label{thlast}
Let $f_1^{E},f_2^{E},f_{3}^{E}\in C^0([0,1])$ be continuous functions such that $f_i^{E}(0)\neq 0$, $i=1,2,3,$ and let $\alpha_1,\alpha_2,\alpha_{3}>1$. Then, the enrichment functions
\begin{equation} \label{eq:secondclassofef}
\widetilde{\lambda}_i^{E}(\mathbf{x})=f_i^{E}\left({\lambda}_i^{E}({\mathbf{x}})\right) \prod_{\substack{k=1\\k\neq i}}^{3}\left({\lambda}^{E}_k({\mathbf{x}})\right)^{\alpha_k-1}, \quad i=1,2,3,
\end{equation}
\noindent
are admissible.
\end{theorem}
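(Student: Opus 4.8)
The plan is to establish admissibility through Theorem~\ref{th3}, i.e. to show that the matrix $\mathbb{G}$ associated with the functions~\eqref{eq:secondclassofef} has trivial kernel. The first step is to verify the vanishing conditions~\eqref{vancond} at the vertices of $E$. Fixing $i$ and evaluating $\widetilde{\lambda}_i^{E}$ at $\mathbf{v}_j^{E}$ via the Kronecker delta property~\eqref{deltaprop}, I would observe that when $j=i$ each factor $\bigl(\lambda_k^{E}(\mathbf{v}_i^{E})\bigr)^{\alpha_k-1}=0^{\alpha_k-1}$ (with $k\neq i$) vanishes because $\alpha_k-1>0$, while when $j\neq i$ the factor corresponding to the third index $l\notin\{i,j\}$ satisfies $\bigl(\lambda_l^{E}(\mathbf{v}_j^{E})\bigr)^{\alpha_l-1}=0^{\alpha_l-1}=0$. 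In every case $\widetilde{\lambda}_i^{E}(\mathbf{v}_j^{E})=0$, so~\eqref{vancond} holds and, by Remark~\ref{remark_impo}, $\mathbb{G}$ reduces to the edge-average form~\eqref{dnnn} with entries $\mathbb{G}_{ji}=\widetilde{\mathcal{L}}_j(\widetilde{\lambda}_i^{E})$.

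The heart of the argument is to show that this $\mathbb{G}$ is diagonal. For an off-diagonal entry ($j\neq i$), I would use that $\lambda_j^{E}$ vanishes identically on the edge $\mathbf{e}_j^{E}$ (the property deduced after~\eqref{prop21}). Since $j\neq i$, the product in~\eqref{eq:secondclassofef} contains the factor $\bigl(\lambda_j^{E}\bigr)^{\alpha_j-1}$, which is identically zero on $\mathbf{e}_j^{E}$ because $\alpha_j-1>0$; as $f_i^{E}$ is bounded, it follows that $\widetilde{\lambda}_i^{E}\equiv 0$ on $\mathbf{e}_j^{E}$, and hence the average~\eqref{enrichment_linear_functionals} gives $\widetilde{\mathcal{L}}_j(\widetilde{\lambda}_i^{E})=0$.

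For a diagonal entry ($j=i$), on $\mathbf{e}_i^{E}$ we have $\lambda_i^{E}\equiv 0$, so $f_i^{E}(\lambda_i^{E})=f_i^{E}(0)\neq 0$ by hypothesis, while the surviving factor $\prod_{k\neq i}\bigl(\lambda_k^{E}\bigr)^{\alpha_k-1}$ involves the two barycentric coordinates attached to the endpoints of $\mathbf{e}_i^{E}$, which lie strictly between $0$ and $1$ on the relative interior of the edge. The integrand is therefore continuous and strictly positive there, so the edge integral is strictly positive and $\mathbb{G}_{ii}=f_i^{E}(0)\,\tfrac{1}{|\mathbf{e}_i^{E}|}\int_{\mathbf{e}_i^{E}}\prod_{k\neq i}\bigl(\lambda_k^{E}\bigr)^{\alpha_k-1}\,d\mathbf{x}\neq 0$. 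Thus $\mathbb{G}$ is diagonal with nonvanishing diagonal, hence $\operatorname{Ker}(\mathbb{G})=\{0\}$, and Theorem~\ref{th3} yields admissibility. As a by-product, the invertibility of $\mathbb{G}$ together with~\eqref{vancond} guarantees that $\widetilde{\lambda}_1^{E},\widetilde{\lambda}_2^{E},\widetilde{\lambda}_3^{E}$ are linearly independent and that $W_h(E)\cap\widetilde{W}_h(E)=\{0\}$ — any common element would be a linear polynomial vanishing at the three vertices, hence zero — so the enriched space $V_h(E)$ is indeed a direct sum.

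The single delicate point is the diagonal entry: proving that the edge integral of $\prod_{k\neq i}\bigl(\lambda_k^{E}\bigr)^{\alpha_k-1}$ does not vanish. This rests only on the positivity of the barycentric coordinates on the interior of $\mathbf{e}_i^{E}$ and on $f_i^{E}(0)\neq 0$; everything else is a direct consequence of the product structure of~\eqref{eq:secondclassofef}, which makes each $\widetilde{\lambda}_i^{E}$ vanish on every edge except $\mathbf{e}_i^{E}$ and thereby renders $\mathbb{G}$ diagonal.
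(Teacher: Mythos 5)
Your proof is correct. The paper itself does not prove Theorem~\ref{thlast} (it is recalled from the cited reference), but your argument follows exactly the template the paper uses for the analogous Theorem~\ref{thmnew}: verify the vanishing conditions~\eqref{vancond} so that $\mathbb{G}$ takes the form~\eqref{dnnn}, then use the fact that $\lambda_j^{E}$ vanishes on $\mathbf{e}_j^{E}$ to kill the off-diagonal entries, and the edge parametrization to identify the diagonal entry as $f_i^{E}(0)$ times a Beta-type integral $\int_0^1 t^{\alpha_{i+1}-1}(1-t)^{\alpha_{i+2}-1}\,dt>0$. Your closing observation that invertibility of $\mathbb{G}$ plus the vanishing conditions also yields the linear independence and $W_h(E)\cap\widetilde{W}_h(E)=\{0\}$ is a worthwhile extra check that the paper leaves implicit.
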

\begin{example}\label{enres1}   
Due to the above theorem, we can  enrich the linear lagrangian finite element to the finite element ${\mathcal{F}}$ by using the following sets of admissible enrichment functions
\begin{itemize}
\item $\mathcal{E}_6=\left\{ \widetilde{\lambda}_i^{E}=\sin\left(\frac{\pi}{2i+2}\left(\lambda^{E}_i+1\right)\right)\prod\limits_{\substack{k=1\\k\neq i}}^{3} \left(\lambda^{E}_k\right)^{\alpha_k-1}\, | \, \alpha_k>1, \, k,i=1,2,3 \right\}, $
\item $\mathcal{E}_7=\left\{ \widetilde{\lambda}_i^{E}=\frac{\imath+1}{1+\lambda_i^{E}}\prod\limits_{\substack{k=1\\k\neq i}}^{3} \left(\lambda^{E}_k\right)^{\alpha_k-1}\, | \, \alpha_k> 1, \, k,i=1,2,3\right\}, $
\item $\mathcal{E}_8=\left\{ \widetilde{\lambda}_i^{E}=e^{\imath\lambda_i^{E}}\prod\limits_{\substack{k=1\\k\neq i}}^{3} \left(\lambda^{E}_k\right)^{\alpha_k-1}\, | \, \alpha_k>1, \, k,i=1,2,3\right\}, $
\item $\mathcal{E}_9=\left\{ \widetilde{\lambda}_i^{E}= \log\left(\imath\lambda_i^{E}+2\right)\prod\limits_{\substack{k=1\\k\neq i}}^{3} \left(\lambda^{E}_k\right)^{\alpha_k-1}\, | \, \alpha_k>1, \, k,i=1,2,3\right\}. $
\end{itemize}
\end{example}
\noindent
At this stage, we aim to introduce two new three-parameter families of weighted enrichment functions, which will be extensively used in the numerical examples illustrated in Section~\ref{sec4}. To simplify the notations, from now on we use the standard cyclic convention
\begin{equation*}
    \mathbf{v}_4^{E} := \mathbf{v}_1^{E}, \quad \mathbf{v}_5^{E} := \mathbf{v}_2^{E}, \quad \lambda_4^{E} := \lambda_1^{E}, \quad \lambda_5^{E} := \lambda_2^{E}.
\end{equation*}
\noindent
Taking inspiration from~\cite{nudo5082357general}, we consider the weight function
\begin{equation}\label{defofh}
\omega^{E}_{ \mu ,\alpha,\beta}\left(\mathbf{x}\right)= \sum_{j=1}^{3}\left(1-\lambda_{j}^{E}\left(\mathbf{x}\right)\right)^\mu\left(\lambda^{E}_{j+1}\left(\mathbf{x}\right)\right)^{\alpha}\left(\lambda^{E}_{j+2}\left(\mathbf{x}\right)\right)^{\beta}, \quad \mu\ge0, \quad \alpha,\beta>-1.
\end{equation}
\noindent
A straightforward computation shows that the restriction of $\omega^{E}_{ \mu ,\alpha,\beta}\left(\mathbf{x}\right)$ to any edge of $E$ coincides with the classical Jacobi weight function on $[0,1]$, i.e.
\begin{equation}\label{weig}
\omega_{\mu,\alpha,\beta}^{E} \left(t\mathbf{v}_{j+1}^{E} + (1-t)\mathbf{v}_{j+2}^{E}\right)=\omega_{\alpha,\beta}(t):=t^{\alpha}\left(1-t\right)^{\beta}, \quad t\in[0,1], \quad j=1,2,3.
 \end{equation}
\noindent
The weight function $\omega^{E}_{ \mu ,\alpha,\beta}$ is used in the following two theorems, to introduce three-parameter families of admissible weighted enrichment functions.
\begin{theorem}
Let $I_{1}, I_{2}$, $I_{3}$ be finite sets of integers and let $f_{\ell}^{E}:[0,1]\rightarrow\mathbf{R}$,  $\ell\in I_{1}\cup I_{2}\cup I_{3}$, be integrable functions. We assume that these functions are strictly positive almost everywhere on $[0,1]$, i.e. except on a set of measure zero, and that, for each $k=1,2,3$, there exist two indices $r_k,z_k\in I_k$ such that 
\begin{equation*}
f_{r_k}^E(0)=f_{z_k}^E(1)=0.    
\end{equation*}
Then,  for any choice of the parameters $\mu\ge0$ and $\alpha,\beta>-1$, the enrichment functions
\begin{equation}\label{enrfuns1}
\widetilde{\lambda}^{E}_k(\mathbf{x})=\omega^{E}_{ \mu ,\alpha,\beta}\left(\mathbf{x}\right)\prod_{\ell\in I_k} f_{\ell}^{E}\left(\lambda_k^{E}(\mathbf{x})\right), \quad k=1,2,3
\end{equation}
are admissible.
\end{theorem}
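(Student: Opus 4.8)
The plan is to reduce everything to the kernel criterion of Theorem~\ref{th3}: the functions $\widetilde\lambda_1^E,\widetilde\lambda_2^E,\widetilde\lambda_3^E$ in~\eqref{enrfuns1} are admissible precisely when $\operatorname{Ker}(\mathbb{G})=\{0\}$. The first move is to put $\mathbb{G}$ in the simplified shape~\eqref{dnnn}, which requires the vanishing conditions~\eqref{vancond}. Writing $g_k(s):=\prod_{\ell\in I_k}f_\ell^{E}(s)$, so that $\widetilde\lambda_k^E=\omega_{\mu,\alpha,\beta}^E\,g_k(\lambda_k^E)$, I would use $\lambda_k^E(\mathbf{v}_j^E)=\delta_{kj}$ together with the hypothesis that $I_k$ contains indices $r_k,z_k$ with $f_{r_k}^E(0)=f_{z_k}^E(1)=0$. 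This makes the scalar factor $g_k$ vanish both at $0$ and at $1$, so $g_k(\lambda_k^E(\mathbf{v}_j^E))=0$ for every $j$ and hence $\widetilde\lambda_k^E(\mathbf{v}_j^E)=0$, i.e.\ \eqref{vancond} holds. By Remark~\ref{remark_impo} the matrix then reduces to the array of edge averages $\mathbb{G}_{jk}=\widetilde{\mathcal{L}}_j(\widetilde\lambda_k^E)$.

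Next I would compute these entries by restricting $\widetilde\lambda_k^E$ to the edges. Parametrising $\mathbf{e}_j^E$ as $t\mathbf{v}_{j+1}^E+(1-t)\mathbf{v}_{j+2}^E$ with $t\in[0,1]$, one has $\lambda_j^E=0$, $\lambda_{j+1}^E=t$, $\lambda_{j+2}^E=1-t$ along the edge, and by~\eqref{weig} the restriction of the weight is the Jacobi weight $t^\alpha(1-t)^\beta$, independently of $\mu$. Inserting this into~\eqref{enrichment_linear_functionals} gives
\[
\mathbb{G}_{jk}=\int_0^1 t^\alpha(1-t)^\beta\,g_k\!\left(\lambda_k^E(t)\right)\,dt .
\]
On the diagonal $k=j$ one has $\lambda_j^E\equiv 0$ on $\mathbf{e}_j^E$, whence $g_j(0)=0$ and $\mathbb{G}_{jj}=0$. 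For $k=j+1$ the integrand is $t^\alpha(1-t)^\beta g_{j+1}(t)$ and for $k=j+2$ it is $t^\alpha(1-t)^\beta g_{j+2}(1-t)$. Since each $f_\ell^E$, and therefore each $g_k$, is strictly positive almost everywhere, and $t^\alpha(1-t)^\beta>0$ on $(0,1)$, these integrands are positive off a null set, so every off-diagonal entry is a strictly positive finite number.

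Thus $\mathbb{G}$ has zero diagonal and strictly positive off-diagonal entries, and its determinant expands as
\[
\det\mathbb{G}=\mathbb{G}_{12}\mathbb{G}_{23}\mathbb{G}_{31}+\mathbb{G}_{13}\mathbb{G}_{21}\mathbb{G}_{32}>0,
\]
being a sum of products of positive quantities. Hence $\operatorname{Ker}(\mathbb{G})=\{0\}$, and Theorem~\ref{th3} shows that $\mathcal{F}=(E,V_h(E),\Xi_E)$ is a finite element, i.e.\ the functions~\eqref{enrfuns1} are admissible. As by-products, invertibility of $\mathbb{G}$ forces the linear independence of the $\widetilde\lambda_k^E$, and \eqref{vancond} gives $W_h(E)\cap\widetilde W_h(E)=\{0\}$, since any element of this intersection is a $P^1$ function vanishing at the three vertices, hence identically zero.

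The main obstacle I anticipate is the regularity bookkeeping at the vertices when $\alpha,\beta\in(-1,0)$: there $\omega_{\mu,\alpha,\beta}^E$ is singular, so one must justify that the edge functionals~\eqref{enrichment_linear_functionals} remain finite and that the vanishing~\eqref{vancond} survives the competing $0\cdot\infty$ behaviour. The finiteness of the $\mathbb{G}_{jk}$ rests precisely on the integrability of the Jacobi weight on $[0,1]$ against the factors $g_k$, which is the point where the hypotheses on the $f_\ell^E$ must be used carefully; once finiteness and the vertex vanishing are secured, the determinant computation above is immediate and uniform in $\mu\ge0$.
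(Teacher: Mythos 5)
Your proposal follows essentially the same route as the paper's own proof: establish the vanishing conditions~\eqref{vancond} at the vertices from $f_{r_k}^E(0)=f_{z_k}^E(1)=0$, reduce $\mathbb{G}$ to the form~\eqref{dnnn}, parametrise the edges to show the diagonal entries vanish while the off-diagonal entries are strictly positive, and conclude via $\det\mathbb{G}=\mathbb{G}_{12}\mathbb{G}_{23}\mathbb{G}_{31}+\mathbb{G}_{13}\mathbb{G}_{21}\mathbb{G}_{32}>0$ and Theorem~\ref{th3}. The regularity caveat you flag for $\alpha,\beta\in(-1,0)$ (the $0\cdot\infty$ competition at the vertices and the finiteness of the edge integrals) is a genuine subtlety that the paper's proof also passes over silently, so raising it is a point in your favour rather than a divergence of method.
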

\begin{proof}
\noindent
To obtain the thesis, we first show that the enrichment functions $\widetilde{\lambda}^{E}_{1},\widetilde{\lambda}^{E}_{2}$, $\widetilde{\lambda}^{E}_{3}$, vanish at the vertices $\mathbf{v}^E_{1},\mathbf{v}^E_{2},\mathbf{v}^E_{3}$ of the triangle $E$, i.e.
\begin{equation*}
\widetilde{\lambda}_{k}^{E}\left(\mathbf{v}^E_j\right)=0, \quad j,k=1,2,3.
\end{equation*}
\noindent
Due to the Kronecker delta property~\eqref{deltaprop}, we have
\begin{equation*}
\widetilde{\lambda}_{k}^{E}\left(\mathbf{v}^E_j\right)= \omega^{E}_{ \mu ,\alpha,\beta}\left(\mathbf{v}^E_j\right)\prod_{\ell\in I_{k}} f_{\ell}^{E}\left(\lambda_{k}^E\left(\mathbf{v}^E_j\right)\right)= \omega^{E}_{ \mu ,\alpha,\beta}\left(\mathbf{v}^E_j\right)\prod_{\ell\in I_{k}} f_{\ell}^{E}\left(\delta_{kj}\right). 
\end{equation*}
\noindent
Now, if $j\neq k$, it holds
\begin{equation*}
\widetilde{\lambda}_{k}^{E}\left(\mathbf{v}^E_j\right)=  \omega^{E}_{ \mu ,\alpha,\beta}\left(\mathbf{v}^E_j\right)\prod_{\ell\in I_{k}} f_{\ell}^{E}\left(0\right)=0 
\end{equation*}
\noindent
because the hypotheses of the theorem ensure the existence of an index $r_{k}\in I_{k}$ such that $f_{r_{k}}^E\left(0\right)=0$. On the other hand, when $j=k$, we obtain
\begin{equation*}
\widetilde{\lambda}_{k}^{E}\left(\mathbf{v}^E_j\right)=  \omega^{E}_{ \mu ,\alpha,\beta}\left(\mathbf{v}^E_j\right)\prod_{\ell\in I_{k}} f_{\ell}^{E}\left(1\right)=0 
\end{equation*}
\noindent
since there exists an index $z_{k}\in I_{k}$ such that $f_{z_{k}}^E\left(1\right)=0$. As a consequence, the matrix defined in~\eqref{dnn} can be written as
\begin{displaymath}
\mathbb{G}= \begin{pmatrix}
\widetilde{\mathcal{L}}_1\left(\widetilde{\lambda}_1^{E}\right) & \widetilde{\mathcal{L}}_1\left(\widetilde{\lambda}_2^{E}\right)&\widetilde{\mathcal{L}}_1\left(\widetilde{\lambda}_3^{E}\right) \\
\widetilde{\mathcal{L}}_2\left(\widetilde{\lambda}_1^{E}\right) & \widetilde{\mathcal{L}}_2\left(\widetilde{\lambda}_2^{E}\right)&\widetilde{\mathcal{L}}_2\left(\widetilde{\lambda}_3^{E}\right) \\
\widetilde{\mathcal{L}}_3\left(\widetilde{\lambda}_1^{E}\right) & \widetilde{\mathcal{L}}_3\left(\widetilde{\lambda}_2^{E}\right)&\widetilde{\mathcal{L}}_3\left(\widetilde{\lambda}_3^{E}\right)
\end{pmatrix}.
\end{displaymath}
\noindent
Now, we prove that only the vector $\mathbf{0}\in\mathbf{R}^{3}$ belongs to the kernel of the matrix $\mathbb{G}$. To this aim, we consider the following parametrization of the edge $\mathbf{e}^{E}_1$:
\begin{eqnarray*}
t&\rightarrow& t {\mathbf{v}}^{E}_{2}+(1-t){\mathbf{v}}^{E}_{3}, \quad t\in[0,1].
\end{eqnarray*}
\noindent
Using the properties~\eqref{deltaprop} and~\eqref{prop21} of the barycentric coordinates, it holds
\begin{eqnarray*}
\widetilde{\mathcal{L}}_1\left(\widetilde{\lambda}_1^{E}\right)&=&\frac{1}{\left\lvert \mathbf{e}_1^{E}\right\rvert}\int_{ \mathbf{e}_1^{E}}\widetilde{\lambda}_1^{E}(\mathbf{x})d \mathbf{x}=\int_{0}^1 \omega_{\alpha,\beta}(t) \prod_{\ell\in I_1} f_{\ell}^{E}\left(\lambda_1^{E}\left(t {\mathbf{v}}^{E}_{2}+(1-t){\mathbf{v}}^{E}_{3}\right)\right)d t=\\
&=&\int_{0}^1 \omega_{\alpha,\beta}(t) \prod_{\ell\in I_1} f_{\ell}^{E}\left(t\lambda_1^{E}\left( {\mathbf{v}}^{E}_{2}\right)+(1-t)\lambda_1^{E}\left({\mathbf{v}}^{E}_{3}\right)\right)d t=\\
&=&\int_{0}^1 \omega_{\alpha,\beta}(t) \prod_{\ell\in I_1} f_{\ell}^{E}\left(0\right)d t=0
\end{eqnarray*}
\noindent
since, by assumption, there exists an index $r_{1}\in I_{1}$ such that $f_{r_{1}}^E\left(0\right)=0$. Analogously, considering the following parametrization of the edge $\mathbf{e}^{E}_2$:
\begin{eqnarray*}
t&\rightarrow& t {\mathbf{v}}^{E}_{3}+(1-t){\mathbf{v}}^{E}_{1}, \quad t\in[0,1]
\end{eqnarray*}
\noindent
and recalling that the functions $f_{\ell}^{E}$ are strictly positive almost everywhere for $\ell\in I_1$, we have
\begin{eqnarray*}
\widetilde{\mathcal{L}}_2\left(\widetilde{\lambda}_1^{E}\right)&=&\frac{1}{\left\lvert \mathbf{e}_2^{E}\right\rvert}\int_{ \mathbf{e}_2^{E}}\widetilde{\lambda}_1^{E}(\mathbf{x})d \mathbf{x}=\int_{0}^1 \omega_{\alpha,\beta}(t) \prod_{\ell\in I_1} f_{\ell}^{E}\left(\lambda_1^{E}\left(t {\mathbf{v}}^{E}_{3}+(1-t){\mathbf{v}}^{E}_{1}\right)\right)d t=\\
&=&\int_{0}^1 \omega_{\alpha,\beta}(t) \prod_{\ell\in I_1} f_{\ell}^{E}\left(t\lambda_1^{E}\left( {\mathbf{v}}^{E}_{3}\right)+(1-t)\lambda_1^{E}\left({\mathbf{v}}^{E}_{1}\right)\right)d t=\\
&=&\int_{0}^1 \omega_{\alpha,\beta}(t) \prod_{\ell\in I_1} f_{\ell}^{E}\left(1-t\right)d t>0.
\end{eqnarray*}
\noindent
A similar argument shows that $\widetilde{\mathcal{L}}_3\left(\widetilde{\lambda}_1^{E}\right)>0$ and, more generally, that
\begin{equation*}
\widetilde{\mathcal{L}}_i\left(\widetilde{\lambda}_j^{E}\right)=\left(1-\delta_{ij}\right)\mathbb{G}_{ij} \quad \text{with} \quad \mathbb{G}_{ij}>0, \quad i,j=1,2,3. 
\end{equation*}
\noindent
Thus, the matrix $\mathbb{G}$ is such that
\begin{equation}
   \mathbb{G}= \begin{bmatrix}
0 & \mathbb{G}_{12}& \mathbb{G}_{13} \\
\mathbb{G}_{21} & 0 & \mathbb{G}_{23} \\
\mathbb{G}_{31} & \mathbb{G}_{32} & 0
\end{bmatrix} \quad \text{and} \quad \operatorname{det}\left(\mathcal{\mathbb{G}}\right)=
\mathbb{G}_{12}\mathbb{G}_{23}\mathbb{G}_{31}+\mathbb{G}_{13}\mathbb{G}_{21}\mathbb{G}_{32}>0.
\end{equation}
\noindent
We can conclude that $\operatorname{Ker}(\mathcal{\mathbb{G}})=\{{0}\}$ and the thesis follows from Theorem~\ref{th3}.
\end{proof}
\begin{theorem}\label{thmnew} 
Let $f_{\ell}^{E}:[0,1]\rightarrow\mathbf{R}$, $\ell=1,\dots,6$,  be integrable functions with a constant sign. We assume that  $f_{\ell}^{E}\neq 0$ almost everywhere and that
\begin{equation}\label{hypimp}
f_\ell^{E}(0)=0, \quad \ell=1,\dots,6.
\end{equation}
Then, for any choice of the parameters $\mu\ge0$ and $\alpha,\beta>-1$, the enrichment functions
\begin{equation}\label{enrfuns13}
\widetilde{\lambda}^{E}_k(\mathbf{x})=\omega^{E}_{ \mu ,\alpha,\beta}\left(\mathbf{x}\right)f_{2k-1}^{E}\left(\lambda_{k+1}^{E}(\mathbf{x})\right)f_{2k}^{E}\left(\lambda_{k+2}^{E}(\mathbf{x})\right), \quad k=1,2,3
\end{equation}
are admissible.    
\end{theorem}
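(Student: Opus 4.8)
The plan is to verify the condition of Theorem~\ref{th3}, i.e. that $\operatorname{Ker}(\mathbb{G})=\{0\}$, by showing that for the functions \eqref{enrfuns13} the matrix $\mathbb{G}$ is in fact \emph{diagonal} with nonzero diagonal entries. The first step is to establish the vertex-vanishing conditions \eqref{vancond}. Fix $k\in\{1,2,3\}$; by the cyclic convention the enrichment function reads $\widetilde{\lambda}_k^{E}=\omega^{E}_{\mu,\alpha,\beta}\,f_{2k-1}^{E}(\lambda_{k+1}^{E})\,f_{2k}^{E}(\lambda_{k+2}^{E})$. At any vertex $\mathbf{v}_j^{E}$, the Kronecker property \eqref{deltaprop} gives $\lambda_{k+1}^{E}(\mathbf{v}_j^{E})=\delta_{k+1,j}$ and $\lambda_{k+2}^{E}(\mathbf{v}_j^{E})=\delta_{k+2,j}$, so at least one of these two arguments equals $0$; invoking the hypothesis \eqref{hypimp} that $f_\ell^{E}(0)=0$, the corresponding factor vanishes and hence $\widetilde{\lambda}_k^{E}(\mathbf{v}_j^{E})=0$ for all $j,k$. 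By Remark~\ref{remark_impo}, the matrix $\mathbb{G}$ of \eqref{dnn} then collapses to the simpler form \eqref{dnnn} built from the edge-average functionals \eqref{enrichment_linear_functionals}.

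Next I would compute the entries $\widetilde{\mathcal{L}}_i(\widetilde{\lambda}_k^{E})$ by restricting to edges, exactly as in the preceding proof. Parametrizing $\mathbf{e}_i^{E}$ by $t\mathbf{v}_{i+1}^{E}+(1-t)\mathbf{v}_{i+2}^{E}$ and using \eqref{prop21} together with \eqref{weig}, the coordinate $\lambda_i^{E}$ vanishes identically along $\mathbf{e}_i^{E}$. For an off-diagonal index $i\neq k$, since $\{k,k+1,k+2\}=\{1,2,3\}$ modulo $3$, we have $i\in\{k+1,k+2\}$, so one of the two arguments $\lambda_{k+1}^{E},\lambda_{k+2}^{E}$ reduces to $\lambda_i^{E}\equiv 0$ on $\mathbf{e}_i^{E}$; by \eqref{hypimp} the integrand is then identically zero, giving $\widetilde{\mathcal{L}}_i(\widetilde{\lambda}_k^{E})=0$. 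For the diagonal index $i=k$, the edge $\mathbf{e}_k^{E}$ carries the two remaining coordinates as $\lambda_{k+1}^{E}=t$ and $\lambda_{k+2}^{E}=1-t$, so by \eqref{weig} one obtains
\begin{equation*}
\mathbb{G}_{kk}=\widetilde{\mathcal{L}}_k\!\left(\widetilde{\lambda}_k^{E}\right)=\int_{0}^{1}\omega_{\alpha,\beta}(t)\,f_{2k-1}^{E}(t)\,f_{2k}^{E}(1-t)\,dt,\quad k=1,2,3.
\end{equation*}

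Finally I would show that each diagonal entry is nonzero. Since $\omega_{\alpha,\beta}(t)=t^{\alpha}(1-t)^{\beta}>0$ on $(0,1)$ and, by hypothesis, $f_{2k-1}^{E}$ and $f_{2k}^{E}$ have constant sign and are nonzero almost everywhere, the product $f_{2k-1}^{E}(t)f_{2k}^{E}(1-t)$ keeps a constant sign and is nonzero almost everywhere on $(0,1)$; hence the integrand above has constant sign and is nonzero a.e., which forces $\mathbb{G}_{kk}\neq 0$. Consequently $\mathbb{G}$ is diagonal with $\operatorname{det}(\mathbb{G})=\mathbb{G}_{11}\mathbb{G}_{22}\mathbb{G}_{33}\neq 0$, so $\operatorname{Ker}(\mathbb{G})=\{0\}$ and the admissibility follows from Theorem~\ref{th3}. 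The one delicate point is precisely this last step: the constant-sign assumption is exactly what rules out cancellation in the integral, and it must be combined with the almost-everywhere nonvanishing of the $f_\ell^{E}$ to guarantee a strictly nonzero value; everything else reduces to the bookkeeping of the cyclic indices and the edge restrictions already used in the previous theorem.
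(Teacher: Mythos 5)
Your proposal is correct and follows essentially the same route as the paper's own proof: establish the vertex-vanishing conditions so that $\mathbb{G}$ reduces to the form \eqref{dnnn}, use the edge parametrizations together with \eqref{prop21} and \eqref{weig} to show $\mathbb{G}$ is diagonal with entries $\int_0^1\omega_{\alpha,\beta}(t)f_{2k-1}^{E}(t)f_{2k}^{E}(1-t)\,dt\neq 0$, and conclude via Theorem~\ref{th3}. Your closing remark correctly identifies the role of the constant-sign and a.e.-nonvanishing hypotheses, which is exactly the justification the paper gives for the nonvanishing of the diagonal entries.
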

\begin{proof}
To prove this theorem, it suffices to show that $\mathbf{z}=\mathbf{0}\in\mathbf{R}^{3}$ is the only vector such that $\mathbb{G}\mathbf{z}=\mathbf{0}$, where the matrix $\mathbb{G}$ is defined in~\eqref{dnn}. Due to the Kronecker delta property of the barycentric coordinates, relationship~\eqref{hypimp} allows us to conclude that the enrichment functions $\widetilde{\lambda}^{E}_{1},\widetilde{\lambda}^{E}_{2}$, $\widetilde{\lambda}^{E}_{3}$ satisfy the condition~\eqref{vancond}, i.e.
\begin{displaymath}
\widetilde{\lambda}^{E}_k\left(\mathbf{v}^E_{j}\right)=\omega^{E}_{ \mu ,\alpha,\beta}\left(\mathbf{v}^E_{j}\right)f_{2k-1}^{E}\left(\lambda_{k+1}^{E}\left(\mathbf{v}^E_{j}\right)\right)f_{2k}^{E}\left(\lambda_{k+2}^{E}\left(\mathbf{v}^E_{j}\right)\right)=0, \quad j,k=1,2,3,
\end{displaymath}
\noindent
where $\mathbf{v}^E_{1},\mathbf{v}^E_{2},\mathbf{v}^E_{3}$ are the vertices of the triangle $E$. As a consequence (see remark \ref{remark_impo}), the matrix $\mathbb{G}$ is given by
\begin{equation}
\mathbb{G}= \begin{pmatrix}
\widetilde{\mathcal{L}}_1\left(\widetilde{\lambda}_1^{E}\right) & \widetilde{\mathcal{L}}_1\left(\widetilde{\lambda}_2^{E}\right)&\widetilde{\mathcal{L}}_1\left(\widetilde{\lambda}_3^{E}\right) \\
\widetilde{\mathcal{L}}_2\left(\widetilde{\lambda}_1^{E}\right) & \widetilde{\mathcal{L}}_2\left(\widetilde{\lambda}_2^{E}\right)&\widetilde{\mathcal{L}}_2\left(\widetilde{\lambda}_3^{E}\right) \\
\widetilde{\mathcal{L}}_3\left(\widetilde{\lambda}_1^{E}\right) & \widetilde{\mathcal{L}}_3\left(\widetilde{\lambda}_2^{E}\right)&\widetilde{\mathcal{L}}_3\left(\widetilde{\lambda}_3^{E}\right)
\end{pmatrix}.
\end{equation}
\noindent
To compute its entries, we start by considering the following parametrization of the edge $\mathbf{e}^{E}_1$:
\begin{eqnarray*}
t&\rightarrow& t {\mathbf{v}}^{E}_{2}+(1-t){\mathbf{v}}^{E}_{3}, \quad t\in[0,1]
\end{eqnarray*}
\noindent
and, taking into account the relationships~\eqref{deltaprop} and~\eqref{prop21}, we get
\begin{eqnarray*}
\widetilde{\mathcal{L}}_1\left(\widetilde{\lambda}_1^{E}\right)&=&\frac{1}{ \left\lvert \mathbf{e}_1^{E}\right\rvert}\int_{ \mathbf{e}_1^{E}} h^{E}_{ \mu ,\alpha,\beta}\left(\mathbf{x}\right)f_{1}^{E}\left(\lambda_{2}^{E}(\mathbf{x})\right)f_{2}^{E}\left(\lambda_{3}^{E}(\mathbf{x})\right) d \mathbf{x}=\\
&=&\int_{0}^1 \omega_{\alpha,\beta}(t)f^{E}_{1}\left(\lambda_2^{E}\left(t {\mathbf{v}}^{E}_{2}+(1-t){\mathbf{v}}^{E}_{3}\right)\right)f^{E}_2\left(\lambda_3^{E}\left(t {\mathbf{v}}^{E}_{2}+(1-t){\mathbf{v}}^{E}_{3}\right)\right)d t=\\
&=&\int_{0}^1 \omega_{\alpha,\beta}(t)f^{E}_{1}\left(t\lambda_2^{E}\left({\mathbf{v}}^{E}_{2}\right)+(1-t) \lambda_2^{E}\left({\mathbf{v}}^{E}_{3}\right)\right)f^{E}_2\left(t\lambda_3^{E}\left( {\mathbf{v}}^{E}_{2}\right)+(1-t)\lambda_3^{E}\left({\mathbf{v}}^{E}_{3}\right)\right)d t=\\
&=& \int_{0}^1  \omega_{\alpha,\beta}(t)f_{1}^{E}(t)f_2^{E}(1-t)d t\neq 0,\\
\end{eqnarray*}
\noindent
because, by hypothesis, the functions $f_1^{E}$, $f_2^{E}$ have a constant sign over the interval $[0,1]$ and are nonzero almost everywhere. On the other hand, we have
\begin{equation*}
\widetilde{\mathcal{L}}_1\left(\widetilde{\lambda}^{E}_2\right)=\frac{1}{ \left\lvert \mathbf{e}_1^{E}\right\rvert}\int_{ \mathbf{e}_1^{E}}\widetilde{\lambda}_2^{E}(\mathbf{x}) d \mathbf{x}=\frac{1}{ \left\lvert \mathbf{e}_1^{E}\right\rvert}\int_{ \mathbf{e}_1^{E}} \omega_{\mu,\alpha,\beta}^{E}(\mathbf{x})f_{3}^{E}\left(\lambda^{E}_{3}(\mathbf{x})\right)f_{4}^{E}\left(\lambda^{E}_{1}(\mathbf{x})\right) d \mathbf{x}=0,
\end{equation*}
and 
\begin{equation*}
\widetilde{\mathcal{L}}_1\left(\widetilde{\lambda}_3^{E}\right)=\frac{1}{ \left\lvert \mathbf{e}_1^{E}\right\rvert}\int_{ \mathbf{e}_1^{E}}\widetilde{\lambda}_3^{E}(\mathbf{x}) d \mathbf{x}=\frac{1}{ \left\lvert \mathbf{e}_1^{E}\right\rvert}\int_{ \mathbf{e}_1^{E}} \omega_{\mu,\alpha,\beta}^{E}(\mathbf{x})f_{5}^{E}\left(\lambda_{1}^{E}(\mathbf{x})\right)f_{6}^{E}\left(\lambda_{2}^{E}(\mathbf{x})\right) d \mathbf{x}=0,
\end{equation*}
since the barycentric coordinate $\lambda_{1}^{E}$ vanishes on the edge $\mathbf{e}_{1}^{E}$. In exactly the same way, we obtain
\begin{equation*}
\widetilde{\mathcal{L}}_2\left(\widetilde{\lambda}^{E}_1\right)=\widetilde{\mathcal{L}}_2\left(\widetilde{\lambda}^{E}_3\right)=0, \quad \text{and} \quad \widetilde{\mathcal{L}}_2\left(\widetilde{\lambda}_2^{E}\right)\neq 0,
\end{equation*}
\noindent
and 
\begin{equation*}
\widetilde{\mathcal{L}}_3\left(\widetilde{\lambda}^{E}_1\right)=\widetilde{\mathcal{L}}_3\left(\widetilde{\lambda}^{E}_2\right)=0, \quad \text{and} \quad \widetilde{\mathcal{L}}_3\left(\widetilde{\lambda}^{E}_3\right)\neq 0.
\end{equation*}
Thus, $\mathbb{G}$ is a diagonal matrix, i.e.
\begin{equation}\label{matN}
   \mathbb{G}= \begin{bmatrix}
\mathbb{G}_{11} & 0&0 \\
0 & \mathbb{G}_{22}&0 \\
0 & 0 & \mathbb{G}_{33}
\end{bmatrix} \quad \text{with} \quad \mathbb{G}_{ii}\neq0, \quad i=1,2,3.
\end{equation}
\noindent
Since each diagonal entry is nonzero, we can conclude that $\mathbf{z}=\mathbf{0}\in\mathbf{R}^{3}$ is the only vector such that $\mathbb{G}\mathbf{z}=\mathbf{0}$, i.e. $\operatorname{Ker}(\mathcal{\mathbb{G}})=\{0\}$, and the thesis follows from Theorem~\ref{th3}.
\end{proof}

\begin{example}
Due to Theorem~\ref{thmnew}, we can  enrich the linear lagrangian finite element to the finite element ${\mathcal{F}}$ by using the following sets of admissible enrichment functions
\begin{itemize}
\item $\mathcal{E}_{10}=\left\{ \widetilde{\lambda}^{E}_i=\omega^{E}_{ \mu ,\alpha,\beta}\sin\left(\lambda^{E}_{i+1}\right)\sin\left(\lambda^{E}_{i+2}\right) \, |  \, \mu\ge0, \, \alpha,\beta>-1, \,  i=1,2,3 \right\}, $
\item $\mathcal{E}_{11}=\left\{ \widetilde{\lambda}^{E}_i=\omega^{E}_{ \mu ,\alpha,\beta}\left(e^{\lambda_{i+1}^{E}}-1\right)\left(e^{\lambda_{i+2}^{E}}-1\right) \, |  \, \mu\ge0, \, \alpha,\beta>-1, \,  i=1,2,3 \right\},$
\item $\mathcal{E}_{12}=\left\{ \widetilde{\lambda}^{E}_i=\omega^{E}_{ \mu ,\alpha,\beta}\left(e^{\lambda_{i+1}^{E}}-1\right)\sin\left(\lambda^{E}_{i+2}\right) \, |  \, \mu\ge0, \, \alpha,\beta>-1, \,  i=1,2,3 \right\},$
\item $\mathcal{E}_{13}=\left\{ \widetilde{\lambda}^{E}_i=\omega^{E}_{ \mu ,\alpha,\beta}\sin\left(\lambda_{i+1}^{E}\right)\left(\cos\left(\lambda_{i+2}^{E}\right)-1\right) \, |  \, \mu\ge0, \, \alpha,\beta>-1, \,  i=1,2,3 \right\}, $
\item $\mathcal{E}_{14}=\left\{ \widetilde{\lambda}^{E}_i=\omega^{E}_{ \mu ,\alpha,\beta}\log\left(\lambda_{i+1}^{E}+1\right)\lambda_{i+2}^{E} \, |  \, \mu\ge0, \, \alpha,\beta>-1, \,  i=1,2,3 \right\},$
\item $\mathcal{E}_{15}=\left\{ \widetilde{\lambda}^{E}_i=\omega^{E}_{ \mu ,\alpha,\beta}\left(\lambda_{i+1}^{E}\right)^{\alpha_1}\left(\lambda_{i+2}^{E}\right)^{\beta_1} \, |  \, \mu\ge0, \, \alpha,\beta>-1, \, \alpha_1,\beta_1\ge0, \,  i=1,2,3 \right\}. $
    \end{itemize}
\end{example}
\begin{remark}
We point out that, if we choose $\mu=\alpha=\beta=0$,  the weight function~\eqref{defofh} reduces to $h^{E}_{0,0,0}=1$.  In our numerical experiments, we use this special case to assess the unweighted enrichment strategy.
\end{remark}
\begin{remark}\label{remimp}
\noindent
Since the barycentric coordinate $\lambda_{i}^{E}$ vanishes on the edge $\mathbf{e}_{i}^{E}$, it results
\begin{equation*}
\widetilde{\mathcal{L}}_i\left(\lambda_i^{E}\right)=0, \quad i=1,2,3.    
\end{equation*}
On the other hand, if $i\neq j$, it holds
\begin{equation*}
\widetilde{\mathcal{\mathcal{L}}}_i\left(\lambda_j^{E}\right)=\frac{1}{ \left\lvert \mathbf{e}_i^{E}\right\rvert}\int_{ \mathbf{e}_i^{E}}\lambda^{E}_j(\mathbf{x}) d \mathbf{x}=\int_{0}^1 \lambda_j^{E}\left(t\mathbf{v}^{E}_{i+1}+(1-t)\mathbf{v}^{E}_{i+2}\right)  d t=\frac{1}{2}.
\end{equation*}
\noindent
Furthermore, we note that since the enrichment functions~\eqref{enrfuns13} satisfy the vanishing conditions at the vertices of $E$, the error bounds established in~\cite{DellAccio:2022:AUE} remain valid.
\end{remark}

\subsection{Convergence analysis}
\noindent
Starting from the enriched finite element $\mathcal{F}=\left(E,V_{h}(E),\Xi_{E}\right)$ with admissible basis functions defined in~\eqref{ggwb1g}, we introduce the enriched projector operator $\pi_{h}^{\mathrm{enr}}:C^{0}(E)\rightarrow V_{h}(E)$ such that
\begin{equation}\label{oper}
\pi_h^{\mathrm{enr}}[g](\mathbf{x}) = \sum_{i=1}^{3} \mathcal{L}_i(g) \varphi_i^{E}(\mathbf{x}) + \sum_{i=1}^{3} \widetilde{\mathcal{L}}_i(g) \varphi^{E}_{i+3}(\mathbf{x}).
\end{equation}
\begin{lemma}\label{lemmaimps}
\noindent
The enriched projector operator reproduces the space of polynomials of degree at most one. 
\end{lemma}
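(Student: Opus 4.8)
The plan is to exploit the biorthogonality relations~\eqref{admissible_enrichment} that characterize the admissible enriched basis functions, together with the fact that $P^{1}(E)=W_{h}(E)$ is a subspace of the enriched space $V_{h}(E)$. The key observation is that the operator $\pi_{h}^{\mathrm{enr}}$ defined in~\eqref{oper} acts as the identity on the entire space $V_{h}(E)$, so in particular it reproduces any element of the smaller subspace $P^{1}(E)$.

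To make this precise, I would take an arbitrary $g\in V_{h}(E)$ and expand it in the admissible basis as $g=\sum_{k=1}^{6}c_{k}\varphi_{k}^{E}$. Applying the standard and enriched functionals and using~\eqref{admissible_enrichment}, all cross terms vanish and one finds $\mathcal{L}_{i}(g)=c_{i}$ and $\widetilde{\mathcal{L}}_{i}(g)=c_{i+3}$ for $i=1,2,3$. Substituting these values into the definition~\eqref{oper} returns exactly $\sum_{k=1}^{6}c_{k}\varphi_{k}^{E}=g$, so $\pi_{h}^{\mathrm{enr}}[g]=g$ on all of $V_{h}(E)$. Since $V_{h}(E)=W_{h}(E)\oplus\widetilde{W}_{h}(E)$ with $W_{h}(E)=P^{1}(E)$, every polynomial $p$ of degree at most one lies in $V_{h}(E)$, and the previous identity gives $\pi_{h}^{\mathrm{enr}}[p]=p$, which is the assertion.

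Because the statement is essentially built into the construction of the admissible basis, I do not expect any serious obstacle; the only step needing attention is checking that each of the four families of identities in~\eqref{admissible_enrichment} is used to annihilate the corresponding cross terms. As an alternative, more computational route, one can write $p=\sum_{r=1}^{3}\mathcal{L}_{r}(p)\lambda_{r}^{E}$ and substitute the explicit formulas~\eqref{ggwb1g} for the basis functions: the leading term $\sum_{i}\mathcal{L}_{i}(p)\lambda_{i}^{E}$ already equals $p$, while the correction terms containing $\varphi_{j+3}^{E}$ cancel once one inserts the values $\widetilde{\mathcal{L}}_{j}(p)=\tfrac{1}{2}\sum_{i\neq j}\mathcal{L}_{i}(p)$ furnished by Remark~\ref{remimp}.
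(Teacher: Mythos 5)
Your proposal is correct, and your primary argument takes a genuinely different (and more structural) route than the paper. The paper proves the lemma by the direct computation you sketch as your alternative: it expands $p=\sum_{k}p(\mathbf{v}_k^{E})\lambda_k^{E}$, substitutes the explicit formula $\varphi_i^{E}=\lambda_i^{E}-\tfrac{1}{2}\sum_{j\neq i}\varphi_{j+3}^{E}$ from~\eqref{ggwb1g}, evaluates $\widetilde{\mathcal{L}}_i(p)=\tfrac{1}{2}\sum_{k\neq i}p(\mathbf{v}_k^{E})$ via Remark~\ref{remimp}, and verifies that the two correction sums cancel term by term. Your main argument instead proves the stronger statement that $\pi_h^{\mathrm{enr}}$ is the identity on all of $V_h(E)$: expanding $g=\sum_{k=1}^{6}c_k\varphi_k^{E}$ and applying the biorthogonality relations~\eqref{admissible_enrichment} gives $\mathcal{L}_i(g)=c_i$ and $\widetilde{\mathcal{L}}_i(g)=c_{i+3}$, so~\eqref{oper} returns $g$ itself; since $P^{1}(E)=W_h(E)\subset V_h(E)$, reproduction of linear polynomials follows as a special case. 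This is legitimate because the admissible basis exists by the standing assumption that $\mathbb{G}$ is invertible (Theorems~\ref{th3} and~\ref{th4}), and the functionals in $\Xi_E$ are linear on $V_h(E)$. What your route buys is generality and economy: it exposes $\pi_h^{\mathrm{enr}}$ as a genuine projection onto $V_h(E)$, so it also reproduces the enrichment functions, a fact the paper's computation does not record. What the paper's route buys is independence from the full duality system: it only uses the explicit representation~\eqref{ggwb1g} and the elementary values $\widetilde{\mathcal{L}}_i(\lambda_j^{E})$, and it makes the cancellation mechanism between the $\varphi_{j+3}^{E}$ terms visible, which is the same mechanism that later drives the error decomposition~\eqref{erres}.
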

\begin{proof}
\noindent    
We consider a linear polynomial $p\in W_h(E)$ and we write it as a linear combination of the barycentric coordinates $\lambda_{1}^{E},\lambda_{2}^{E}$, $\lambda_{3}^{E}$, i.e.
\begin{equation}\label{linear}
p(\mathbf{x})=\sum_{k=1}^3 p\left(\mathbf{v}_k^{E}\right)\lambda_k^{E}(\mathbf{x}). \end{equation}
\noindent
Then, it results
\begin{equation}\label{lin1}
\pi^{\mathrm{enr}}_h[p](\mathbf{x})=  \sum_{i=1}^{3}\mathcal{L}_i(p)\varphi^{E}_i(\mathbf{x})+ \sum_{i=1}^{3}\widetilde{\mathcal{L}}_i(p)\varphi^{E}_{i+3}(\mathbf{x})=\sum_{i=1}^{3}p\left(\mathbf{v}_{i}^{E}\right)\varphi^{E}_i(\mathbf{x})+ \sum_{i=1}^{3}\widetilde{\mathcal{L}}_i(p)\varphi^{E}_{i+3}(\mathbf{x}).
\end{equation}
\noindent
By the relationship~\eqref{ggwb1g}, it holds
\begin{equation}\label{lin1_bis}
\pi^{\mathrm{enr}}_h[p](\mathbf{x})=\sum_{i=1}^{3}p\left(\mathbf{v}_{i}^{E}\right)\left[\lambda_i^{E}(\mathbf{x})-\frac{1}{2}\sum\limits_{\substack{ %
j=1  \\ j\neq i}}^3\varphi_{j+3}^{E}(\mathbf{x})\right]+ \sum_{i=1}^{3}\widetilde{\mathcal{L}}_i(p)\varphi^{E}_{i+3}(\mathbf{x}).
\end{equation}
\noindent
This expression can be equivalently recast as
\begin{equation}\label{lin1_tris}
\pi^{\mathrm{enr}}_h[p](\mathbf{x})=p(\mathbf{x})-\frac{1}{2}\sum_{i=1}^{3}p\left(\mathbf{v}_{i}^{E}\right)\sum\limits_{\substack{ %
j=1  \\ j\neq i}}^3\varphi_{j+3}^{E}(\mathbf{x})+ \sum_{i=1}^{3}\widetilde{\mathcal{L}}_i(p)\varphi^{E}_{i+3}(\mathbf{x}).
\end{equation}
\noindent
Now, we use the linearity of the functionals $\widetilde{\mathcal{L}}_i$,  $i=1,2,3$, and the Remark~\ref{remimp} to evaluate
\begin{equation}\label{lin2}
\widetilde{\mathcal{L}}_i(p)=\sum_{k=1}^3 p\left(\mathbf{v}_k^{E}\right)\widetilde{\mathcal{L}}_i     \left(\lambda_k^{E}\right)= \frac{1}{2}\sum\limits_{\substack{ %
k=1  \\ k\neq i}}^3 p\left(\mathbf{v}_k^{E}\right).
\end{equation}
\noindent
Combining~\eqref{lin1_tris} and~\eqref{lin2}, we obtain 
\begin{equation*}
    \pi^{\mathrm{enr}}_h[p]=p-\frac{1}{2}\sum_{i=1}^{3}p\left(\mathbf{v}_i^{E}\right)\sum\limits_{\substack{ %
j=1  \\ j\neq i}}^3\varphi^{E}_{j+3}+ \frac{1}{2}\sum_{i=1}^{3}\varphi^{E}_{i+3}\sum\limits_{\substack{ %
k=1  \\ k\neq i}}^3 p\left(\mathbf{v}_k^{E}\right).
\end{equation*}
\noindent
By easy computations, the thesis follows.
\end{proof}\\
\newline
\noindent
In~\cite{DellAccio:2022:AUE}, the authors have shown that, in the case of enrichment functions satisfying the vanishing conditions~\eqref{vancond}, the approximation error associated to $\pi_h^{\mathrm{enr}}$, i.e. 
\begin{equation} \label{eq:error_npe}
 \varepsilon_{h}^{\mathrm{enr}}[g](\mathbf{x})= g(\mathbf{x}) -  \pi_h^{\mathrm{enr}}[g](\mathbf{x}), \quad g\in C^0(E),
\end{equation}
\noindent
can be decomposed as the sum of the error of the linear lagrangian finite element $\varepsilon_{h}^{\mathrm{lin}}$, defined in~\eqref{eq:errorinterp}, and an additional term that depends on both the enrichment functions $\widetilde{\lambda}^{E}_i$ and the enriched linear functionals $\widetilde{\mathcal{L}}_i$, $i=1,2,3$, that is
\begin{equation}\label{erres}
\varepsilon_{h}^{\mathrm{enr}}[g](\mathbf{x})= \varepsilon_{h}^{\mathrm{lin}}[g](\mathbf{x})+ \sum_{\ell=1}^{3} \left\langle {\boldsymbol{\widetilde{\lambda}}}^{E}({\mathbf{x}}),{\mathbf{g}^E_{\ell}}\right\rangle \mathcal{E}_{\ell}(g) \quad \text{with} \quad \mathcal{E}_{\ell}(g)=\frac{1}{2}\sum\limits_{\substack{ %
i=1  \\ i\neq \ell}}^{3}\mathcal{L}_i-\widetilde{\mathcal{L}}_{\ell}.
\end{equation}
\noindent
We introduce the subclass $C^{1,1}(\Omega)$ made by all the functions $g$ that are continuously differentiable with Lipschitz continuous gradient on $\Omega$, i.e. there exists a constant $\rho>0$ such that
\begin{equation}\label{lg}
\left\| \nabla g\left({\mathbf{x}_1}\right)-\nabla g\left({\mathbf{x}_2}\right) \right\|\leq \rho \left\|{\mathbf{x}_1} - {\mathbf{x}_2}\right\|, \quad \forall {\mathbf{x}_1}, {\mathbf{x}_2} \in \Omega.
\end{equation}
\noindent
We call Lipschitz constant for  $\nabla g$ the smallest possible $\rho$ for which the above relationship holds, and we denote it by $L(\nabla g)$. For this kind of functions, the following error estimate has been derived in~\cite{DellAccio:2022:AUE}:
\begin{equation}\label{infplush}
\left\lVert\varepsilon_h^{\mathrm{enr}}[g] \right\rVert_{L^\infty(E)}=\left\lVert g-  \pi_h^{\mathrm{enr}}[g] \right\rVert_{L^\infty(E)}\leq  \frac{L(\nabla g)}{8}\left( 1+ \frac{3}{2}\max_{i=1,2,3} \left\lVert \left\langle {\mathbf{g}}_i^{E}, {\boldsymbol{\widetilde{\lambda}}}^{E}\right\rangle\right\rVert_{L^\infty(E)} \right)h_E^2.
\end{equation}
\noindent
The above error bound serves as the starting point for developing an explicit error bound in $L^{2}$-norm for the enriched finite element $\mathcal{F}$ relative to the sets of enrichment functions that satisfy Theorem~\ref{thmnew}. To this aim, we consider the global enriched projector $\Pi_{h}^{\mathrm{enr}}:C^{0}(\Omega)\rightarrow V_{h}$, such that 
\begin{displaymath}
\Pi_{h}^{\mathrm{enr}}[g]_{|_{E}}=\pi_{h}^{\mathrm{enr}}\left[g_{|_{E}}\right] \quad \forall E\in\mathcal{T}_{h}(\Omega).
\end{displaymath}
\begin{theorem}\label{impthm}
\noindent
Let $\mu,\alpha,\beta>0$ be fixed parameters such that 
\begin{equation*}
\mu+\alpha+\beta=1.    
\end{equation*}
For any $E\in\mathcal{T}_{h}(\Omega)$, let $\mathcal{F}=(E,V_{h}(E),\Xi_{E})$ be the enriched finite element relative to the enrichment functions
\begin{equation}
\widetilde{\lambda}^{E}_k(\mathbf{x})=\omega^{E}_{ \mu ,\alpha,\beta}\left(\mathbf{x}\right)f_{2k-1}\left(\lambda_{k+1}^{E}(\mathbf{x})\right)f_{2k}\left(\lambda_{k+2}^{E}(\mathbf{x})\right), \quad k=1,2,3,
\end{equation}
which satisfy the assumptions of Theorem~\ref{thmnew}. Then, for any $g\in C^{1,1}(\Omega)$, we have
\begin{equation*}
        \left\lVert g-\Pi^{\mathrm{enr}}_h[g]\right\rVert_{L^2(\Omega)}\le \mathcal{C}_{\mu,\alpha,\beta}h^2,
\end{equation*}
where 
\begin{equation}\label{constantCmualphabeta}   \mathcal{C}_{\mu,\alpha,\beta}=\frac{L\left(\nabla g\right)}{8}\left(1+\frac{9(\mu+1)K^2}{2}H_{\alpha,\beta}\right)\sqrt{\left\lvert \Omega\right\rvert},
\end{equation}
is a constant depending only on the parameters $\mu$, $\alpha$, $\beta$, the enrichment functions, the function $g$ and the domain $\Omega$. 
\end{theorem}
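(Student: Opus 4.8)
The plan is to derive the global $L^2$ bound from the local $L^\infty$ estimate \eqref{infplush}, so that the whole argument reduces to controlling, uniformly over the mesh, the single element-dependent quantity $\max_{i=1,2,3}\bigl\|\langle\mathbf{g}_i^E,\boldsymbol{\widetilde{\lambda}}^{E}\rangle\bigr\|_{L^\infty(E)}$ appearing there. First I would localize and pass from $L^\infty(E)$ to $L^2(E)$ through $\|w\|_{L^2(E)}\le\sqrt{\lvert E\rvert}\,\|w\|_{L^\infty(E)}$, obtaining
\[
\bigl\|g-\Pi_h^{\mathrm{enr}}[g]\bigr\|_{L^2(\Omega)}^2=\sum_{E\in\mathcal{T}_h(\Omega)}\bigl\|g-\pi_h^{\mathrm{enr}}[g]\bigr\|_{L^2(E)}^2\le\sum_{E\in\mathcal{T}_h(\Omega)}\lvert E\rvert\,\bigl\|\varepsilon_h^{\mathrm{enr}}[g]\bigr\|_{L^\infty(E)}^2 .
\]
Since the enrichment functions of Theorem~\ref{thmnew} satisfy the vanishing conditions \eqref{vancond}, estimate \eqref{infplush} applies on every $E$.

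Next I would exploit the fact, already established in the proof of Theorem~\ref{thmnew}, that for these enrichment functions the matrix $\mathbb{G}$ is diagonal (see \eqref{matN}). Thus $\mathbb{G}^{-1}$ is diagonal as well, its $i$-th column $\mathbf{g}_i^E$ has a single nonzero entry $\mathbb{G}_{ii}^{-1}$, and therefore $\|\mathbf{g}_i^E\|_\infty=\lvert\mathbb{G}_{ii}\rvert^{-1}$. The edge parametrization used in that proof shows moreover that $\mathbb{G}_{ii}=\int_0^1\omega_{\alpha,\beta}(t)\,f_{2i-1}(t)\,f_{2i}(1-t)\,dt$ is a fixed number, independent of $E$. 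Accordingly I would set $K:=\max_{\ell=1,\dots,6}\|f_\ell\|_{L^\infty([0,1])}$ and $H_{\alpha,\beta}:=\max_{i=1,2,3}\lvert\mathbb{G}_{ii}\rvert^{-1}$, both mesh-independent constants.

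The crux is to bound $\boldsymbol{\widetilde{\lambda}}^{E}$ in sup norm, which reduces to bounding the weight $\omega^E_{\mu,\alpha,\beta}$ of \eqref{defofh}; this is the only point where the hypothesis $\mu+\alpha+\beta=1$ is used. Applying the weighted arithmetic--geometric mean inequality to each summand gives $(1-\lambda_j^{E})^{\mu}(\lambda_{j+1}^{E})^{\alpha}(\lambda_{j+2}^{E})^{\beta}\le\mu(1-\lambda_j^{E})+\alpha\lambda_{j+1}^{E}+\beta\lambda_{j+2}^{E}$, and summing over $j$ with $\sum_j(1-\lambda_j^{E})=2$ and $\sum_j\lambda_{j+1}^{E}=\sum_j\lambda_{j+2}^{E}=1$ yields $\|\omega^E_{\mu,\alpha,\beta}\|_{L^\infty(E)}\le 2\mu+\alpha+\beta=\mu+1$. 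Hence $\|\widetilde{\lambda}_i^{E}\|_{L^\infty(E)}\le(\mu+1)K^2$, so that $\|\boldsymbol{\widetilde{\lambda}}^{E}\|_1\le 3(\mu+1)K^2$, and the H\"older inequality $\lvert\langle\boldsymbol{\widetilde{\lambda}}^{E},\mathbf{g}_i^E\rangle\rvert\le\|\boldsymbol{\widetilde{\lambda}}^{E}\|_1\,\|\mathbf{g}_i^E\|_\infty$ gives the uniform estimate $\max_{i}\bigl\|\langle\mathbf{g}_i^E,\boldsymbol{\widetilde{\lambda}}^{E}\rangle\bigr\|_{L^\infty(E)}\le 3(\mu+1)K^2H_{\alpha,\beta}$.

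Substituting this into \eqref{infplush} turns the bracketed factor into $1+\tfrac{9(\mu+1)K^2}{2}H_{\alpha,\beta}$, which is exactly the element-independent part of $\mathcal{C}_{\mu,\alpha,\beta}$. Plugging back, bounding $h_E\le h$, and summing with $\sum_{E}\lvert E\rvert=\lvert\Omega\rvert$, I would reach $\|g-\Pi_h^{\mathrm{enr}}[g]\|_{L^2(\Omega)}^2\le\mathcal{C}_{\mu,\alpha,\beta}^2\,h^4$, and taking square roots concludes the proof. I expect the main obstacle to be the weight estimate $\|\omega^E_{\mu,\alpha,\beta}\|_{L^\infty(E)}\le\mu+1$: it is where the constraint $\mu+\alpha+\beta=1$ is indispensable, one must apply the weighted AM--GM inequality with the correct weights, and verify that the cyclic barycentric sums collapse to $2$ and $1$; a secondary point is confirming that $H_{\alpha,\beta}$ is genuinely mesh-independent, which relies on the edge integrals reducing to the fixed Jacobi-weighted integrals on $[0,1]$.
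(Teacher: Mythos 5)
Your proposal is correct and follows essentially the same route as the paper's proof: localization of the $L^2$ norm, the local $L^\infty$ bound \eqref{infplush}, the weighted AM--GM estimate $\lVert\omega^{E}_{\mu,\alpha,\beta}\rVert_{L^\infty(E)}\le\mu+1$ relying on $\mu+\alpha+\beta=1$, the diagonal structure of $\mathbb{G}$ yielding the mesh-independent constant $H_{\alpha,\beta}$ via the Jacobi-weighted edge integrals, and the final summation $\sum_E\lvert E\rvert=\lvert\Omega\rvert$. No gaps; the H\"older step you use is exactly the estimate \eqref{provssa} in the paper.
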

\begin{proof}
\noindent
We compute
\begin{equation}\label{conds_1}
    \left\lVert g-\Pi_h^{\mathrm{enr}}[g]\right\rVert_{L^2(\Omega)}^2=\int_{\Omega}\left\lvert g(\mathbf{x})-\Pi_h^{\mathrm{enr}}[g](\mathbf{x})\right\rvert^2 d \mathbf{x}=\sum_{E\in\mathcal{T}_h(\Omega)}\int_{E}\left\lvert g(\mathbf{x})-\pi_h^{\mathrm{enr}}[g](\mathbf{x})\right\rvert^2 d \mathbf{x}.
\end{equation}
\noindent
By virtue of the local error bound \eqref{infplush}, we obtain
\begin{eqnarray}\label{conds}
\left\lVert g-\pi_h^{\mathrm{enr}}[g]\right\rVert_{L^2(\Omega)}^2&\le& \sum_{E\in\mathcal{T}_h(\Omega)}\left(\frac{L(\nabla g)}{8}\left(1+\frac{3}{2}\max_{i=1,2,3}\left\lVert \left\langle \mathbf{g}^{E}_i,\boldsymbol{\widetilde{\lambda}}^{E} \right\rangle\right\rVert_{L^\infty(E)}\right)\right)^2 h_E^4 \left\lvert E \right\rvert\notag\leq\\
&\le& h^4 \left(\frac{L(\nabla g)}{8}\right)^2 \sum_{E\in\mathcal{T}_h(\Omega)}\left(1+\frac{3}{2}\max_{i=1,2,3}\left\lVert \left\langle \mathbf{g}_i^{E},\boldsymbol{\widetilde{\lambda}}^{E} \right\rangle\right\rVert_{L^\infty(E)}\right)^2 \left\lvert E \right\rvert.
\end{eqnarray}
\noindent
Now, for any $E\in \mathcal{T}_h(\Omega)$, we have 
\begin{equation}\label{provssa}
\left\lVert \left\langle \mathbf{g}_i^{E},\boldsymbol{\widetilde{\lambda}}^{E} \right\rangle\right\rVert_{L^\infty(E)}=\left\lVert g_{i,1}^{E} \widetilde{\lambda}_1^{E}+g_{i,2}^{E} \widetilde{\lambda}_2^{E}+g_{i,3}^{E}\widetilde{\lambda}_3^{E} \right\rVert_{L^\infty(E)}\leq\max_{i,j=1,2,3}\left\lvert g_{i,j}^{E}\right\rvert \left(\sum_{k=1}^3\left\lVert \widetilde{\lambda}_k^{E}\right\rVert_{L^\infty(E)}\right).
\end{equation}
\noindent
Since the weight function~\eqref{defofh} is positive, by~\eqref{partition_of_unity} and the application of the generalized arithmetic-geometric mean inequality (see~\cite{chuan1990note}), for any $\mathbf{x}\in E$, it holds
\begin{eqnarray*}
\omega_{\mu,\alpha,\beta}^{E}(\mathbf{x})&=&   \sum_{j=1}^{3}\left(1-\lambda_{j}^{E}\left(\mathbf{x}\right)\right)^\mu\left(\lambda^{E}_{j+1}\left(\mathbf{x}\right)\right)^{\alpha}\left(\lambda^{E}_{j+2}\left(\mathbf{x}\right)\right)^{\beta}\le\\ &\le&  \sum_{j=1}^{3}\left(\mu\left(1-\lambda_{j}^{E}\left(\mathbf{x}\right)\right)+\alpha\lambda^{E}_{j+1}\left(\mathbf{x}\right)+\beta\lambda^{E}_{j+2}\left(\mathbf{x}\right)\right)=\\ 
   &=& \left(\mu\sum_{j=1}^{3}\left(1-\lambda_{j}^{E}\left(\mathbf{x}\right)\right)+\alpha \sum_{j=1}^{3}\lambda^{E}_{j+1}\left(\mathbf{x}\right)+\beta\sum_{j=1}^{3}\lambda^{E}_{j+2}\left(\mathbf{x}\right)\right)=\\
   &=& 2\mu+\alpha+\beta=\mu+1. 
\end{eqnarray*}
\noindent
As a consequence, we have
\begin{equation*}
    \max_{\mathbf{x}\in E} \ \omega_{\mu,\alpha,\beta}^{E}(\mathbf{x})\le \mu+1.
\end{equation*}
\noindent
From the above relationship, it follows that
\begin{eqnarray}
    \sum_{k=1}^3 \left\lVert \widetilde{\lambda}_k^{E}\right\rVert_{L^\infty(E)}&=&\max_{\mathbf{x}\in E}\sum_{k=1}^3\left\lvert \omega^{E}_{ \mu ,\alpha,\beta}(\mathbf{x})f_{2i-1}\left(\lambda_{i+1}^{E}(\mathbf{x})\right)f_{2i}\left(\lambda_{i+2}^{E}(\mathbf{x})\right) \right\rvert \le \notag\\
    &\le& (\mu+1)\max_{\mathbf{x}\in E}\sum_{k=1}^3 \left\lvert f_{2i-1}\left(\lambda_{i+1}^{E}(\mathbf{x})\right)f_{2i}\left(\lambda_{i+2}^{E}(\mathbf{x})\right) \right\rvert\le \notag\\ \label{provss}
    &\le& 3(\mu+1) K^2,
\end{eqnarray}
\noindent
where the constant $K$ is given by
\begin{equation}\label{boundvarphi}
   K=\max_{\ell=1,\dots,6}\left\lVert f_{\ell} \right\rVert_{L^\infty([0,1])}.  
\end{equation}
\noindent
As the enrichment functions~\eqref{enrfuns13} satisfy the vanishing conditions at the vertices of $E$, the matrix $\mathbb{G}$ takes the form given in~\eqref{matN}. Therefore, its inverse is the diagonal matrix
\begin{equation}\label{invmatN}
   \mathbb{G}^{-1}= \left[\mathbf{g}^{E}_1 \mathbf{g}_2^{E} \mathbf{g}_3^{E}\right]=\begin{pmatrix}
\left(\widetilde{\mathcal{L}}_1\left(\widetilde{\lambda}_1^{E}\right)\right)^{-1} & 0&0 \\
0 & \left(\widetilde{\mathcal{L}}_2\left(\widetilde{\lambda}_2^{E}\right)\right)^{-1}&0 \\
0 & 0&\left(\widetilde{\mathcal{L}}_3\left(\widetilde{\lambda}_3^{E}\right)\right)^{-1}
\end{pmatrix}. 
\end{equation}
\noindent
Since $\left\lvert g_{i,j}^{E}\right\rvert=0$ for $i\neq j$, we consider
\begin{equation}\label{Hmu_1}   
\left\lvert g_{i,i}^{E}\right\rvert= \frac{1}{\left\lvert\widetilde{\mathcal{L}}_i\left(\widetilde{\lambda}_i^{E}\right)\right\rvert} = \frac{1}{\left\lvert\displaystyle\frac{1}{ \left\lvert \mathbf{e}_i^{E}\right\rvert}\int_{ \mathbf{e}_i^{E}} \widetilde{\lambda}^{E}_i(\mathbf{x})d \mathbf{x}\right\rvert}.
\end{equation}
\noindent
Using the following parametrization of the edge $\mathbf{e}^{E}_{i}$
\begin{eqnarray*}
t&\rightarrow& t {\mathbf{v}}^{E}_{i+1}+(1-t){\mathbf{v}}^{E}_{i+2}, \quad t\in[0,1]
\end{eqnarray*}
\noindent
the relationship~\eqref{Hmu_1} can be recast in the following form
\begin{equation*}   
\left\lvert g_{i,i}^{E}\right\rvert=  \frac{1}{\left\lvert\displaystyle\int_{0}^1 \omega_{\alpha,\beta}(t)f_{2i-1}\left(\lambda_{i+1}^{E}\left(t\mathbf{v}_{i+1}^{E}+(1-t)\mathbf{v}_{i+2}^{E}\right)\right)f_{2i}\left(\lambda_{i+2}^E\left(t\mathbf{v}_{i+1}^{E}+(1-t)\mathbf{v}_{i+2}^{E}\right)\right) d t\right\rvert}.
\end{equation*}
\noindent
Taking into account the relationships~\eqref{deltaprop} and~\eqref{prop21}, we get
\begin{equation}\label{Hmu}   
\left\lvert g_{i,i}^{E}\right\rvert=  \frac{1}{\left\lvert\displaystyle\int_{0}^1 \omega_{\alpha,\beta}(t)f_{2i-1}\left(t\right)f_{2i}\left(1-t\right) d t\right\rvert} :=H_{{i,\alpha,\beta}}
\end{equation}
\noindent
and consequently we have
\begin{equation}\label{altra}
    \max_{i=1,2,3}\left\lvert g_{i,i}^E \right\rvert=\max_{i=1,2,3} H_{i,\alpha,\beta}=H_{\alpha,\beta},
\end{equation}
\noindent
where the constant $H_{\alpha,\beta}$ depends only on the enrichment functions and not on the triangulation.
Combining~\eqref{provssa},~\eqref{provss}, and~\eqref{altra}, we can conclude that
\begin{equation*}
    \max_{i=1,2,3}\left\lVert \left\langle \mathbf{g}_i^{E},\boldsymbol{\widetilde{\lambda}}^{E} \right\rangle\right\rVert_{L^\infty(E)}\le 3H_{\alpha,\beta}(\mu+1) K^2.
\end{equation*}
\noindent
Finally, it results
\begin{eqnarray}\label{ssssa}
    \left\lVert g-\Pi^{\mathrm{enr}}_h[g]\right\rVert_{L^2(\Omega)}^2&\le& h^4 \left(\frac{L(\nabla g)}{8}\right)^2 \left(1+\frac{9(\mu+1) K^2}{2}H_{\alpha,\beta}\right)^2\sum_{E\in\mathcal{T}_h(\Omega)} \left\lvert E \right\rvert= \notag\\ &=&h^4 \left(\frac{L(\nabla g)}{8}\right)^2 \left(1+\frac{9(\mu+1) K^2}{2}H_{\alpha,\beta}\right)^2 \left\lvert \Omega \right\rvert.
\end{eqnarray}
Taking the square root of both sides of~\eqref{ssssa}, we obtain the thesis. 
\end{proof}\\
\\
\noindent
In the following theorem, we determine the \textit{optimal} values of $\mu$, $\alpha$, $\beta$ for the set of admissible weighted enrichment functions $\mathcal{E}_{15}$. These values minimize the constant $\mathcal{C}_{\mu,\alpha,\beta}$ given in the previous theorem.

\begin{theorem}\label{thmimss}
Let $\alpha_1,\beta_1\ge 0$ be fixed parameters. For the set of admissible weighted enrichment functions $\mathcal{E}_{15}$, the values that minimize the constant $\mathcal{C}_{\mu,\alpha,\beta}$ are 
   \begin{equation*}
\begin{cases}
(\mu,\alpha,\beta)=(0,1,0) \quad \text{ if } \quad \alpha_1\ge\beta_1 \\
(\mu,\alpha,\beta)=(0,0,1) \quad \text{ if } \quad \beta_1\ge\alpha_1. \\
\end{cases}
 \end{equation*}
\end{theorem}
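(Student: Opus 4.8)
The plan is to specialize the constant $\mathcal{C}_{\mu,\alpha,\beta}$ to $\mathcal{E}_{15}$, turn the minimization into an explicit scalar problem, and exploit convexity. For $\mathcal{E}_{15}$ the building blocks of Theorem~\ref{thmnew} are $f_{2k-1}(t)=t^{\alpha_1}$ and $f_{2k}(t)=t^{\beta_1}$, so that $K=\max_{\ell}\lVert f_\ell\rVert_{L^\infty([0,1])}=1$, and combining~\eqref{Hmu} with~\eqref{altra} gives
\[
H_{\alpha,\beta}=\left(\int_0^1 t^{\alpha+\alpha_1}(1-t)^{\beta+\beta_1}\,dt\right)^{-1}=\frac{1}{B(\alpha+\alpha_1+1,\beta+\beta_1+1)},
\]
where $B$ denotes the Euler Beta function. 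Since $L(\nabla g)$ and $\lvert\Omega\rvert$ are fixed, minimizing $\mathcal{C}_{\mu,\alpha,\beta}$ over the closed simplex $\{\mu,\alpha,\beta\ge0:\ \mu+\alpha+\beta=1\}$ is equivalent to minimizing
\[
\Phi(\mu,\alpha,\beta)=\frac{\mu+1}{B(\alpha+\alpha_1+1,\beta+\beta_1+1)}.
\]

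First I would fix $\mu$ and optimize in $(\alpha,\beta)$ along the segment $\alpha+\beta=1-\mu$. On this segment the sum $a+b=\alpha+\beta+\alpha_1+\beta_1+2=3-\mu+\alpha_1+\beta_1$ is constant, so maximizing $B(a,b)=\Gamma(a)\Gamma(b)/\Gamma(a+b)$ amounts to maximizing $\Gamma(a)\Gamma(b)$. The map $\alpha\mapsto\log\Gamma(a)+\log\Gamma(b)$ is convex, as its second derivative equals $\psi'(a)+\psi'(b)>0$ (the trigamma function is positive), so its maximum is attained at an endpoint, namely $\beta=0$ or $\alpha=0$. Comparing the two vertices reduces to comparing $\Gamma(2-\mu+\alpha_1)/\Gamma(1+\alpha_1)$ with $\Gamma(2-\mu+\beta_1)/\Gamma(1+\beta_1)$; since $z\mapsto\Gamma(z+1-\mu)/\Gamma(z)$ is nondecreasing for $1-\mu\ge0$ (its logarithmic derivative $\psi(z+1-\mu)-\psi(z)$ is nonnegative), the vertex $\beta=0$ is optimal whenever $\alpha_1\ge\beta_1$. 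Thus for $\alpha_1\ge\beta_1$ the problem collapses to the one-dimensional minimization of
\[
\Phi(\mu)=\frac{\mu+1}{B(2-\mu+\alpha_1,\,1+\beta_1)},\qquad \mu\in[0,1].
\]

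The main obstacle is this last minimization, which is not monotone (for instance $\Phi$ is a downward parabola when $\beta_1=0$), so I would instead establish the sharp inequality $\Phi(0)\le\Phi(\mu)$ directly. Setting $D(\mu)=B(2-\mu+\alpha_1,1+\beta_1)=\int_0^1 t^{-\mu}w(t)\,dt$ with $w(t)=t^{1+\alpha_1}(1-t)^{\beta_1}$, the desired inequality is equivalent to $D(\mu)\le(1+\mu)D(0)$. The key step is the elementary convexity bound
\[
t^{-\mu}=\exp\!\big(\mu\ln(1/t)\big)\le(1-\mu)+\frac{\mu}{t},\qquad t\in(0,1],\ \mu\in[0,1],
\]
obtained by applying convexity of the exponential to the convex combination $\mu\ln(1/t)+(1-\mu)\cdot 0$. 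Integrating against $w$ gives $D(\mu)\le(1-\mu)D(0)+\mu\int_0^1 t^{-1}w(t)\,dt$, and a direct Beta-function computation yields
\[
\frac{\int_0^1 t^{-1}w(t)\,dt}{D(0)}=\frac{B(1+\alpha_1,1+\beta_1)}{B(2+\alpha_1,1+\beta_1)}=\frac{2+\alpha_1+\beta_1}{1+\alpha_1}\le 2,
\]
the last inequality being exactly $\beta_1\le\alpha_1$. Hence $D(\mu)\le\big((1-\mu)+2\mu\big)D(0)=(1+\mu)D(0)$, so $\Phi$ is minimized at $\mu=0$.

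Collecting the three reductions, for $\alpha_1\ge\beta_1$ the minimizer is $\mu=0$, $\beta=0$, $\alpha=1-\mu=1$, that is $(\mu,\alpha,\beta)=(0,1,0)$. Interchanging the roles of $(\alpha,\alpha_1)$ and $(\beta,\beta_1)$ throughout yields the symmetric conclusion $(\mu,\alpha,\beta)=(0,0,1)$ when $\beta_1\ge\alpha_1$, which completes the argument.
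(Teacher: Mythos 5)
Your proof is correct. For the reduction in the $(\alpha,\beta)$-directions you follow essentially the paper's route: along $\alpha+\beta=\mathrm{const}$ the sum of the Beta arguments is fixed, $\log\Gamma(a)+\log\Gamma(b)$ is convex by positivity of the trigamma function, so the maximum of $B(a,b)$ sits at a vertex, and the vertex comparison reduces to the sign of $\alpha_1-\beta_1$ (the paper does this at $\mu=0$ via $\Gamma(x+1)=x\,\Gamma(x)$; your monotonicity of $z\mapsto\Gamma(z+1-\mu)/\Gamma(z)$ is the natural generalization needed because you keep $\mu$ free). Where you genuinely diverge --- and in fact improve on the paper --- is the $\mu$-direction. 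The paper simply asserts that the factor $(\mu+1)$ in \eqref{constantCmualphabeta} forces $\mu=0$, ignoring that under the constraint $\mu+\alpha+\beta=1$ increasing $\mu$ shrinks $\alpha+\beta$, enlarges the Beta integral and hence \emph{decreases} $H_{\alpha,\beta}$, so there is a real trade-off; as you observe, for $\beta_1=0$ one gets $\Phi(\mu)=(\mu+1)(2-\mu+\alpha_1)$, which is not monotone on $[0,1]$. Your convexity bound $t^{-\mu}\le(1-\mu)+\mu/t$, integrated against $w(t)=t^{1+\alpha_1}(1-t)^{\beta_1}$ and combined with the exact ratio $(2+\alpha_1+\beta_1)/(1+\alpha_1)\le 2$ (which is precisely $\beta_1\le\alpha_1$), gives a clean proof that $\mu=0$ minimizes $\Phi$ along the edge $\beta=0$ --- a step the paper's argument leaves as a gap. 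The nested optimization (inner over $(\alpha,\beta)$ for fixed $\mu$, outer over $\mu$) is legitimately organized, and the symmetric case $\beta_1\ge\alpha_1$ follows by exchanging roles, so the argument is complete.
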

\begin{proof}
\noindent
From the bound~\eqref{constantCmualphabeta}, it is clear that we have to put $\mu=0$, in order to minimize the constant $\mathcal{C}_{\mu,\alpha,\beta}$. To obtain the thesis, we aim at determining the value of the parameters $\alpha$ and $\beta$, with $\alpha+\beta=1$, that minimizes the constant, i.e. that maximizes the integral
\begin{eqnarray*}
I(\alpha,\beta)&=&\int_0^1 t^{\alpha+\alpha_1}(1-t)^{\beta+\beta_1} d t=B\left(\alpha+\alpha_1+1,\beta+\beta_1+1\right)\\ &=&\frac{\Gamma\left(\alpha+\alpha_1+1\right)\Gamma\left(\beta+\beta_1+1\right)}{\Gamma\left(\alpha+\beta+\alpha_1+\beta_1+2\right)},
\end{eqnarray*}
\noindent
where $B(\cdot,\cdot)$ and $\Gamma(\cdot)$ are the beta and gamma functions, respectively (see~\cite{Abramowitz:1948:HOM}).
Since $\alpha+\beta=1$, we can write
\begin{equation*}
I(\alpha,1-\alpha)=\frac{\Gamma\left(\alpha+\alpha_1+1\right)\Gamma\left(\beta_1-\alpha+2\right)}{\Gamma\left(\alpha_1+\beta_1+3\right)}. 
\end{equation*}
\noindent
In the above relationship, since the denominator is constant, we maximize the univariate function $F:[0,1]\rightarrow\mathbf{R}$ such that
\begin{equation*}
F(\alpha)=\Gamma\left(\alpha+\alpha_1+1\right)\Gamma\left(\beta_1-\alpha+2\right).
\end{equation*}
\noindent
At this stage, we introduce the function
\begin{equation*}
   Z(\alpha):=\ln \left(F(\alpha)\right)=\ln\left(\Gamma\left(\alpha+\alpha_1+1\right)\right)+\ln\left(\Gamma\left(\beta_1-\alpha+2\right)\right)
\end{equation*}
\noindent
and we compute
\begin{equation*}
    Z^{\prime}(\alpha)=\frac{d}{d\alpha}\ln\left(\Gamma(\alpha)\right)=\Psi\left(\alpha+\alpha_1+1\right)-\Psi\left(\beta_1-\alpha+2\right),
\end{equation*}
\noindent
where $\Psi(\cdot)$ denotes the digamma function (see~\cite{Abramowitz:1948:HOM}). Setting $Z^{\prime}(\alpha)=0$ yields
\begin{equation*}
\Psi\left(\alpha+\alpha_1+1\right)=\Psi\left(\beta_1-\alpha+2\right).    
\end{equation*}
\noindent
Since the digamma function is strictly increasing, it follows that
\begin{equation*}
\alpha+\alpha_1+1=\beta_1-\alpha+2,    
\end{equation*}
\noindent
which leads to
\begin{equation*}
\alpha=\frac{\beta_1-\alpha_1+1}{2}.  
\end{equation*}
\noindent
Calculating the second derivative $Z^{\prime\prime}(\alpha)$ and using the polygamma function (see~\cite{Abramowitz:1948:HOM}), we find that the above value of $\alpha$ is a minimum for $Z(\alpha)$. 
Therefore, the maximum must be assumed at one of the endpoints. In particular, if $\alpha=0$ (and $\beta=1$), then
\begin{equation*}
F(0)=\Gamma\left(\alpha_1+1\right)\Gamma\left(\beta_1+2\right)=\left(\beta_1+1\right)\Gamma\left(\alpha_1+1\right)\Gamma\left(\beta_1+1\right). 
\end{equation*}
\noindent
On the other hand, if $\alpha=1$ (and $\beta=0$), we obtain
\begin{equation*}
F(1)={\Gamma\left(\alpha_1+2\right)\Gamma\left(\beta_1+1\right)}={\left(\alpha_1+1\right)\Gamma\left(\alpha_1+1\right)\Gamma\left(\beta_1+1\right)}. 
\end{equation*}
\noindent
Comparing these two values and simplify, the result follows.
\end{proof}\\

\noindent
Unfortunately, for certain sets of weighted enrichment functions, the explicit computation of the value of $H_{\alpha,\beta}$ is not straightforward. Therefore, in the next result, we provide an estimate of this constant for the family of weighted enrichment functions introduced in Theorem~\ref{thmnew} and determine an appropriate choice for the parameters $\alpha$ and $\beta$. 
\begin{corollary}
\noindent
For the family of weighted enrichment functions introduced in Theorem~\ref{thmnew}, the values
\begin{equation*}
(\mu,\alpha,\beta)=(0,1,0) 
\end{equation*}
or 
\begin{equation*}
(\mu,\alpha,\beta)=(0,0,1), 
\end{equation*}
provide a \textit{quasi-optimal} constant $\mathcal{C}_{\mu,\alpha,\beta}$.
\end{corollary}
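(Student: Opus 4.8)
The plan is to minimize the constant $\mathcal{C}_{\mu,\alpha,\beta}$ from~\eqref{constantCmualphabeta} by following the same two-stage reasoning used in Theorem~\ref{thmimss}: first fix $\mu$, then optimize the pair $(\alpha,\beta)$. First I would observe that the quantity $H_{\alpha,\beta}$, defined through~\eqref{Hmu}--\eqref{altra}, is \emph{independent} of $\mu$: by~\eqref{weig} the restriction of $\omega^{E}_{\mu,\alpha,\beta}$ to each edge equals the Jacobi weight $\omega_{\alpha,\beta}$ regardless of $\mu$, so $\mu$ enters $\mathcal{C}_{\mu,\alpha,\beta}$ only through the monotone prefactor $(\mu+1)$ multiplying the nonnegative term $\tfrac{9K^2}{2}H_{\alpha,\beta}$. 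Exactly as in Theorem~\ref{thmimss}, the smallest admissible value $\mu=0$ is then selected, and under the normalization $\mu+\alpha+\beta=1$ inherited from Theorem~\ref{impthm} this forces $\alpha+\beta=1$. It remains to choose $\alpha,\beta\ge 0$ with $\alpha+\beta=1$ so as to minimize $H_{\alpha,\beta}=\max_i H_{i,\alpha,\beta}$, equivalently to make each integral $\mathcal{I}_i(\alpha,\beta):=\int_0^1\omega_{\alpha,\beta}(t)f_{2i-1}(t)f_{2i}(1-t)\,dt$ as large as possible in absolute value.

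Unlike the situation in Theorem~\ref{thmimss}, where $f_{2i-1}(t)f_{2i}(1-t)=t^{\alpha_1}(1-t)^{\beta_1}$ turns $\mathcal{I}_i$ into a Beta function that is maximized exactly, here the $f_\ell$ are arbitrary and no closed form is available, so only an \emph{estimate} of $H_{\alpha,\beta}$ is possible. The key device I would use is the log-linearity of the Jacobi weight along the segment $\alpha+\beta=1$: for every $t\in(0,1)$,
\[
\alpha\ln t+\beta\ln(1-t)\le\max\{\ln t,\ln(1-t)\}=\ln\max\{t,1-t\},
\]
whence $\omega_{\alpha,\beta}(t)=t^\alpha(1-t)^\beta\le\max\{t,1-t\}\le t+(1-t)$. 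Integrating this pointwise bound against $|f_{2i-1}(t)f_{2i}(1-t)|$ — which is legitimate because, by the hypotheses of Theorem~\ref{thmnew}, each factor has constant sign — yields, for every $i$ and every admissible $(\alpha,\beta)$,
\[
|\mathcal{I}_i(\alpha,\beta)|\le|\mathcal{I}_i(1,0)|+|\mathcal{I}_i(0,1)|\le 2\max\{|\mathcal{I}_i(1,0)|,|\mathcal{I}_i(0,1)|\}.
\]
Passing to reciprocals gives $\min\{H_{i,1,0},H_{i,0,1}\}\le 2\,H_{i,\alpha,\beta}$: no interior choice of $(\alpha,\beta)$ can shrink the $i$-th reciprocal by more than a factor of two relative to the better of the two endpoints $(\alpha,\beta)=(1,0)$ and $(\alpha,\beta)=(0,1)$. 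Since $H_{\alpha,\beta}$, and hence $\mathcal{C}_{\mu,\alpha,\beta}$, is built monotonically from these reciprocals, the better of the triples $(0,1,0)$ and $(0,0,1)$ produces a value of $\mathcal{C}_{\mu,\alpha,\beta}$ within a controlled factor of the infimum over all admissible parameters — which is precisely the asserted quasi-optimality.

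The step I expect to be delicate is the passage from the per-index estimate to the global constant $H_{\alpha,\beta}=\max_i H_{i,\alpha,\beta}$. The factor-two bound holds for each fixed $i$, but the endpoint realizing $\min\{H_{i,1,0},H_{i,0,1}\}$ may depend on $i$, whereas the enrichment admits only a single global choice of $(\alpha,\beta)$; consequently $\min\{\max_i H_{i,1,0},\max_i H_{i,0,1}\}$ need not coincide with $\max_i\min\{H_{i,1,0},H_{i,0,1}\}$. This mismatch is exactly why the conclusion is phrased as \emph{quasi}-optimality rather than exact optimality: the pointwise domination by $\max\{t,1-t\}$ guarantees that the endpoints are never far from the optimum, while the max-over-$i$ interchange obstructs any claim that they are the exact minimizers in the general case. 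To make the implicit constant explicit one would track the factor $2$ (or, more sharply, the ratio of $\int_0^1\max\{t,1-t\}\,|f_{2i-1}(t)f_{2i}(1-t)|\,dt$ to $\max\{|\mathcal{I}_i(1,0)|,|\mathcal{I}_i(0,1)|\}$) through $H_{\alpha,\beta}$ and into~\eqref{constantCmualphabeta}.
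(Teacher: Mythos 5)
Your argument is correct and reaches the same conclusion, but by a genuinely different route from the paper's. The paper makes the same initial reductions ($\mu=0$ because $H_{\alpha,\beta}$ does not depend on $\mu$ and the prefactor $(\mu+1)$ is increasing; then $\alpha+\beta=1$), but it then estimates the relevant integral by $\bigl|\int_0^1 t^{\alpha}(1-t)^{1-\alpha}f_{2i-1}(t)f_{2i}(1-t)\,dt\bigr|\le K^2 B(\alpha+1,2-\alpha)$ with $K$ as in~\eqref{boundvarphi}, and invokes Theorem~\ref{thmimss} with $\alpha_1=\beta_1=0$ to conclude that this Beta-function \emph{majorant} is maximized at the endpoints $(\alpha,\beta)=(1,0)$ and $(0,1)$; the quasi-optimality is thus asserted only in the sense that the endpoints maximize an $i$-independent upper bound of the integral, which says nothing a priori about the integral itself. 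Your replacement of this step by the pointwise inequality $t^{\alpha}(1-t)^{\beta}\le t+(1-t)$ on the segment $\alpha+\beta=1$, integrated against the constant-sign product $f_{2i-1}(t)f_{2i}(1-t)$, is logically stronger: it yields the explicit per-index guarantee $\min\{H_{i,1,0},H_{i,0,1}\}\le 2\,H_{i,\alpha,\beta}$, giving "quasi-optimal" a concrete quantitative meaning that the paper's proof does not supply. The price is exactly the obstruction you flag: the preferred endpoint may vary with $i$, so the factor-two bound does not transfer automatically to $H_{\alpha,\beta}=\max_i H_{i,\alpha,\beta}$, whereas the paper's $i$-independent majorant sidesteps this at the cost of proving less. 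Both arguments support only a heuristic reading of the corollary; yours makes the heuristic quantitative where it can be, and your diagnosis of why exact optimality is out of reach is accurate.
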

\begin{proof}
From the bound~\eqref{constantCmualphabeta}, it is clear that we have to put $\mu=0$, in order to minimize the constant $\mathcal{C}_{\mu,\alpha,\beta}$. Now, we aim to determinig the coefficients $\alpha$ and $\beta$ satisfying
\begin{equation}\label{cond111}
     \alpha+\beta=1
 \end{equation}
 which minimize the value $H_{\alpha,\beta}$, or equivalently which maximize the integral
 \begin{eqnarray*}
     I(\alpha,1-\alpha)&=&\left\lvert\int_0^1 t^{\alpha}(1-t)^{1-\alpha} f_{2i-1}\left(t\right)f_{2i}\left(1-t\right) d t\right\rvert \le\notag\\
     &\le& \int_0^1 \left\lvert t^{\alpha}(1-t)^{1-\alpha} f_{2i-1}\left(t\right)f_{2i}\left(1-t\right) \right\rvert d t \notag\le\\
     &\le& K^2 \int_0^1  t^{\alpha}(1-t)^{1-\alpha} d t=K^2 B(\alpha+1,2-\alpha),
 \end{eqnarray*}
where $K$ is defined in~\eqref{boundvarphi}. The result follows by Theorem~\ref{thmimss} applied to the case $\alpha_1=\beta_1=0$.
\end{proof}\\

\noindent
Finally, we point out that Lemma~\ref{lemmaimps} allows us to derive the following bound for the approximation error.
\begin{theorem}
\noindent
For any $E\in\mathcal{T}_{h}(\Omega)$, let $\mathcal{F}=(E,V_{h}(E),\Xi_{E})$ be the enriched finite element relative to a set of admissible enrichment functions. Then, for any $g\in H^{2}(\Omega)$, we have
\begin{equation*}
        \left\| g-\Pi^{\mathrm{enr}}_h[g]\right\|_{H^{\ell}(\Omega)}\le \mathcal{C}h^{2-\ell}\|g\|_{H^{2}(\Omega)}, \qquad \ell=0,1
\end{equation*}
where $\mathcal{C}>0$ is a constant independent of $h$.
\end{theorem}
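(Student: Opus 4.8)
The plan is to reduce the global estimate to a reference triangle through an affine map and then invoke the Bramble--Hilbert lemma, the essential input being the polynomial reproduction established in Lemma~\ref{lemmaimps}. First I would fix a reference triangle $\widehat{E}$ and, for each $E\in\mathcal{T}_h(\Omega)$, write the affine bijection $F_E(\widehat{\mathbf{x}})=B_E\widehat{\mathbf{x}}+\mathbf{b}_E$ carrying $\widehat{E}$ onto $E$. The decisive structural observation is that the enriched finite element family is \emph{affine-equivalent}: since the barycentric coordinates obey $\lambda_k^{E}\!\left(F_E(\widehat{\mathbf{x}})\right)=\widehat{\lambda}_k(\widehat{\mathbf{x}})$, and every admissible enrichment function used here, together with the weight $\omega^{E}_{\mu,\alpha,\beta}$, is obtained by composing fixed univariate functions with these coordinates, each $\widetilde{\lambda}_i^{E}$ pulls back under $F_E$ to one and the same reference enrichment function $\widehat{\widetilde{\lambda}}_i$. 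Consequently the local operator satisfies $\pi_h^{\mathrm{enr}}[g]\circ F_E=\widehat{\pi}^{\mathrm{enr}}\!\left[g\circ F_E\right]$, where $\widehat{\pi}^{\mathrm{enr}}$ is a single, $E$-independent projector on $\widehat{E}$.

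Next I would show that $\widehat{\pi}^{\mathrm{enr}}:H^2(\widehat{E})\to H^\ell(\widehat{E})$ is bounded, with norm independent of $E$. This holds because the reference basis functions $\widehat{\varphi}_1,\dots,\widehat{\varphi}_6$ are fixed elements of $H^\ell(\widehat{E})$, while the defining functionals are bounded on $H^2(\widehat{E})$: the vertex evaluations $\mathcal{L}_i$ are controlled through the Sobolev embedding $H^2(\widehat{E})\hookrightarrow C^0(\widehat{E})$, valid in two dimensions, and the edge averages $\widetilde{\mathcal{L}}_i$ through the trace theorem. Since $\widehat{\pi}^{\mathrm{enr}}$ reproduces $P^1(\widehat{E})$ by Lemma~\ref{lemmaimps}, subtracting from any $\widehat{g}\in H^2(\widehat{E})$ a suitable linear polynomial and applying the Bramble--Hilbert lemma yields
\begin{equation*}
\left\|\widehat{g}-\widehat{\pi}^{\mathrm{enr}}[\widehat{g}]\right\|_{H^\ell(\widehat{E})}\le \widehat{C}\,\left|\widehat{g}\right|_{H^2(\widehat{E})},
\end{equation*}
with $\widehat{C}$ depending only on $\widehat{E}$, on $\ell$, and on the fixed reference data.

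Finally I would transport this back to $E$. For each $0\le m\le\ell$, the reference estimate combined with the standard affine scaling bounds $\|B_E\|\le C h_E$ and $\|B_E^{-1}\|\le C\varrho_E^{-1}$, together with the transformation rules for Sobolev seminorms (the Jacobian factors cancel), gives
\begin{equation*}
\left|g-\pi_h^{\mathrm{enr}}[g]\right|_{H^m(E)}\le C\,\|B_E^{-1}\|^{m}\,\|B_E\|^{2}\,\left|g\right|_{H^2(E)}\le C\,\varrho_E^{-m}h_E^{2}\,\left|g\right|_{H^2(E)}.
\end{equation*}
The mesh regularity $h_E/\varrho_E\le\delta$ yields $\varrho_E^{-m}\le\delta^m h_E^{-m}$, whence $|g-\pi_h^{\mathrm{enr}}[g]|_{H^m(E)}\le C h_E^{2-m}|g|_{H^2(E)}$; squaring, summing over $m=0,\dots,\ell$ and over all $E\in\mathcal{T}_h(\Omega)$, and using $\sum_{E}|g|_{H^2(E)}^2=|g|_{H^2(\Omega)}^2\le\|g\|_{H^2(\Omega)}^2$ with $h_E\le h$, produces the claimed global bound. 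The main obstacle is the first step: one must check carefully that the admissible enrichment functions are genuinely affine-equivalent, so that the reference operator, and hence the Bramble--Hilbert constant, is truly independent of the element, and that the vertex-evaluation functionals are bounded on $H^2$ — which is precisely where the two-dimensional Sobolev embedding enters.
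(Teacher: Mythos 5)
Your proposal is correct and follows exactly the route the paper intends: the paper states this theorem without proof, merely remarking that it is derived from Lemma~\ref{lemmaimps}, i.e.\ from the $P^1$-reproduction of $\pi_h^{\mathrm{enr}}$, which is precisely the key input of your Bramble--Hilbert plus affine-scaling argument. Your closing caveat is well placed: the uniformity of the constant over the elements does hinge on the enriched element being affine-equivalent (enrichment functions built by composing fixed univariate profiles with the barycentric coordinates, and lying in $H^1$ of the element), which holds for all the families the paper actually uses but is not automatic for an arbitrary ``set of admissible enrichment functions'' as literally stated.
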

\noindent
Using the above theorem and standard arguments of elliptic operators, it is possible to obtain the following result.
\begin{theorem}
\noindent
Let $u\in V$be the exact solution of the variational problem \eqref{weak_problem} and $u_{h}\in V_{h}$ its approximate solution using the enriched linear lagrangian finite element method. If $u\in H^{2}(\Omega)$, then the following a priori error estimate holds
\begin{equation*}
        \left\| u-u_h\right\|_{H^{1}(\Omega)}\le \frac{\mathcal{C}}{1+C_{\Omega}^{2}}h\|u\|_{H^{2}(\Omega)}, 
\end{equation*}
where $\mathcal{C}>0$ is a constant independent of $h$ and $u$.
\end{theorem}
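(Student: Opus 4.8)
\noindent
The plan is to combine the quasi-optimality (Céa-type) property of the Galerkin solution with the $H^1$ interpolation estimate established in the preceding theorem. First I would record the Galerkin orthogonality relation: since $u$ satisfies \eqref{weak_problem} for every $v\in V$ and $u_h$ satisfies \eqref{galerkin_weak_problem} for every $v_h\in V_h\subset V$, subtracting the two identities tested against functions of $V_h$ yields
\[
a\left(u-u_h,v_h\right)=0, \qquad \forall v_h\in V_h.
\]

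\noindent
Next I would exploit the $\alpha$-coercivity and $\gamma$-continuity of the bilinear form $a(\cdot,\cdot)$ recalled in Section~\ref{sec2}, with $\alpha=(1+C_\Omega^2)^{-1}$ and $\gamma=1$. For an arbitrary $v_h\in V_h$, inserting $v_h$ through the orthogonality relation gives
\[
\alpha\left\lVert u-u_h\right\rVert_V^2\le a\left(u-u_h,u-u_h\right)=a\left(u-u_h,u-v_h\right)\le \gamma\left\lVert u-u_h\right\rVert_V\left\lVert u-v_h\right\rVert_V,
\]
and dividing by $\left\lVert u-u_h\right\rVert_V$ produces the best-approximation bound $\left\lVert u-u_h\right\rVert_V\le \frac{\gamma}{\alpha}\inf_{v_h\in V_h}\left\lVert u-v_h\right\rVert_V$, where the ratio $\gamma/\alpha$ is fixed by the coercivity and continuity constants.

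\noindent
It then remains to choose a concrete competitor $v_h$ and to bound the infimum. Since $\Omega\subset\mathbf{R}^2$ and $u\in H^2(\Omega)$, the Sobolev embedding $H^2(\Omega)\hookrightarrow C^0(\overline{\Omega})$ ensures that the global enriched projector $\Pi_h^{\mathrm{enr}}[u]$ is well defined. Taking $v_h=\Pi_h^{\mathrm{enr}}[u]$ and applying the previous theorem with $\ell=1$ gives
\[
\left\lVert u-\Pi_h^{\mathrm{enr}}[u]\right\rVert_{H^1(\Omega)}\le \mathcal{C}\,h\left\lVert u\right\rVert_{H^2(\Omega)},
\]
so that chaining this with the best-approximation estimate and collecting the (all $h$-independent) coercivity, continuity and interpolation constants into a single constant yields the claimed $O(h)$ bound in the $H^1$-norm.

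\noindent
The routine parts here are the Galerkin orthogonality and the Céa argument. The step that I expect to require genuine care is the admissibility of the interpolant $\Pi_h^{\mathrm{enr}}[u]$ as an element of $V_h$: one must verify that the element-wise enriched projections glue into a globally $H^1$-conforming function and that, because $u$ vanishes on $\Gamma$, all degrees of freedom attached to boundary vertices and boundary edges vanish, so that $\Pi_h^{\mathrm{enr}}[u]\in V_h\subset H_0^1(\Omega)$ is indeed a legitimate competitor in the infimum. Only once this conformity is established may the interpolation estimate of the previous theorem be inserted into the quasi-optimality bound, and the proof closes by renaming the resulting $h$-independent constant.
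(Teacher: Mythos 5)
Your proposal is correct and is precisely the argument the paper has in mind: the paper gives no written proof, stating only that the result follows from the preceding interpolation theorem and ``standard arguments of elliptic operators,'' which is exactly the Galerkin-orthogonality/C\'ea chain you carry out, including the appropriate care about $\Pi_h^{\mathrm{enr}}[u]$ being a conforming competitor in $V_h$. One small remark: your derivation naturally produces the quasi-optimality factor $\gamma/\alpha=1+C_\Omega^2$ \emph{multiplying} the interpolation bound, whereas the paper's display places $1+C_\Omega^2$ in the denominator; since $\mathcal{C}$ is a generic constant the two statements are interchangeable, but your form is the one the argument actually yields.
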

\section{Numerical results}\label{sec4}
\noindent
In this section, we perform some numerical experiments to demonstrate the effectiveness of our enriched finite element method, in both the cases of weighted and unweighted enrichment strategies.\\
In particular, we consider four Poisson problems and, for each of them, we compare the trends of the error in the energy semi-norm for the standard linear lagrangian finite element, denoted by
\begin{equation*}
\epsilon^{\mathrm{lin}}(u)=\sqrt{\int\limits_{\Omega} \left[\nabla \left(u-u_h^{\mathrm{lin}}\right)(\mathbf{x}) \right]^{2} d \mathbf{x}}
\end{equation*}
\noindent
with the error in the energy semi-norm for the enriched linear lagrangian finite element, denoted by
\begin{equation*}
\epsilon_{\mathcal{E}}^{\mathrm{enr}}(u)=\sqrt{\int\limits_{\Omega} \left[\nabla \left(u-u_h^{\mathrm{enr}}\right)(\mathbf{x}) \right]^{2} d \mathbf{x}}
\end{equation*}
\noindent
where $\mathcal{E}=\mathcal{E}_{10},\mathcal{E}_{11},\mathcal{E}_{12}$. To develop a convergence analysis, we start by choosing a coarse Friedrich-Keller triangulation (see~\cite{Knabner}) of the unit square $\Omega:=[0,1]^{2}$, made by 32 triangles as shown in the left plot of Fig.~\ref{triangles}. All successive refinements are obtained by halving each side of its elements.

\begin{figure}[ht!]
  \centering
\includegraphics[width=0.19\textwidth]{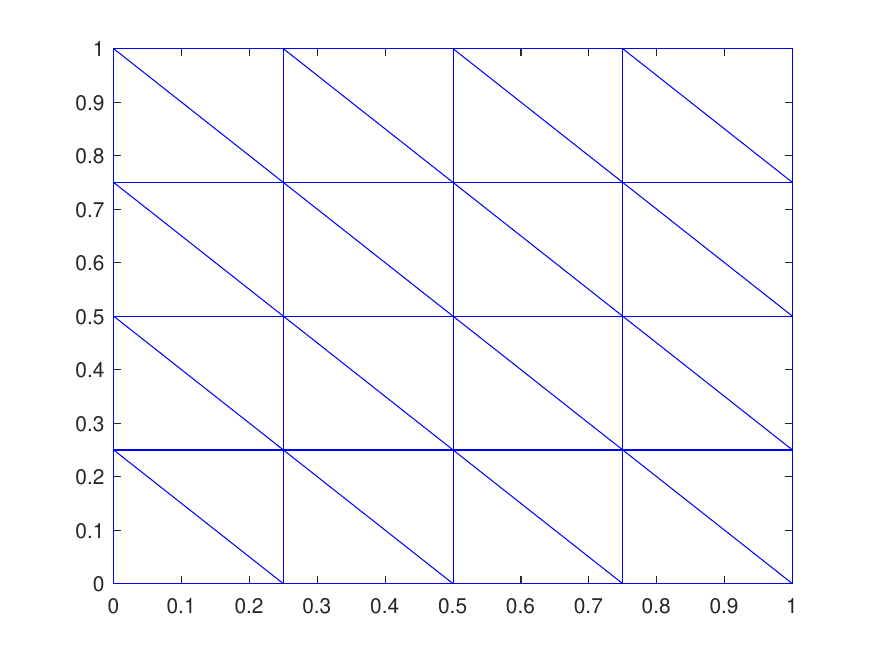} 
\includegraphics[width=0.19\textwidth]{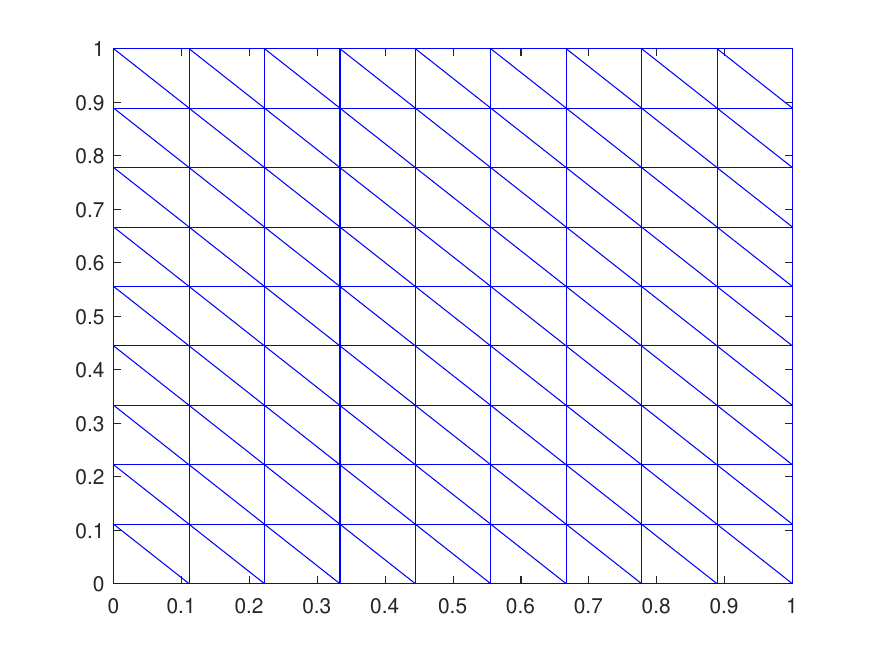} 
\includegraphics[width=0.19\textwidth]{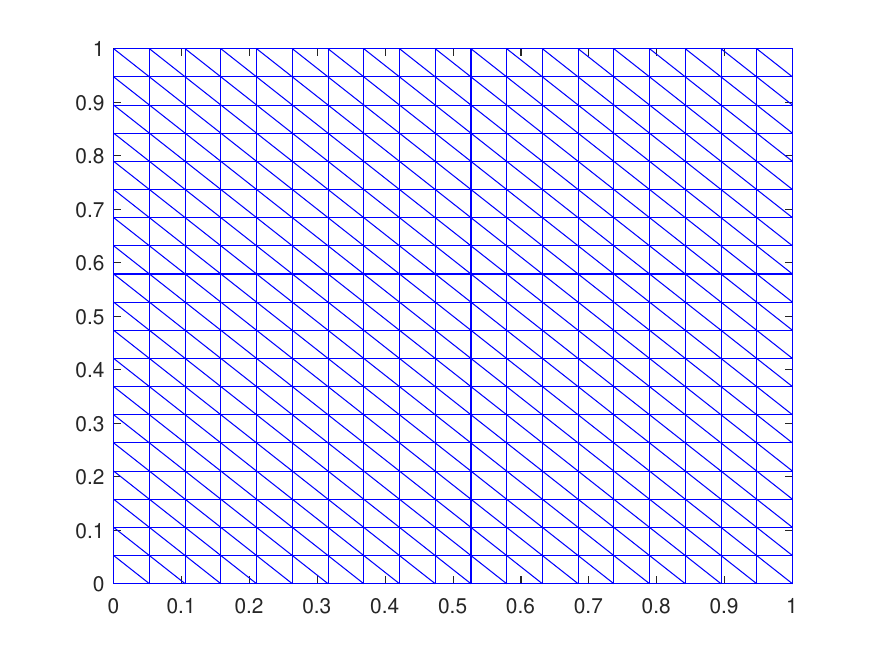} 
\includegraphics[width=0.19\textwidth]{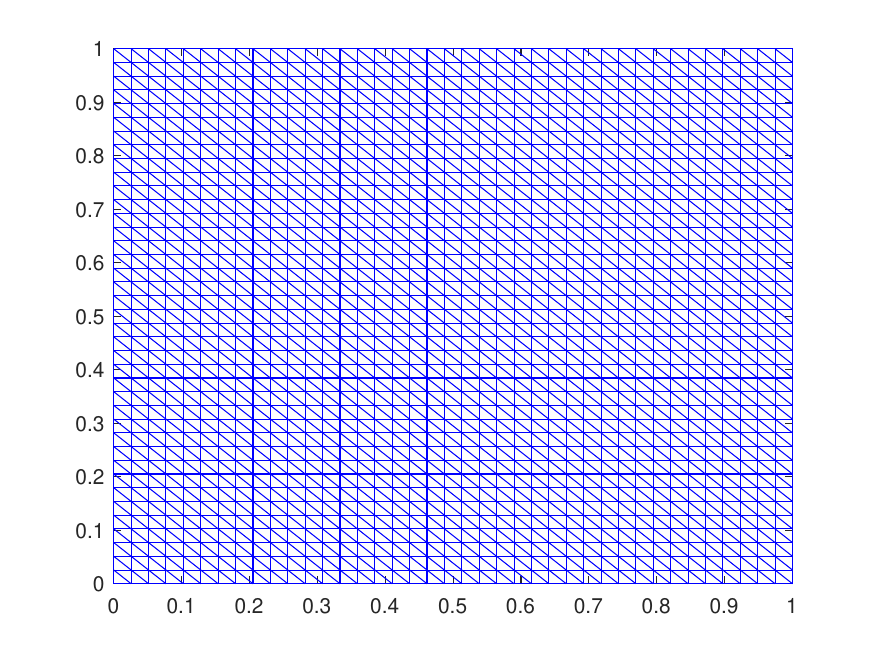} 
\includegraphics[width=0.19\textwidth]{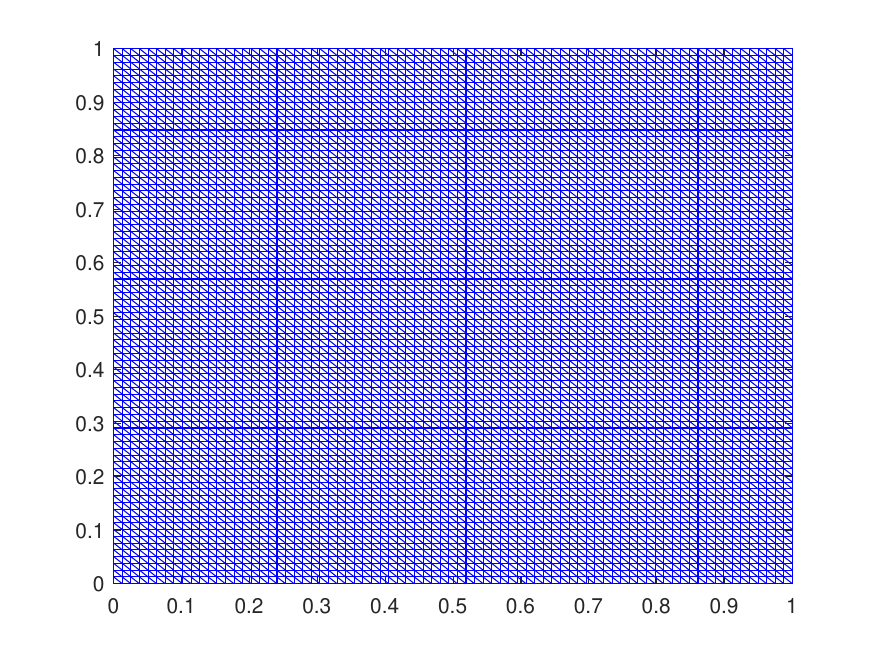} 
 \caption{Successive regular Friedrichs-Keller triangulations of the unit square $\Omega=[0,1]^{2}$.}
 \label{triangles}
\end{figure}
\noindent
First, we focus on the unweighted enrichment strategy, i.e. $\mu=\alpha=\beta=0$. Although the solutions to the considered problems exhibit different characteristics, the enrichment strategy that produces the most significant improvement is $\mathcal{E}_{10}$, while those provided by $\mathcal{E}_{11}$ and $\mathcal{E}_{12}$ perform comparably. We remark that all these enrichment strategies outperform the linear lagrangian finite element. In Fig.~\ref{condplot}, we display the trends of the condition numbers of the corresponding system matrices, showing that the condition numbers for the enriched finite elements based on $\mathcal{E}_{10}$ (denoted by $\kappa^{\mathrm{enr}}_{\mathcal{E}_{10}}$), $\mathcal{E}_{11}$ (denoted by $\kappa^{\mathrm{enr}}_{\mathcal{E}_{11}}$) and $\mathcal{E}_{12}$ (denoted by $\kappa^{\mathrm{enr}}_{\mathcal{E}_{12}}$) are comparable to those produced by the linear lagrangian finite element (denoted by $\kappa^{\mathrm{lin}}$).
To further illustrate the effectiveness of the proposed approach, in Fig.~\ref{plotrec} we reconstruct the solutions of the four Poisson problems using the enriched finite element based on $\mathcal{E}_{10}$ on the last three Friedrichs-Keller meshes of Fig.~\ref{triangles}.\\
Then, we set $\mu=\beta=0$, $\alpha=1$ 
and we consider the weight function $\omega_{\mu,\alpha,\beta}^{E}$ defined in~\eqref{defofh}.
In analogy to the unweighted approach, we consider the same four Poisson problems and compare the trends of the errors in the energy norm for the enriched finite elements based on the sets of admissible weighted enrichment functions $\mathcal{E}_{10}$, $\mathcal{E}_{11}$ and $\mathcal{E}_{12}$. The results of these experiments are shown in Fig.~\ref{Fig1w} and Fig.~\ref{Fig2w}.  From these plots, it is evident that the weighted strategy produces similar results compared to the unweighted case. As observed previously, the weighted enrichment strategy that yields the most significant improvement is $\mathcal{E}_{10}$, while those provided by $\mathcal{E}_{11}$ and $\mathcal{E}_{12}$ perform comparably.\\
All numerical computations have been performed on a PC with Intel\textsuperscript{\textregistered} Core\textsuperscript{TM} i7-7700 (3.60 GHz) by means of Matlab\textsuperscript{\textregistered} codes.\
All experiments were carried out using \texttt{MATLAB}. To achieve high-precision integral evaluations, we employed Gaussian quadrature with a suitably chosen number of quadrature points.
\subsubsection{Problem 1}
\noindent
We consider the following Laplace boundary-value problem
\begin{subequations}
  \begin{empheq}[left=\empheqlbrace]{align} &-\Delta u_1(\textbf{x})=f_1(\textbf{x})& &\textbf{x}\in \Omega\\
 &u_1(\textbf{x})=0& 	 					 &\textbf{x}\in\Gamma, 
  \end{empheq}
\end{subequations}
where the exact solution is given by 
\begin{equation*}
   u_1(\mathbf{x})=\sin(2\pi x_{1})\sin(2\pi x_{2}), 
   \end{equation*}
 and the source term is defined as
 \begin{equation*}
f_1(\mathbf{x})=8\pi^2\sin\left(2\pi x_{1}\right)\sin(2\pi x_{2}).   
\end{equation*}
The numerical results for this problem are presented in Fig.~\ref{Fig1} (left).

\subsubsection{Problem 2}
\noindent
We consider the following Laplace boundary-value problem
\begin{subequations}
  \begin{empheq}[left=\empheqlbrace]{align} &-\Delta u_2(\textbf{x})=f_2(\textbf{x})& &\textbf{x}\in\Omega\\
 &u_2(\textbf{x})=0& 	 					 &\textbf{x}\in\Gamma, 
  \end{empheq}
\end{subequations}
where the exact solution is given by 
\begin{equation*}
     u_2(\mathbf{x})=(e^{x_{1}(1-x_{1})}-1)\sin(2 \pi x_{2}),
   \end{equation*}
 and the source term is defined as
 \begin{equation*}
f_2(\mathbf{x})= 2 e^{-x_{1}(x_{1}-1)} \sin(2\pi x_{2}) + 4\pi^2 \sin(2\pi x_{2}) \left( e^{-x_{1}(x_{1}-1)} - 1 \right) - e^{-x_{1}(x_{1}-1)} \sin(2\pi x_{2})(2x_{1}-1)^2.  
\end{equation*}
The numerical results for this problem are presented in Fig.~\ref{Fig1} (right).

\subsubsection{Problem 3}
\noindent
We consider the following Laplace boundary-value problem
\begin{subequations}
  \begin{empheq}[left=\empheqlbrace]{align} &-\Delta u_3(\textbf{x})=f_3(\textbf{x})& &\textbf{x}\in\Omega\\
 &u_3(\textbf{x})=0& 	 					 &\textbf{x}\in\Gamma, 
  \end{empheq}
\end{subequations}
where the exact solution is given by 
\begin{equation*}
   u_3(\mathbf{x})=(e^{x_{1}(1-x_{1})}-1)(e^{x_{2}(1-x_{2})}-1),
\end{equation*}
and the source term is defined as
\begin{equation*}
\begin{split}
f_3(\mathbf{x}) =\;& 2e^{-x_{1}(x_{1}-1)}\left(e^{-x_{2}(x_{2}-1)}-1\right) 
+ 2e^{-x_{2}(x_{2}-1)}\left(e^{-x_{1}(x_{1}-1)}-1\right) \\
& - e^{-x_{1}(x_{1}-1)}\,(2x_{1}-1)^2\left(e^{-x_{2}(x_{2}-1)}-1\right)
- e^{-x_{2}(x_{2}-1)}\,(2x_{2}-1)^2\left(e^{-x_{1}(x_{1}-1)}-1\right).
\end{split}
\end{equation*}
The numerical results for this problem are presented in Fig.~\ref{Fig2} (left).

\subsubsection{Problem 4}
\noindent
We consider the following Laplace boundary-value problem
\begin{subequations}
  \begin{empheq}[left=\empheqlbrace]{align} &-\Delta u_4(\textbf{x})=f_4(\textbf{x})& &\textbf{x}\in\Omega\\
 &u_4(\textbf{x})=0& 	 					 &\textbf{x}\in\Gamma, 
  \end{empheq}
\end{subequations}
where the exact solution is given by 
\begin{equation*}
    u_4(\mathbf{x})=x_{1}x_{2}(1-x_{1})(1-x_{2}),
\end{equation*}
and the source term is defined as
\begin{equation*}
f_4(\mathbf{x}) =-2x_{1}(x_{1} - 1)-2x_{2}(x_{2} - 1).
\end{equation*}
The numerical results for this problem are presented in Fig.~\ref{Fig2} (right).

\begin{figure}
  \centering
\includegraphics[width=0.32\textwidth]{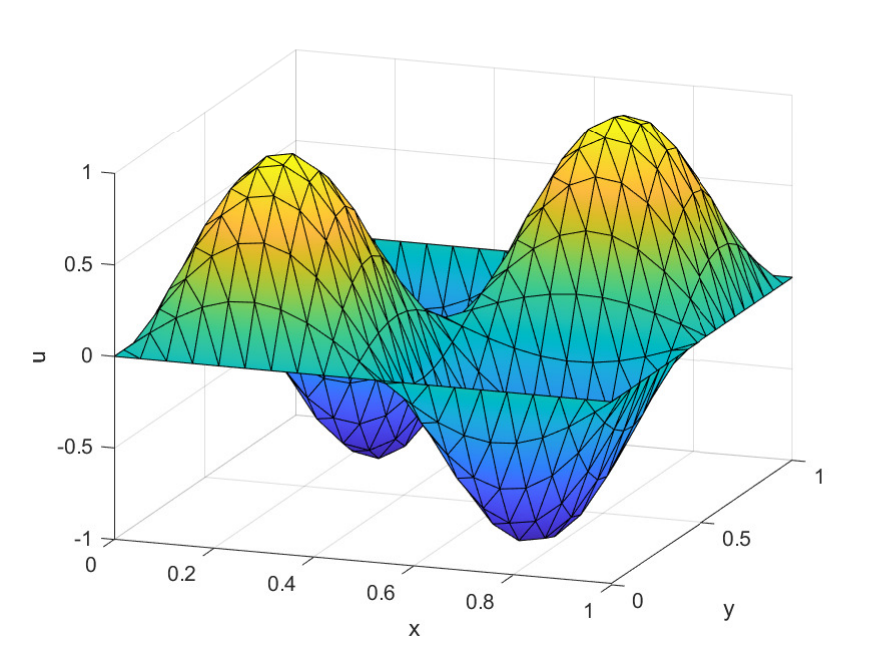} 
\includegraphics[width=0.32\textwidth]{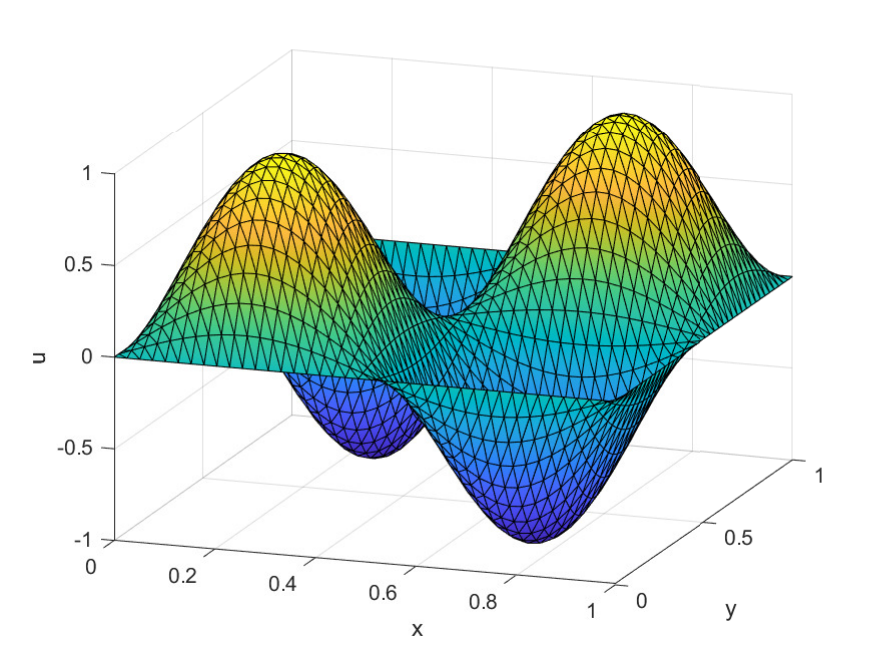} 
\includegraphics[width=0.32\textwidth]{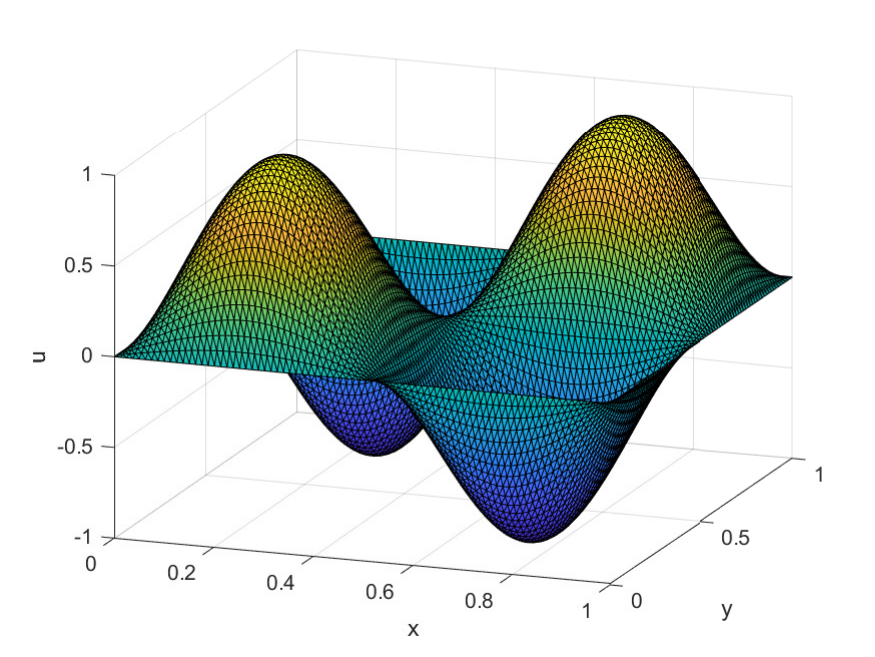}
\includegraphics[width=0.32\textwidth]{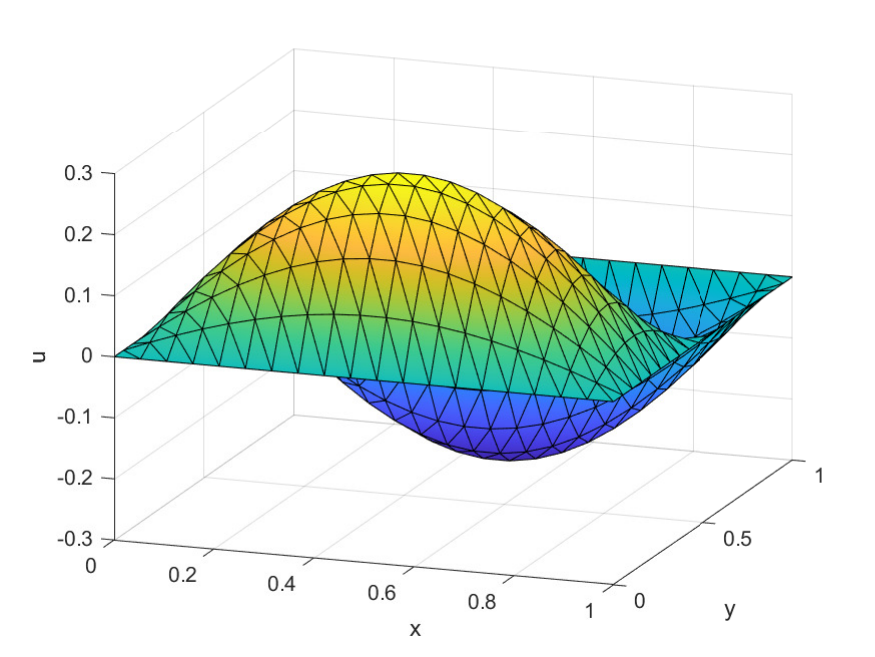} 
\includegraphics[width=0.32\textwidth]{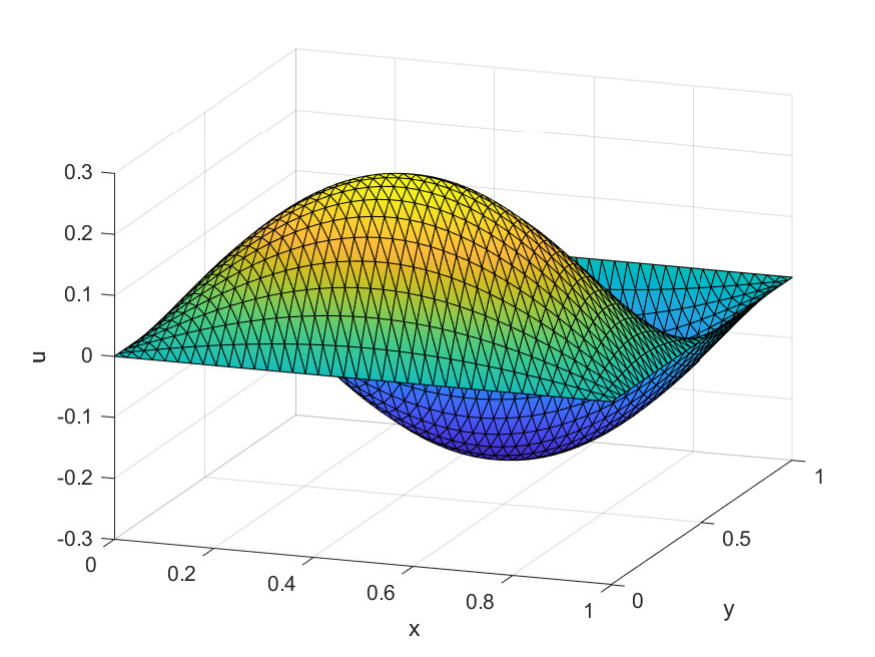} 
\includegraphics[width=0.32\textwidth]{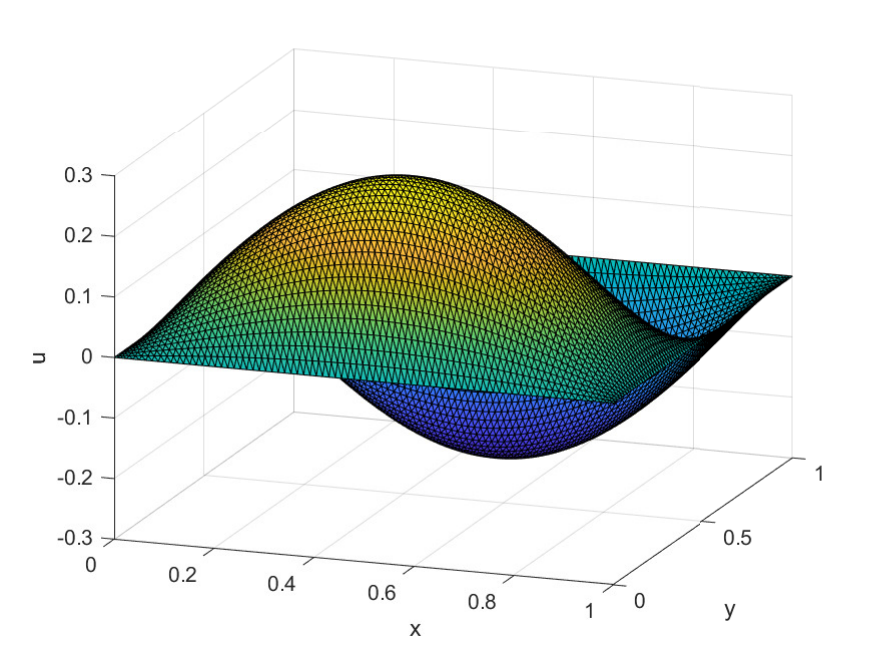}
\includegraphics[width=0.32\textwidth]{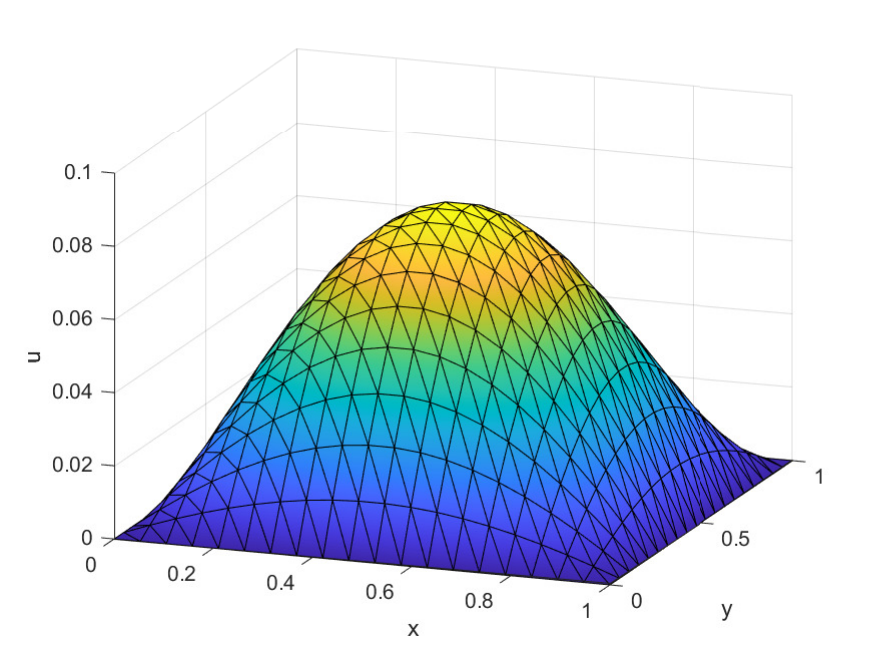} 
\includegraphics[width=0.32\textwidth]{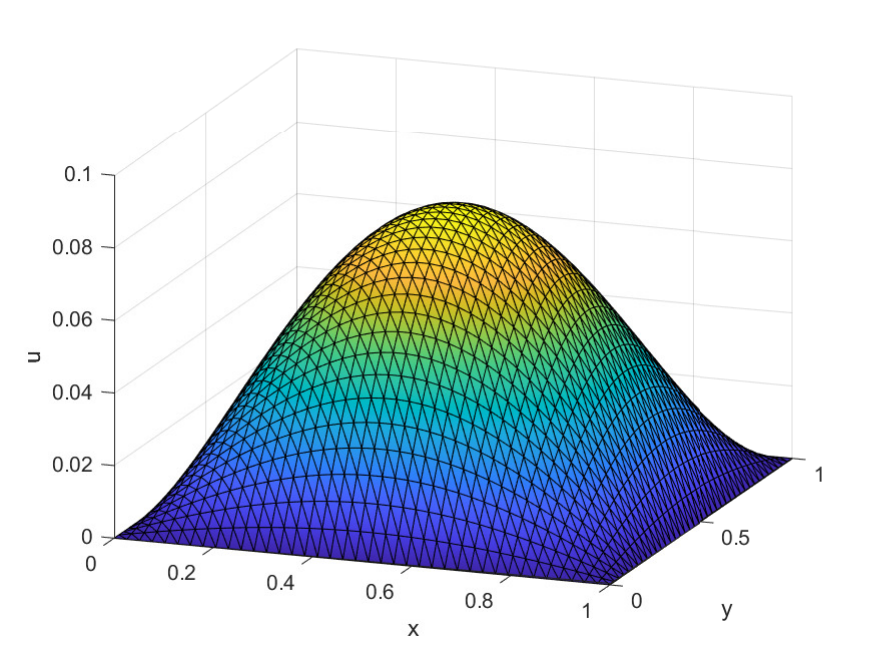} 
\includegraphics[width=0.32\textwidth]{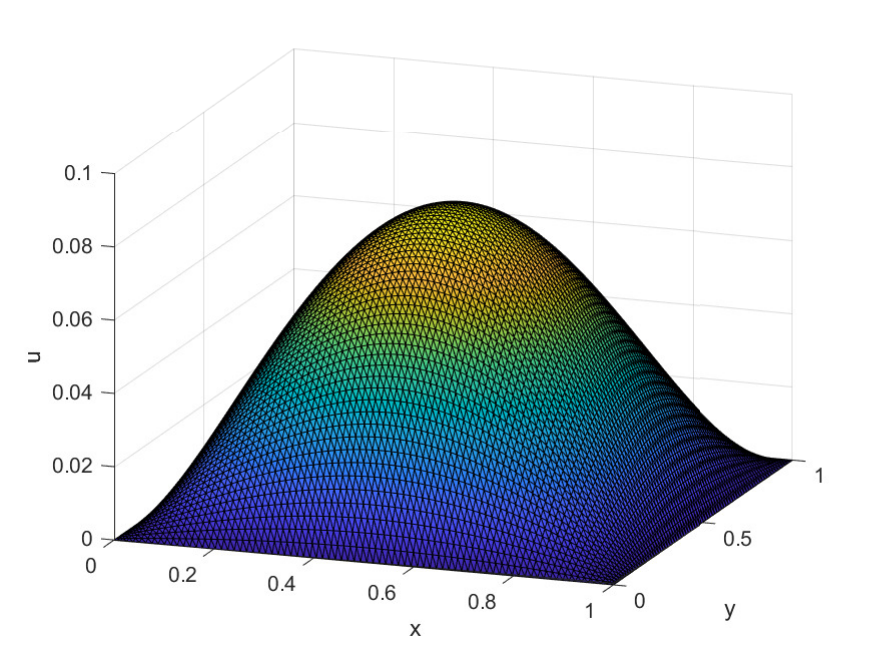}
\includegraphics[width=0.32\textwidth]{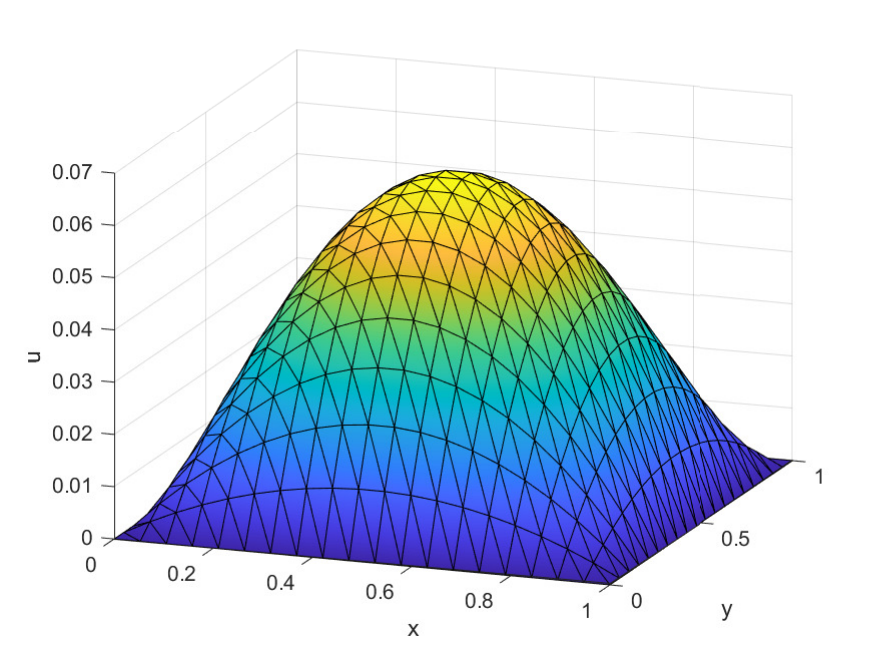} 
\includegraphics[width=0.32\textwidth]{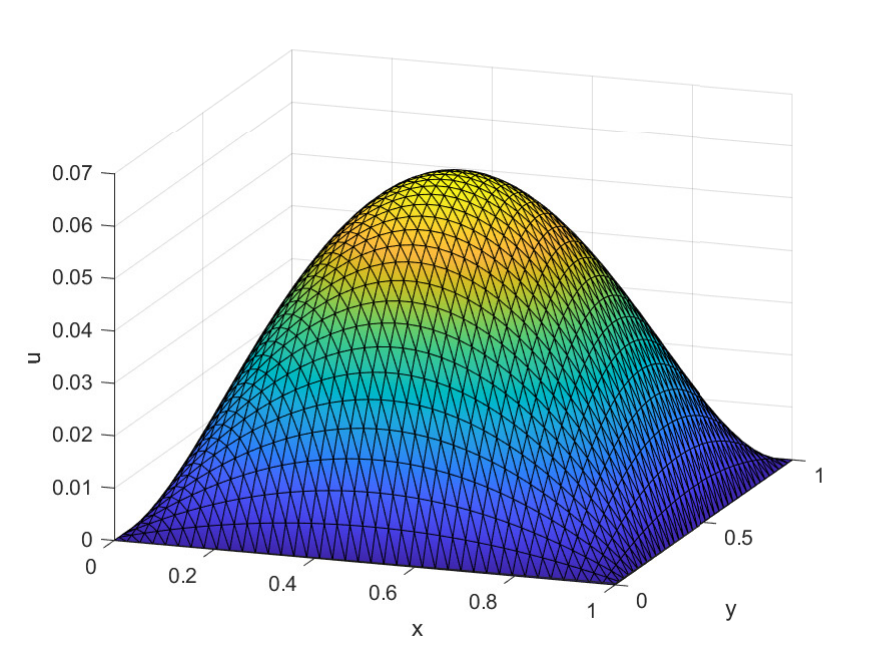} 
\includegraphics[width=0.32\textwidth]{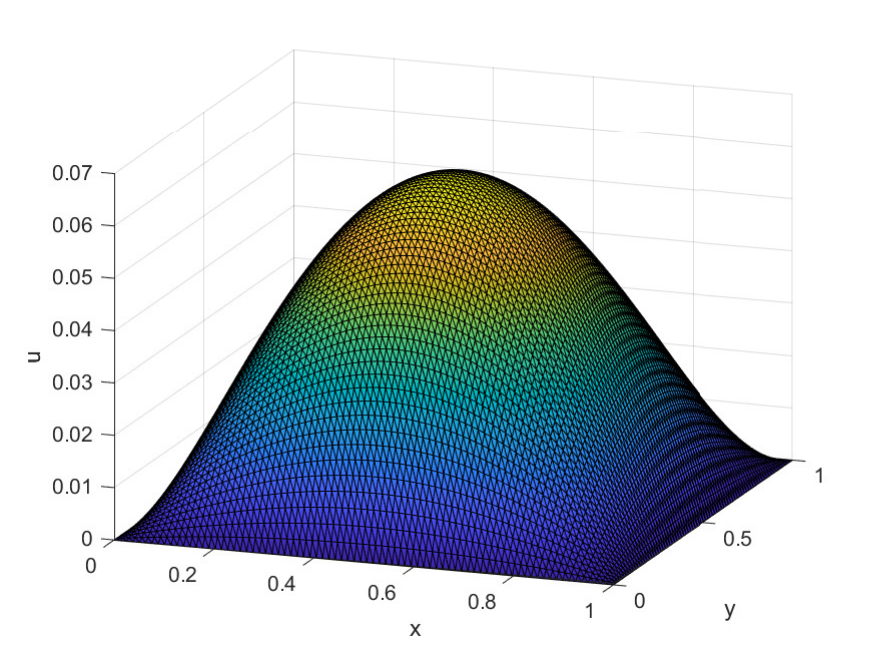}
 \caption{From top to bottom, plot of the reconstruction of the functions $u_i(\mathbf{x})$ using the enriched finite element method based on the set $\mathcal{E}_{10}$, with respect to  the third (left), fourth (center) and fifth (right), levels of refinement $i=1,2,3,4$.}
 \label{plotrec}
\end{figure}

\begin{figure}
  \centering
\includegraphics[width=0.49\textwidth]{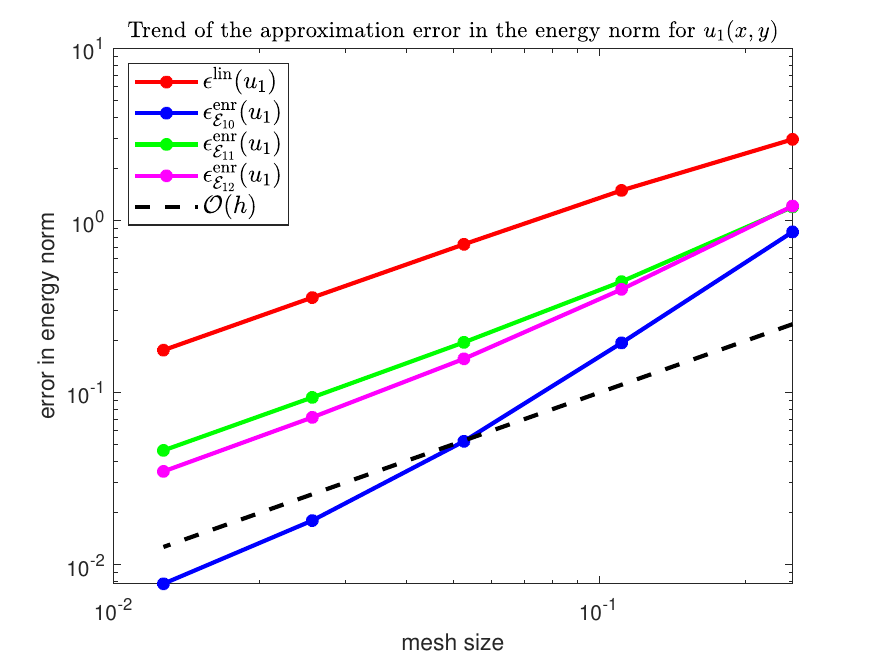} 
\includegraphics[width=0.49\textwidth]{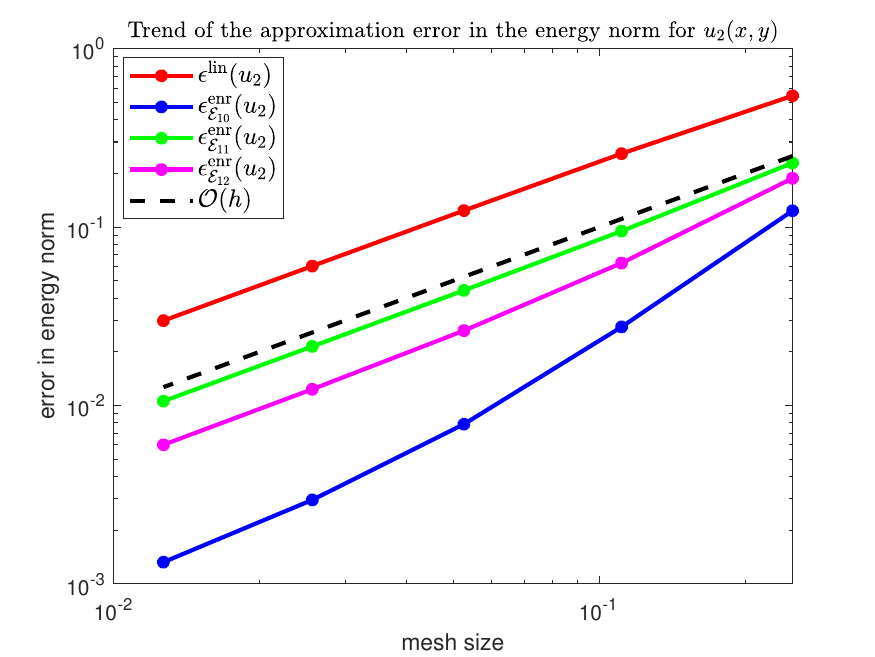} 
 \caption{Trend of the error in the energy norm when approximating the functions $u_1(\mathbf{x})$ (left) and $u_2(\mathbf{x})$ (right) using the linear lagrangian finite element (red) and the enriched finite elements based on the sets of admissible enrichment functions  $\mathcal{E}_{10}$ (blue), $\mathcal{E}_{11}$ (green) and $\mathcal{E}_{12}$ (magenta) with $\mu=\alpha=\beta=0$.}
 \label{Fig1}
\end{figure}

\begin{figure}
  \centering
\includegraphics[width=0.49\textwidth]{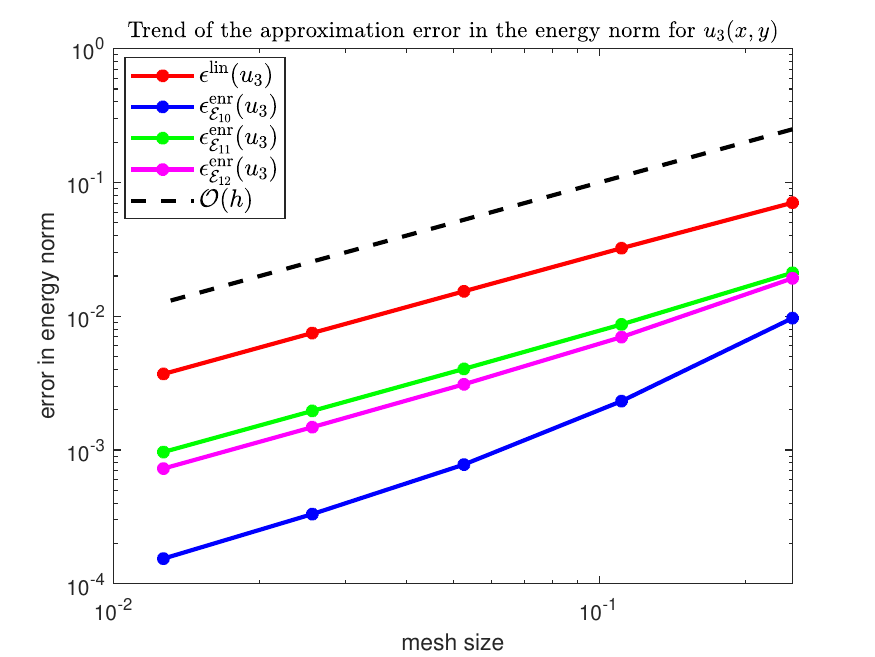} 
\includegraphics[width=0.49\textwidth]{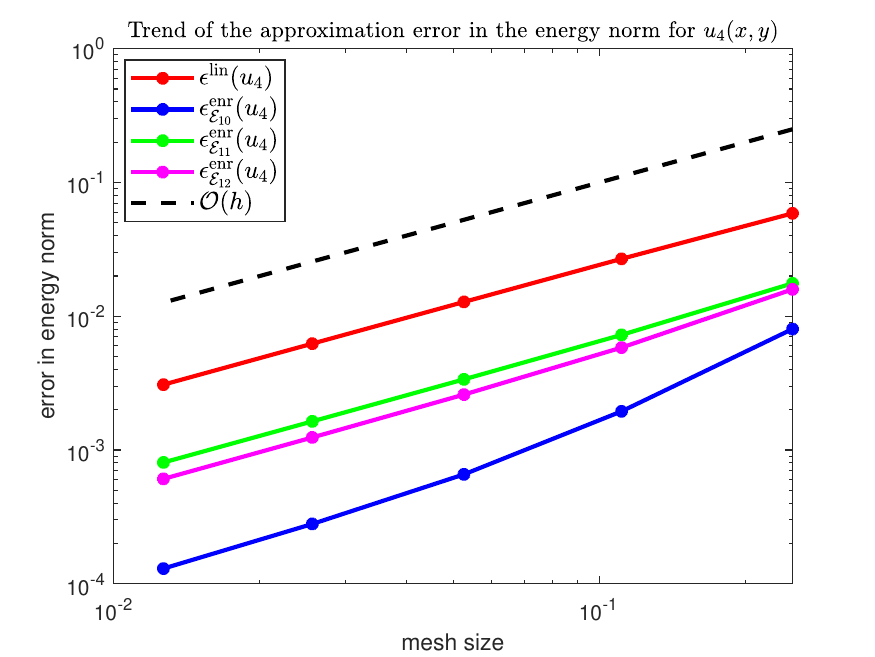} 
 \caption{Trend of the error in the energy norm when approximating the functions $u_3(\mathbf{x})$ (left) and $u_4(\mathbf{x})$ (right) using the linear lagrangian finite element (red) and the enriched finite elements based on the sets of admissible enrichment functions  $\mathcal{E}_{10}$ (blue), $\mathcal{E}_{11}$ (green) and $\mathcal{E}_{12}$ (magenta) with $\mu=\alpha=\beta=0$.}
 \label{Fig2}
\end{figure}

\begin{figure}
  \centering
\includegraphics[width=0.49\textwidth]{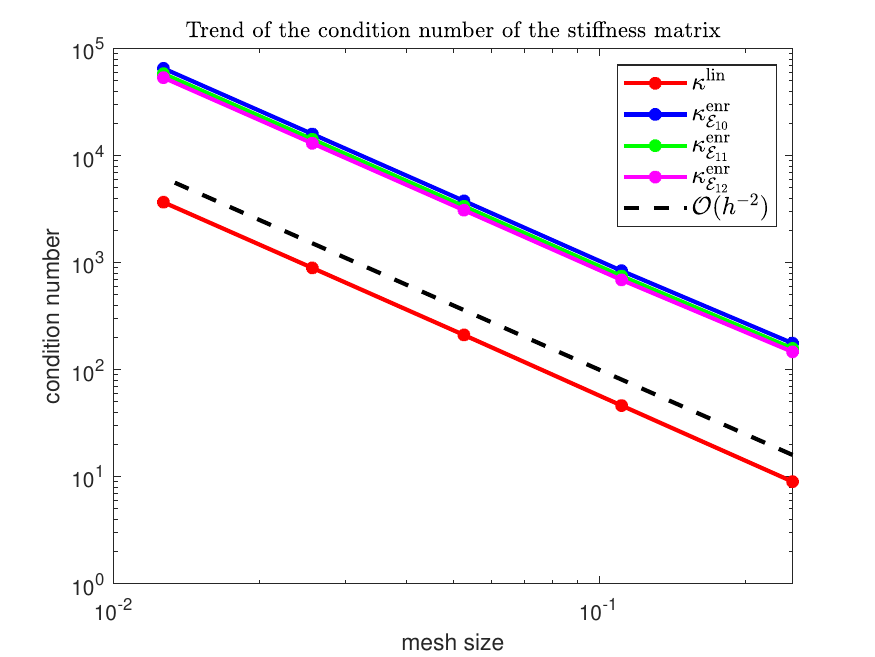} 
 \caption{Trend of the condition number of the stiffness matrix for the linear lagrangian finite element (red) and for the enriched finite elements based on the sets of admissible enrichment functions  $\mathcal{E}_{10}$ (blue), $\mathcal{E}_{11}$ (green) and $\mathcal{E}_{12}$ (magenta) with $\mu=\alpha=\beta=0$.}
 \label{condplot}
\end{figure}

\begin{figure}
  \centering
\includegraphics[width=0.49\textwidth]{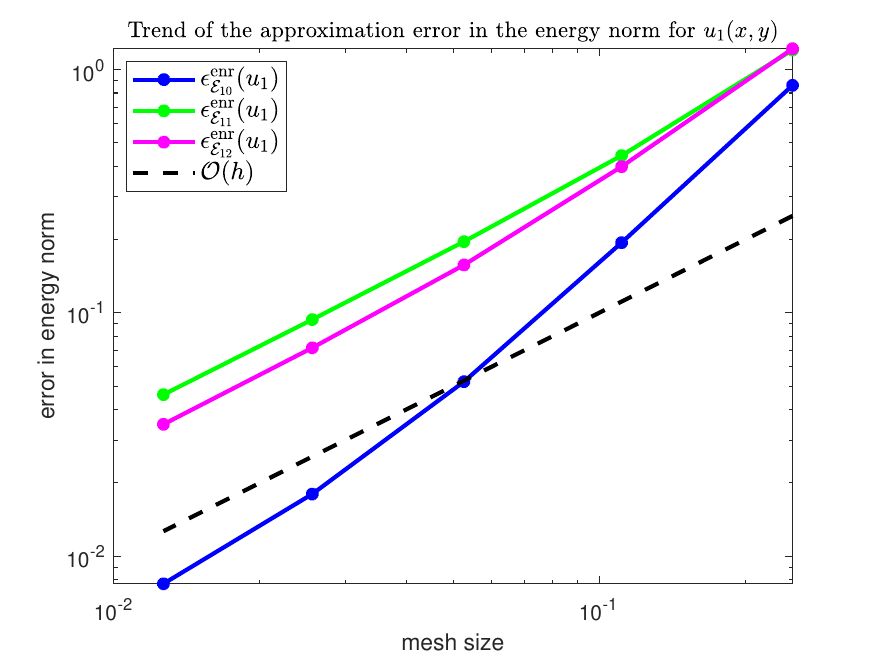} 
\includegraphics[width=0.49\textwidth]{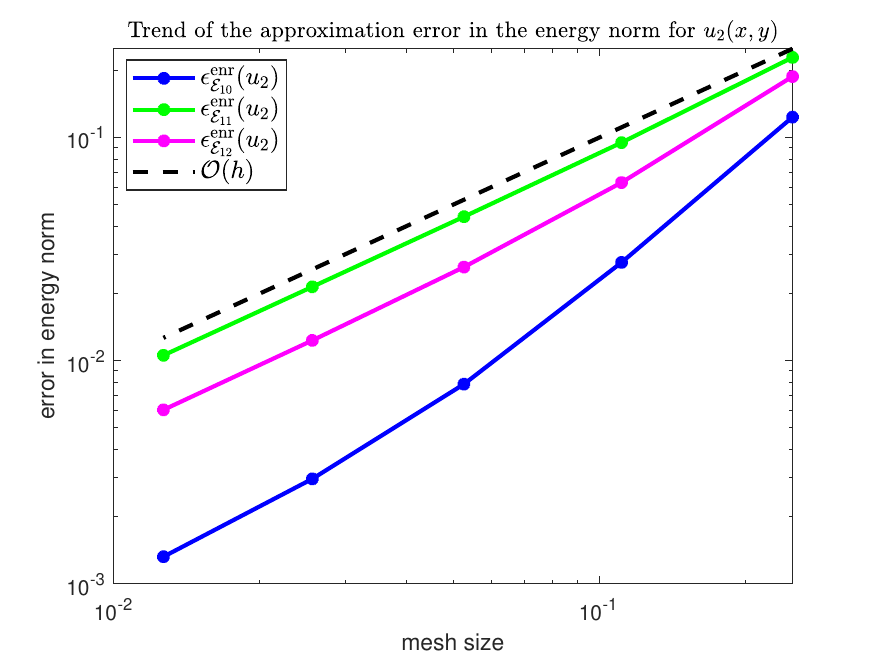} 
 \caption{Trend of the error in the energy norm when approximating the functions $u_1(\mathbf{x})$ (left) and $u_2(\mathbf{x})$ (right) using the enriched finite elements based on the sets of admissible weighted enrichment functions  $\mathcal{E}_{10}$ (blue), $\mathcal{E}_{11}$ (green) and $\mathcal{E}_{12}$ (magenta) with $\mu=\beta=0$ and $\alpha=1$.}
 \label{Fig1w}
\end{figure}

\begin{figure}
  \centering
\includegraphics[width=0.49\textwidth]{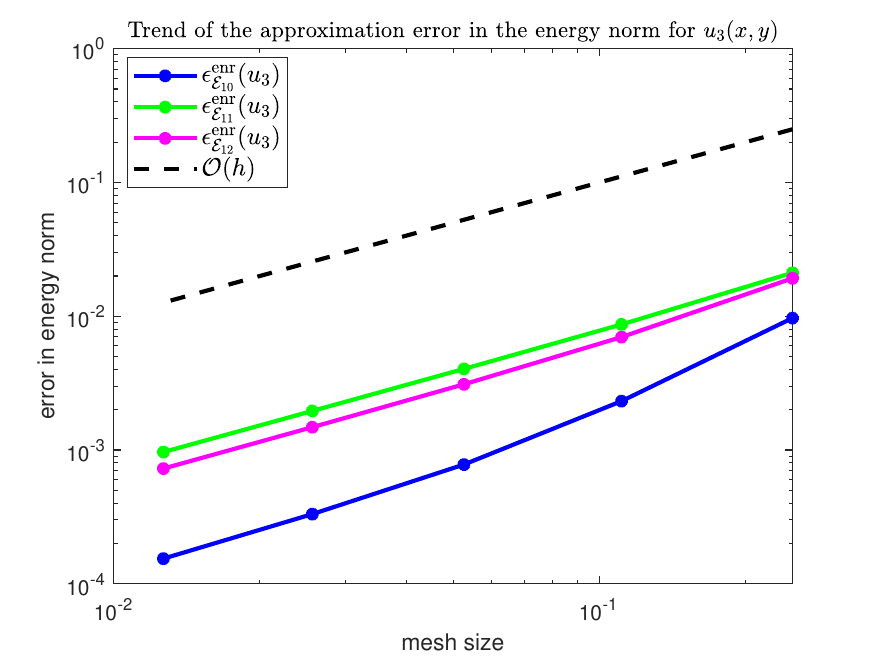} 
\includegraphics[width=0.49\textwidth]{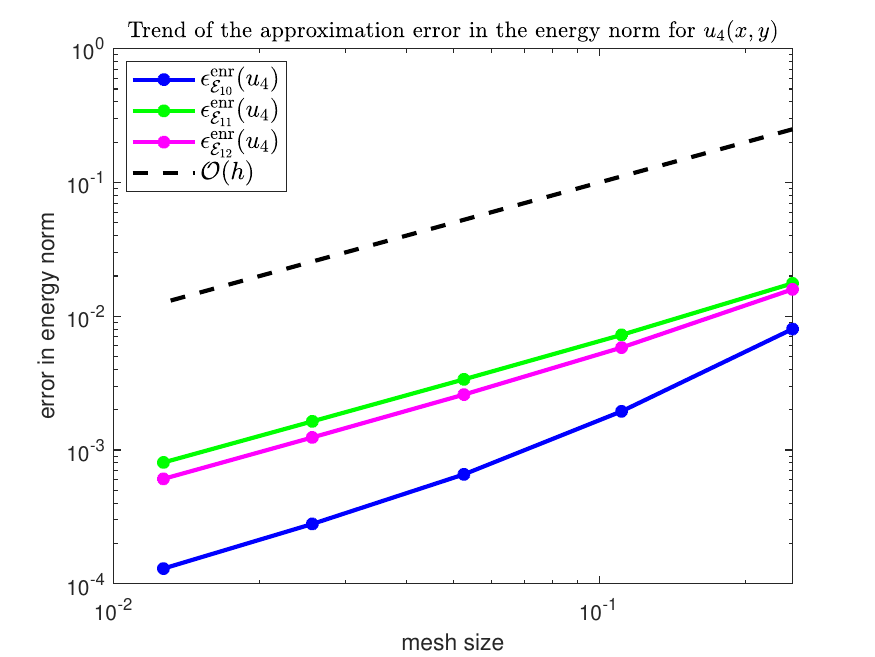} 
 \caption{Trend of the error in the energy norm when approximating the functions $u_3(\mathbf{x})$ (left) and $u_4(\mathbf{x})$ (right) using the enriched finite elements based on the sets of admissible weighted enrichment functions  $\mathcal{E}_{10}$ (blue), $\mathcal{E}_{11}$ (green) and $\mathcal{E}_{12}$ (magenta) with $\mu=\beta=0$ and $\alpha=1$.}
 \label{Fig2w}
\end{figure}

\section{Conclusions and Future Works}
In this paper, we proposed weighted enrichment strategies to enhance the accuracy of the standard
linear finite element method for solving the Poisson problem with Dirichlet boundary conditions. Specifically, we introduced new three-parameter families of weighted enrichment functions and, for one of these families, we established an explicit error bound in $L^2$-norm. Numerical experiments confirmed that the proposed approach significantly enhances the approximation accuracy compared to the linear lagrangian finite element.

\noindent
As future work, we plan to extend this enrichment strategy to the numerical solution of other classes of PDEs, such as those arising in fluid dynamics, elasticity, and wave propagation.

\section*{Acknowledgments}
\noindent
This research has been achieved as part of RITA \textquotedblleft Research
 ITalian network on Approximation'' and as part of the UMI group \enquote{Teoria dell'Approssimazione
 e Applicazioni}. The research was supported by GNCS-INdAM 2025 project \emph{``Polinomi, Splines e Funzioni Kernel: dall'Approssimazione Numerica al Software Open-Source''} and by GNCS-INdAM 2025 project \emph{``High-order BEM based numerical techniques for wave propagation problems''} (CUP E53C24001950001).

\section*{Funding}
\noindent
Project funded by the EuropeanUnion – NextGenerationEU under the National Recovery and Resilience Plan (NRRP), Mission 4 Component 2 Investment 1.1 - Call PRIN 2022 No. 104 of February 2, 2022 of Italian Ministry of University and Research; Project 2022FHCNY3 (subject area: PE - Physical Sciences and Engineering) \enquote{Computational mEthods for Medical Imaging (CEMI)}. \\

\noindent
Luca Desiderio would like to express his gratitude to the financial support provided by the European Union - Next GenerationEU, in the framework of the project ``Strategie HPC e modelli fisico-numerici per la previsione di eventi meteorologici estremi'' (HPC-XTREME) from the National Recovery and Resilience Plan, Mission 4  ``Istruzione e ricerca'' Component 2 ``Dalla ricercar all'impresa'' - Investment 1.4 - NATIONAL CENTER FOR HPC, BIG DATA AND QUANTUM COMPUTING (Project Code CN00000013 - CUP B83C22002830001). The views and opinions expressed are solely those of the authors and do not necessarily reflect those of the European Union, nor can the European Union be held responsible for them.

\bibliographystyle{elsarticle-num}
\bibliography{bibliografia.bib}

\end{document}